\theoremstyle{plain}
 \newtheorem{theorem}{Theorem}
 \newtheorem{thm}{Theorem}
 \newtheorem{lemma}{Lemma}
 \newtheorem{proposition}{Proposition}
\theoremstyle{definition}
\theoremstyle{remark}
\numberwithin{equation}{section}
\begin{document}

\title{Asymptotic Expansion of the One-Loop Approximation \\
of the Chern-Simons Integral \\
in an Abstract Wiener Space Setting}
\author{Itaru Mitoma \and Seiki Nishikawa$^{*}$}
\date{\empty}
\maketitle

\renewcommand{\thefootnote}{\fnsymbol{footnote}}
\footnote[0]{
2000 \textit{Mathematics Subject Classification.} 
Primary 57R56; Secondary 28C20, 57M27.}
\footnote[0]{
\textit{Key words and phrases.}
Chern-Simons integral, one-loop approximation, 
asymptotic expansion, abstract Wiener space, 
stochastic holonomy, stochastic Wilson line.
\endgraf
$^{*}$ Partly supported by the Grant-in-Aid for 
Scientific Research (A) of the Japan Society for 
the Promotion of Science, No.\ 15204003. 
}

\begin{abstract}\noindent
In an abstract Wiener space setting, we construct
a rigorous mathematical model of the one-loop 
approximation of the perturbative Chern-Simons integral,
and derive its explicit asymptotic expansion
for stochastic Wilson lines. 
\end{abstract}

\tableofcontents

\section{Introduction}
\label{sect:1}
Since the pioneering work of Witten~\cite{Witten} in 1989, 
a multitude of people studied on the relationship between 
the {\em Chern-Simons integral}, a formal path integration over 
an infinite-dimensional space of connections, and {\em quantum 
invariants}, new topological invariants of three-manifolds 
and knots (see, for instance, Atiyah~\cite{Atiyah} and 
Ohtsuki~\cite{Ohtsuki} for overviews of recent developments
in this area).
Amongst others, a rigorous mathematical model of 
the perturbative Chern-Simons integral 
was constructed by Albeverio and his colleagues;
first in the Abelian case as a Fresnel 
integral~\cite{Albeverio-Schaefer}, 
and then for the non-Abelian case within the framework of 
white noise distribution~\cite{Albeverio-Sengupta}.

Recently, an explicit representation of stochastic oscillatory 
integrals with quadratic phase functions and 
the formula of changing variables, based on a method of 
computation of probability via
``deformation of the contour integration'', 
have been established on {\em abstract Wiener spaces}
by Malliavin and Taniguchi~\cite{Malliavin-Taniguchi}.
Motivated by these antecedent results, 
the first-named author studied the Chern-Simons integral, 
in~\cite{Mitoma1, Mitoma2}, from the standpoint of 
infinite dimensional stochastic analysis. 

The main objective of this paper is, 
based on the work of Bar-Natan and 
Witten~\cite{Bar-Natan-Witten} and 
the mathematical formulation of the Feynman integral
due to It\^{o}~\cite{Ito},  
to construct, in an abstract Wiener space setting, 
a rigorous mathematical model of the one-loop 
approximation of the perturbative Chern-Simons 
integral of Wilson lines, 
and derive its explicit asymptotic expansion.

To state our result succinctly,
let $M$ be a compact oriented smooth three-manifold,
and consider a (trivial) principal $G$-bundle $P$ over $M$
with a simply connected, connected compact simple 
gauge group $G$ with Lie algebra $\mathfrak{g}$.
We denote by $\Omega^{r}(M, \mathfrak{g})$ the space
of $\mathfrak{g}$-valued smooth $r$-forms on $M$ 
equipped with the canonical inner product $(\,\ , \ )$,
and identify a connection on $P$ with a 
$\mathfrak{g}$-valued $1$-form 
$A \in \Omega^{1}(M, \mathfrak{g})$.
Let
\[
 Q_{A_{0}} = (*d_{A_{0}} + d_{A_{0}}*)J
\]
be a twisted Dirac operator acting on
$\Omega^{r}(M, \mathfrak{g})$, 
where $\ast$ is the Hodge $\ast$-operator defined by 
a Riemannian metric chosen on $M$, 
$d_{A_{0}}$ is the covariant exterior differentiation 
defined by a flat connection $A_{0}$ on $P$,
and $J$ is an operator defined to be 
$J\varphi = - \varphi$ if $\varphi$ is a $0$-form or 
a $3$-form, and
$J\varphi = \varphi$ if $\varphi$ is a $1$-form or
a $2$-form.
For a sufficiently large integer $p$, we define
the Hilbert subspace $H_{p}(\Omega_{+})$ of
$L^{2}(\Omega_{+}) = L^{2}\left(\Omega^{1}
(M, \mathfrak{g}) \oplus \Omega^{3}(M, \mathfrak{g})
\right)$
with new inner product $(\,\ , \ )_{p}$ defined by
\[
 \big( (A, \phi), \, (B, \varphi) \big)_{p}
   = \left(A, \, \left(I + Q_{A_{0}}^{2}\right)^{p}B\right)
    + \left(\phi, \, \left(I + Q_{A_{0}}^{2}\right)^{p}
     \varphi\right),
\]
where $I$ is the identity operator on $L^{2}(\Omega_{+})$.

Now, let $H = H_{p}(\Omega_{+})$ and $(B, H, \mu)$ be
an abstract Wiener space (see Section~\ref{sect:3} 
for the precise definition).
Let $\lambda_{i}$ and $e_{i}$, $i = 1,2,\dots$,
denote the eigenvalues and eigenvectors of the
self-adjoint elliptic operator $Q_{A_{0}}$, and 
$h_{i} = \left(1 + \lambda_{i}^{2}\right)^{-p/2}e_{i}$ 
be the corresponding CONS of $H$, respectively.
Choosing a sufficiently large $p$ satisfying
the condition 
\[
 \sum_{i=1}^{\infty}
  \left(1 + \lambda_{i}^{2}\right)^{-p} |\lambda_{i}|
   < \infty,
\]
we define the normalized one-loop approximation of the 
Lorentz gauge-fixed Chern-Simons integral of the 
$\epsilon$-regularized Wilson line $F^{\epsilon}_{A_{0}}(x)$, 
defined in Section \ref{sect:4}, to be
\begin{equation}\label{eqn:0.1}
 I_{CS}\big( F^{\epsilon}_{A_{0}} \big)
  = \limsup_{n \to \infty} \frac{1}{Z_n}\int_{B}
   F_{A_{0}}^{\epsilon} \big( \sqrt{n}x \big)
    e^{\sqrt{-1}k CS(\sqrt{n}x)} \mu(dx), 
\end{equation}
where
\begin{equation*}
 Z_n = \int_{B} \,e^{\sqrt{-1}kCS(\sqrt{n}x)}
  \mu (dx), \quad
   CS(x) = \sum_{i=1}^{\infty}
    \left(1 + \lambda_{i}^{2}\right)^{-p} \lambda_{i}
     \langle x, h_{i} \rangle^{2},
\end{equation*}
and $\langle\ \,,\ \rangle$ denotes the natural pairing 
of $B$ and its dual space $B^{\ast}$.

Then we obtain the following expansion theorem.
\medskip

\begin{thm}\ 
For any fixed $\epsilon > 0$ and positive
integer $N$,
\begin{equation}\label{eqn:0.2}
 I_{CS}(F^{\epsilon}_{A_{0}})
  = \int_{B} F^{\epsilon}_{A_{0}}\big(R_{k}x\big)
   \mu (dx) = \sum_{m<N} k^{-m/2} \cdot 
    J^{\epsilon,m}_{CS} + O\big(k^{-N/2}\big),
\end{equation}
where
\begin{equation*}
 J^{\epsilon,m}_{CS} 
  = k^{m/2} \cdot
   \int_{B} F^{\epsilon,m}_{A_{0}}\big(R_{k}x\big)
    \mu(dx), \quad
     R_{k} = \left\{-2\sqrt{-1}k
      \big(I + Q_{A_{0}}^{2} \big)^{-p}
       Q_{A_{0}} \right\}^{-1/2},
\end{equation*}
and $F^{\epsilon,m}_{A_{0}}(x)$ is defined by 
\eqref{eqn:3.2}.
\end{thm}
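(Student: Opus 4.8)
The plan is to establish the two equalities in \eqref{eqn:0.2} separately: the first is the evaluation of a quadratic-phase stochastic oscillatory integral via a ``deformation of the contour'', and the second is an asymptotic Taylor expansion in the coupling $k^{-1/2}$. Throughout I would write $c_i = (1+\lambda_i^2)^{-p}\lambda_i$ for the eigenvalues of $C = (I + Q_{A_0}^2)^{-p}Q_{A_0}$ on the CONS $\{h_i\}$, so that $CS(x) = \sum_i c_i\langle x, h_i\rangle^2$ and, by homogeneity, $CS(\sqrt{n}x) = n\,CS(x)$. Since the coordinates $\xi_i = \langle x, h_i\rangle$ are i.i.d.\ standard Gaussians under $\mu$, a mode-by-mode Gaussian computation gives $Z_n = \prod_i(1 - 2\sqrt{-1}kn\,c_i)^{-1/2} = \det(I - 2\sqrt{-1}kn\,C)^{-1/2}$, a Fredholm determinant that is well defined precisely because the standing hypothesis $\sum_i|c_i| < \infty$ makes $C$ trace class.

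For the first equality I would follow the explicit representation of quadratic-phase stochastic oscillatory integrals of Malliavin--Taniguchi. After the substitution $\eta = \sqrt{n}x$, each mode contributes a factor whose numerator carries the Gaussian damping $e^{-\eta_i^2/(2n)}$ and whose denominator is the corresponding factor of $Z_n$; letting $n \to \infty$ removes the damping and cancels the normalization, leaving on the $i$-th mode the complex rotation $\xi_i \mapsto (-2\sqrt{-1}k\,c_i)^{-1/2}\xi_i$, that is, the action of $R_k$. Carried out simultaneously over all modes and justified by the trace-class condition together with the integrability of $F^{\epsilon}_{A_0}$ established in Section~\ref{sect:4}, this identifies the $\limsup$ in \eqref{eqn:0.1} with the rotated Gaussian integral $\int_B F^{\epsilon}_{A_0}(R_k x)\,\mu(dx)$. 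The key analytic point is the legitimacy of the contour deformation for the complex operator $R_k$ and the interchange of the limit with the integral.

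For the second equality I would exploit that $R_k = k^{-1/2}S$ with $S = \{-2\sqrt{-1}\,C\}^{-1/2}$ independent of $k$. Taylor expanding the $\epsilon$-regularized Wilson line, $F^{\epsilon}_{A_0}(R_k x) = \sum_{m<N} F^{\epsilon,m}_{A_0}(R_k x) + \mathcal{R}_N(R_k x)$, where $F^{\epsilon,m}_{A_0}$ is the degree-$m$ homogeneous term from \eqref{eqn:3.2}, homogeneity gives $F^{\epsilon,m}_{A_0}(R_k x) = k^{-m/2}F^{\epsilon,m}_{A_0}(Sx)$, so that $J^{\epsilon,m}_{CS} = k^{m/2}\int_B F^{\epsilon,m}_{A_0}(R_k x)\,\mu(dx) = \int_B F^{\epsilon,m}_{A_0}(Sx)\,\mu(dx)$ is indeed independent of $k$ and $k^{-m/2}J^{\epsilon,m}_{CS} = \int_B F^{\epsilon,m}_{A_0}(R_k x)\,\mu(dx)$. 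Integrating the expansion term by term then reduces the claim to the remainder estimate $\int_B \mathcal{R}_N(R_k x)\,\mu(dx) = O(k^{-N/2})$, which I would obtain from Taylor's theorem by bounding the $N$-th derivative of $F^{\epsilon}_{A_0}$ (smooth and controlled for fixed $\epsilon > 0$) against the Gaussian moments of $\|R_k x\|$, the factor $k^{-N/2}$ emerging from the $N$-fold occurrence of $R_k$.

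I expect the main obstacle to be the rigorous justification of the contour deformation in the first equality: one must make sense of $F^{\epsilon}_{A_0}(R_k x)$ as a $\mu$-integrable functional even though the diagonal entries $(2k|c_i|)^{-1/2}$ of $R_k$ grow unboundedly in $i$, and must control the passage $n \to \infty$ uniformly. Both hinge on the fine analytic properties of the $\epsilon$-regularized stochastic holonomy---its smoothness, the decay of its derivatives, and its integrability---established in Sections~\ref{sect:3} and~\ref{sect:4}, together with the trace-class bound $\sum_i|c_i| < \infty$ that guarantees convergence of the Fredholm determinants throughout.
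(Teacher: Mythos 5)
Your outline reproduces the paper's strategy in skeleton: the paper likewise computes $Z_{n}$ by the one-dimensional Fresnel formula as an inverse square root of a Fredholm determinant, verifies the analyticity of $F^{\epsilon}_{A_{0}}(\sqrt{n}x)$ so that Malliavin--Taniguchi [Theorem 7.8] applies, decomposes the holonomy into the homogeneous pieces $Z^{\epsilon}_{\gamma}(i)$ underlying \eqref{eqn:3.2}, and controls the tail $\sum_{m\ge N}$ by a per-factor gain of $k^{-1/2}$ (Lemma~\ref{lem:4}). One refinement of yours is genuinely nice: since $R_{k}=k^{-1/2}S$ with $S$ independent of $k$ and $F^{\epsilon,m}_{A_{0}}$ is homogeneous of degree $m$, your observation that $J^{\epsilon,m}_{CS}$ is literally $k$-independent sharpens what the paper leaves implicit. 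But note a structural difference: the paper applies the Malliavin--Taniguchi rotation at each \emph{fixed} $n$, producing exactly $\int_{B}F^{\epsilon}_{A_{0}}(R_{n,k}x)\,\mu(dx)$ with $R_{n,k}=\{I-2\sqrt{-1}nk(I+Q_{A_{0}}^{2})^{-p}Q_{A_{0}}\}^{-1/2}\sqrt{n}$, and only then passes $n\to\infty$; your version, which cancels damping and normalization mode by mode ``in the limit,'' fuses two steps that must be kept separate, because the $n\to\infty$ passage is not an interchange of limit and integral for a single integrand but a convergence of \emph{different} integrands, proved in the paper via moment bounds uniform in $n$ and $L^{2q}$ convergence $W^{\epsilon}_{\gamma}(R_{n,k}x)\to W^{\epsilon}_{\gamma}(R_{k}x)$ (Steps 2 and 4).

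The genuine gap is the one you name but misattribute: you defer the well-definedness of $F^{\epsilon}_{A_{0}}(R_{k}x)$ to ``the fine analytic properties established in Sections~\ref{sect:3} and~\ref{sect:4},'' but those sections only treat the pairing of $x\in B$ with $\tilde{C}^{\epsilon}_{\gamma}(t)\in B^{\ast}$, whereas $R_{k}\tilde{C}^{\epsilon}_{\gamma}(t)$ lies in $H$ and \emph{not} in $B^{\ast}$ (precisely because of the unbounded diagonal growth you note). Consequently $\langle x,R_{k}\tilde{C}^{\epsilon}_{\gamma}(t)^{\alpha}\otimes E_{\alpha}\rangle$ exists only as the $L^{2}(\mu)$-extension \eqref{eqn:5.14}, and the pathwise Lipschitz bound \eqref{eqn:2.10a} that made the iterated Stieltjes integrals of Section~\ref{sect:3} meaningful is lost. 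The paper's Step 3 supplies the missing construction: the increment estimate \eqref{eqn:5.12}, the Kolmogorov--Delporte criterion to obtain a continuous modification of $t\mapsto R_{k}x^{\epsilon,\alpha}_{\gamma}(t)$, and an almost-sure bounded-variation argument, without which $W^{\epsilon}_{\gamma}(R_{k}x)$ --- the integrand of both your equalities --- has no meaning. Relatedly, your remainder estimate ``against the Gaussian moments of $\Vert R_{k}x\Vert$'' cannot be run as stated, since $R_{k}x$ is not a $B$- or $H$-valued random variable; the $k^{-1/2}$ per occurrence must instead be extracted from increments via \eqref{eqn:5.12}, namely $E\big[|R^{\nu}_{k}x^{\epsilon,\alpha}_{\gamma}(t)-R^{\nu}_{k}x^{\epsilon,\alpha}_{\gamma}(s)|^{2}\big]\le (2k\rho)^{-1}c_{1}(\epsilon)^{2}|t-s|^{2}$ with $\rho=\min_{j}|\lambda_{j}|>0$ (positivity of which uses the acyclicity assumption \eqref{eqn:1.4}), combined with Wick's formula (Lemma~\ref{lem:5}) and the combinatorial control $\{(2qm)!/(2^{qm}(qm)!)\}^{1/2q}$ absorbed by $1/r!$ to keep the sum over the order $r$ of the iterated integrals convergent --- an estimate, not a soft smoothness-plus-moments bound.
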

\bigskip

The organization of this paper is as follows.
In Section \ref{sect:2}, we recall relevant basic materials
and definitions regarding the one-loop approximation of 
the perturbative Chern-Simons integral.
Then, in Section \ref{sect:3}, we define the notion
of a stochastic holonomy, and in Section \ref{sect:4},
that of a stochastic Wilson line, 
which is realized as an $HC^{\infty}$-map on an 
abstract Wiener space. 
Section \ref{sect:5} is devoted to a rigorous mathematical 
model of the normalized one-loop approximation
of the Lorentz gauge-fixed Chern-Simons integral, 
which leads to \eqref{eqn:0.1} defined in an abstract 
Wiener space setting.
Working out this, we then prove our main result,
the expansion formula \eqref{eqn:0.2}.
In Section \ref{sect:6}, as an example, we derive  
linking numbers of loops from our expansion formula
for the $\epsilon$-regularized Wilson line.

Throughout the paper, $\sqrt{z}$ is understood 
to denote the branch for which 
${- \pi}/{2} < \arg \sqrt{z} < {\pi}/{2}$.

\section{One-loop approximation}
\label{sect:2}
Let $M$ be a compact oriented smooth three-manifold, 
$G$ a simply connected, connected compact simple Lie group,
and $P \to M$ a principal $G$-bundle over $M$.
Since $G$ is simply connected, 
$P$ is a trivial bundle by topological reason,
so that, with a choice of a trivialization of $P$,
we may identify the space of smooth $r$-forms taking values
in the associated adjoint bundle 
$P \times_{\mathrm{Ad}} \mathfrak{g}$
with $\Omega^{r}(M, \mathfrak{g})$, 
the space of $\mathfrak{g}$-valued smooth $r$-forms on $M$.

Let $\mathcal{A}$ denote the space of connections on $P$ 
and $\mathcal{G}$ the group of gauge transformations on $P$.
Note that, by fixing a reference connection on $P$ 
as the origin,
we may identify $\mathcal{A}$ with the (infinite-dimensional)
vector space $\Omega^{1}(M, \mathfrak{g})$,
and $\mathcal{G}$ with the space $C^{\infty}(M, G)$ 
of smooth maps from $M$ to $G$, respectively.
Then the {\em Chern-Simons integral} of an integrand $F(A)$ 
is given by
\begin{equation}\label{eqn:1.1}
 \int_{\mathcal{A}/\mathcal{G}}
  F(A)\,e^{L(A)}\,\mathcal{D}(A),
\end{equation}
where the Lagrangian $L$ is defined by
\begin{equation}\label{eqn:1.2}
 L(A) = - \frac{\sqrt{-1}k}{4\pi}
  \int_{M} \operatorname{Tr} \Big\{
   A\wedge dA + \frac{2}{3}A \wedge A \wedge A
    \Big\}.
\end{equation}
Here $\mathcal{D}(A)$ is the {\em Feynman measure} 
integrating over all gauge orbits, that is, 
over the space $\mathcal{A}/\mathcal{G}$ of equivalence 
classes of connections modulo gauge transformations,
$\operatorname{Tr}$ denotes the trace in the adjoint 
representation of the Lie algebra ${\mathfrak g}$, 
that is, a multiple of the Killing form of $\mathfrak{g}$, 
normalized so that the pairing 
$(X,Y) = - \operatorname{Tr} XY$ on ${\mathfrak g}$ is 
the basic inner product, 
and the parameter $k$ is a positive integer called 
the {\em level of charges}.

Among various integrands, 
the most typical example of gauge invariant observables 
is the {\em Wilson line} defined by
\begin{equation}\label{eqn:1.3}
 F(A) = \prod_{j=1}^{s} \,
  \operatorname{Tr}_{R_j} \mathcal{P}
   \exp \int_{\gamma_{j}} A,
\end{equation}
where $\mathcal{P}$ denotes the product integral 
(see~\cite{Dollard-Friedman}, or equivalently~\cite{Chen}),
$\gamma_{j}$, $j = 1, 2, \dots, s$, 
are closed oriented loops, and the trace 
$\operatorname{Tr}$ is taken with respect to
some irreducible representation $R_j$ of $G$ 
assigned to each $\gamma_j$.
It should be noted that the term 
$\mathcal{P} \exp \int_{\gamma_{j}} A$ 
in \eqref{eqn:1.3} gives rise to the holonomy of $A$ 
around $\gamma_{j}$,
which is defined to be a solution of the parallel transport 
equation with respect to $A$ along $\gamma_{j}$.
From the standpoint of infinite dimensional stochastic analysis, 
we need to regularize the Wilson line \eqref{eqn:1.3},
in a manner similar to that in 
Albeverio and Sch\"{a}fer~\cite{Albeverio-Schaefer}, 
to obtain its $\epsilon$-regularization $F^{\epsilon}(A)$
(see Section \ref{sect:3}).
  
We now recall the perturbative formulation of the 
Chern-Simons integral~\cite{Axelrod-Singer, Bar-Natan-Witten}
and adopt the method of superfields in the following manner. 
Let $A_{0}$ be a critical point of the Lagrangian $L$ 
such that
\begin{equation*}
 dA_0 + A_0 \wedge A_0 = 0,
\end{equation*}
that is, $A_0$ is a flat connection.
For simplicity, we assume as in 
\cite{Axelrod-Singer, Bar-Natan-Witten} that $A_0$ is 
isolated up to gauge transformations and that 
the group of gauge transformations fixing $A_0$ is 
discrete, or {\em equivalently} the cohomology 
$H^{\ast}(M, d_{A_{0}})$
of $d_{A_{0}}$ vanishes, that is,
\begin{equation}\label{eqn:1.4}
 H^1(M, d_{A_{0}}) = \{0\}, \quad
  H^0(M, d_{A_0}) = \{0\},
\end{equation}
where $d_{A_{0}}$ is the covariant exterior differentialtion
acting on $\Omega^{r}(M, {\mathfrak g})$, defined by
\begin{equation*}
 d_{A_{0}} = d + [A_{0}, \ \cdot\ ].
\end{equation*}
Here the bracket $[A, B]$ of
$A = \sum A^{\alpha} \otimes E_{\alpha}
\in \Omega^{r_{1}}(M, \mathfrak{g})$ and 
$B = \sum B^{\beta} \otimes E_{\beta} 
\in \Omega^{r_{2}}(M, {\mathfrak g})$ is defined to be
\begin{equation*}
 [A, B] = \sum_{\alpha,\,\beta} \, A^{\alpha}
  \wedge B^{\beta} \otimes [E_{\alpha}, E_{\beta}]
   \in \Omega^{r_{1}+r_{2}}(M, \mathfrak{g}),
\end{equation*}
where $\{E_{\alpha}\}$ is a basis of the Lie algebra
${\mathfrak g}$. 

Then, for the standard gauge fixing, 
following~\cite{Axelrod-Singer, Bar-Natan-Witten},
we introduce a Bosonic $3$-form $\phi$,
a Fermionic $0$-form $c$, 
a Fermionic $3$-form $\hat c$,
which are $\mathfrak{g}$-valued smooth forms
on $M$, and the BRS operator $\delta$.
The BRS operator $\delta$ is defined by the laws
\begin{gather*}
 \delta A = - D_{A}c, \quad
  \delta c = \frac{1}{2}[c, c], \quad
   \delta \hat{c} = \sqrt{-1}\phi, \quad
    \delta \phi = 0,
\end{gather*}
where $D_{A} = d_{A_{0}} + [A, \ \cdot\ ]$.
In order to define the Lorentz gauge condition,
we now choose a Riemannian metric $g$ on $M$ 
and denote by
$\ast : \Omega^{r}(M, \mathfrak{g}) \to 
\Omega^{3-r}(M, \mathfrak{g})$
the Hodge $\ast$-operator defined by $g$,
which satisfies $\ast^{2} = \mbox{id}$.
Then the {\em Lorentz gauge condition} is given by
\begin{equation}\label{eqn:1.11}
(d_{A_{0}})^* A = 0,
\end{equation}
where $(d_{A_{0}})^* = (-1)^{r} \ast d_{A_0} \ast$
denotes the adjoint operator of $d_{A_{0}}$.
We set
\begin{equation*}
 V(A) = \frac{k}{2\pi}
  \int_{M}\operatorname{Tr} \left(
   \hat{c} \ast d_{A_{0}}\ast A \right),
\end{equation*}
and define the gauge-fixed Lagrangian of
\eqref{eqn:1.2} by
\begin{equation*}
 L(A_0 + A) - \delta V(A),
\end{equation*}
where $\delta V(A)$ is given by
\begin{equation*}
 \delta V(A) = \frac{k}{2\pi}
  \int_{M} \operatorname{Tr} \left(
   \sqrt{-1}\phi \ast d_{A_{0}} \ast A
    - \hat{c} \ast d_{A_{0}} \ast D_{A}c
     \right).
\end{equation*}
Noting that around the critical point $A_{0}$ of $L$,
$L(A_{0} + A)$ is expanded as
\begin{equation*}
 L(A_{0} + A) = L(A_{0})
  - \frac{\sqrt{-1}k}{4\pi} \int_{M}
   \operatorname{Tr} \Big\{A \wedge d_{A_{0}}A
    + \frac{2}{3} A \wedge A \wedge A \Big\},
\end{equation*}
this leads to the gauge-fixed Chern-Simons integral
written as
\begin{equation}\label{eqn:1.5}
 \begin{split}
  \int_{\mathcal{A}} & \int_{\Phi}
   \int_{\widehat{\mathcal{C}}} \int_{\mathcal{C}}
    \mathcal{D}(A) \mathcal{D}(\phi) \mathcal{D}(\hat{c})
     \mathcal{D}(c) \,F(A_{0} + A)  \\[0.2cm]
  &  \times \exp \left[
   L(A_{0}) - \frac{\sqrt{-1}k}{4\pi}
    \int_{M} \operatorname{Tr} \Big\{
     A \wedge d_{A_{0}}A
      + \frac{2}{3} A \wedge A \wedge A \right.
       \\[0.1cm]
  &  \left. \phantom{\frac{\sqrt{-1}k}{4\pi}L}
   + 2 \,\phi \ast d_{A_{0}} \ast A
    + 2\sqrt{-1} \,\hat{c} \ast d_{A_{0}} \ast D_{A}c
     \Big\} \right].   
 \end{split}
\end{equation}

Geometrically, one can derive \eqref{eqn:1.5}
in the following way.
First recall that the tangent space 
$T_{A_{0}}\mathcal{A} \cong \Omega^{1}(M, \mathfrak{g})$
of the space of connections $\mathcal{A}$ at $A_{0}$ 
is decomposed as
\begin{equation*}
 T_{A_{0}}\mathcal{A} = \operatorname{Im}d_{A_{0}}
  \oplus \operatorname{Ker}\,(d_{A_{0}})^{*},
\end{equation*}
since for each $c \in \Omega^{0}(M, \mathfrak{g})$
we have
$({d}/{dt})|_{t=0} \,(\exp tc)^{*}A = d_{A}c$.
Thus the Lorentz gauge condition \eqref{eqn:1.11}
corresponds to the choice of the orthogonal
complement of the tangent space to the gauge orbit
through $A_{0}$.
Under the assumption \eqref{eqn:1.4} we may think
that the Lorentz gauge condition 
$(d_{A_{0}})^{*}A = 0$ has a unique solution 
on each gauge orbit of $\mathcal{G}$.
Then, denoting by $\det \mathcal{J}(A)$ the Jacobian 
of the transformation 
$\mathcal{G} \ni g \mapsto
(d_{A_{0}})^{*}\left(g^{*}(A_{0} + A)\right)
\in \Omega^{0}(M, \mathfrak{g})$
at the identity element of $\mathcal{G}$,
we obtain the following basic identity
for the Chern-Simons integral \eqref{eqn:1.1}:
\begin{equation}\label{eqn:1.6}
 \int_{\mathcal{A}/\mathcal{G}} F(A)\,e^{L(A)}
  \,\mathcal{D}(A)
   = \int_{\mathcal{A}} \mathcal{D}(A)
    \,F(A)\,e^{L(A)}
     \,\delta\left((d_{A_{0}})^{*}A\right)
      \det \mathcal{J}(A),
\end{equation}
where $\delta$ denotes the Dirac delta function.
Here it should be noted that the term 
$\delta\left((d_{A_{0}})^{*}A\right)$
can be read into the Lagrangian in the form
\begin{equation*}
 \int_{\Phi} \mathcal{D}(\phi)
  \,\exp\left[ -\sqrt{-1}
   \int_{M} \operatorname{Tr}
    \left\{(d_{A_{0}})^{*}A \cdot \phi \right\} \right],
\end{equation*}
and the term $\det \mathcal{J}(A)$ in the form
\begin{equation*}
 \int_{\widehat{\mathcal{C}}} \int_{\mathcal{C}}
  \mathcal{D}(\hat{c}) \mathcal{D}(c)
   \,\exp\left[ \int_{M} \operatorname{Tr} \left\{
    \hat{c} \cdot (d_{A_{0}})^{*} D_{A}c \right\}
     \right],
\end{equation*}
where $\hat{c}$ and $c$ should be understood as Grassmann
(anti-commuting) variables (cf.~\cite{Zinn-Justin}).
Encoding these contributions into \eqref{eqn:1.6},
and taking account of the fact that, when deriving
the identity \eqref{eqn:1.6}, the Lorentz gauge 
condition \eqref{eqn:1.11} may be replaced by
\begin{equation*}
 \kappa\, (d_{A_{0}})^* A = 0
\end{equation*}
for any non-zero constant $\kappa \in \boldsymbol{C}$,
we obtain \eqref{eqn:1.5},
by choosing $\kappa = - k/2\pi$.

Now, noticing that likewise one may simply substitute
$\delta\left(\kappa\, (d_{A_{0}})^{*}A\right)$ for 
$\delta\left((d_{A_{0}})^{*}A\right)$ in \eqref{eqn:1.6},
we set
\[ 
 A^{\prime} = \sqrt{1/{2\pi}}A, \ \ 
 \phi^{\prime} = \sqrt{1/{2\pi}}\phi\,  \quad
 \mathrm{and}  \quad
 c^{\prime} = \sqrt{k/{2\pi}}c, \ \ 
 \hat{c}^{\prime} = \ast \sqrt{k/{2\pi}} \hat{c}
\]
in \eqref{eqn:1.5}, and collect the terms that are 
at most second order in 
$A^{\prime}, \phi^{\prime}, c^{\prime}$ and 
$\hat{c}^{\prime}$.
In the result, we obtain the following Lorentz gauge-fixed 
path integral form of the {\em one-loop approximation} of 
the Chern-Simons integral, 
written in variables $c^{\prime}, \hat{c}^{\prime}$
and $(A^{\prime},\, \phi^{\prime} )$:
\begin{equation}\label{eqn:1.7}
 \begin{split}
  \int_{\mathcal{A}^{\prime}} & \int_{{\Phi}^{\prime}}
   \int_{\widehat{\mathcal{C}}^{\prime}}
    \int_{\mathcal{C}^{\prime}}
     \,\mathcal{D}(A^{\prime})\mathcal{D}(\phi^{\prime})
      \mathcal{D}(\hat{c}^{\prime})
       \mathcal{D}(c^{\prime}) \,F(A_{0} + A^{\prime})
        \\[0.2cm]
  &  \times \exp\left[
   L(A_{0}) + \sqrt{-1}k
    \left((A^{\prime},\phi^{\prime}), 
     \, Q_{A_{0}}(A^{\prime},\phi^{\prime})\right)_{+}
      + (\hat{c}^{\prime}, \Delta_{0} c^{\prime})
       \right]
 \end{split}
\end{equation}
(see~\cite{Bar-Natan-Witten, Mitoma1} for details).
Here we denote by $(\,\ ,\ )_{+}$ 
the inner product of the Hilbert space 
$L^{2}(\Omega_{+}) = L^{2}\left(\Omega^{1}
 (M, \mathfrak{g}) \oplus \Omega^{3}(M, \mathfrak{g})
  \right)$
given by
\begin{equation*}
 \left((A,\phi), \, (B,\varphi)\right)_{+} = (A, \, B)
  + (\phi, \, \varphi ),
\end{equation*}
where the inner product and the norm on
$\Omega^{r}(M, \mathfrak{g})$ are defined by
\begin{equation}\label{eqn:1.8}
 (\omega,\eta)
  = - \int_{M} \operatorname{Tr}
   \omega \wedge \ast \eta,  \quad
    \vert \cdot \vert =
     \sqrt{(\,\cdot\ , \,\cdot\,)}.
\end{equation}
Furthermore, $Q_{A_{0}}$ is a {\em twisted 
Dirac operator} defined by
\begin{equation}\label{eqn:1.9}
 Q_{A_{0}} = \left(\ast d_{A_{0}}
  + d_{A_{0}}\ast \right)J,
\end{equation}
where $J\varphi = -\varphi$ if $\varphi$ is a $0$-form or 
a $3$-form, and $J\varphi = \varphi$ if $\varphi$ is a 
$1$-form or a $2$-form.
It should be noted that $Q_{A_0}$ is a self-adjoint
elliptic operator, and
$\Delta_0 = (d_{A_{0}})^{*} d_{A_0}$ is the 
Laplacian acting on $\Omega^{0}(M, \mathfrak{g})$.

Finally, balancing out the contributions coming of
the term $L(A_{0})$ as well as the Fermi integral
\begin{equation*}
 \int_{\widehat{\mathcal{C}}^{\prime}}
  \int_{\mathcal{C}^{\prime}}
  \mathcal{D}(\hat{c}^{\prime})
   \mathcal{D}(c^{\prime})
    \,e^{(\hat{c}^{\prime}, \,\Delta_{0} c^{\prime})},
\end{equation*}
we arrive at, from \eqref{eqn:1.7}, 
the {\em normalized one-loop approximation}
of the Lorentz gauge-fixed Chern-Simons integral:
\begin{equation}\label{eqn:1.10}
 \frac{1}{Z} \int_{\mathcal{A}}\! \int_{\Phi}
   \,F(A_{0} + A) \exp \left[\sqrt{-1}k
    \left((A,\phi), \,Q_{A_{0}}(A,\phi)
     \right)_{+}\right]
      \mathcal{D}(A) \mathcal{D}(\phi),
\end{equation}
where
\begin{equation*}
 Z = \int_{\mathcal{A}}\! \int_{\Phi}
  \,\exp\left[\sqrt{-1}k
   \left((A,\phi), \,Q_{A_{0}}(A,\phi)
    \right)_{+} \right]
     \mathcal{D}(A) \mathcal{D}(\phi).
\end{equation*}
Our primary objective is to give a rigorous 
mathematical meaning to this normalized one-loop
approximation of the perturbative Chern-Simons 
integral \eqref{eqn:1.10}.

\section{Stochastic holonomy}
\label{sect:3}
To handle the integral \eqref{eqn:1.10} in an abstract Wiener 
space setting, we need to extend the holonomy of a smooth 
connection $A$ around a closed oriented loop $\gamma$,
\[
 \mathcal{P} \exp{\int_{\gamma} A},
\]
to a rough connection $A$.
To this end we regularize the Wilson line in a manner similar 
to that in~\cite{Albeverio-Schaefer}, 
which is suitable for our abstract Wiener space setting. 

As in the previous section, 
let $M$ be a compact oriented smooth three-manifold, 
$G$ a simply connected, connected compact simple 
Lie group with Lie algebra $\mathfrak{g}$, and
$P \to M$ a principal $G$-bundle over $M$.
Let $\mathcal{A}$ be the space of connections on $P$,
which is identified with $\Omega^{1}(M, \mathfrak{g})$,
the space of $\mathfrak{g}$-valued smooth $1$-forms on $M$, 
and denote by $\{E_{\alpha}\}$, $1 \leq \alpha \leq d$, 
a given basis of $\mathfrak{g}$.
Let $\gamma : [0,1] \ni \tau \mapsto \gamma(\tau) \in M$ 
be a closed smooth curve in $M$, and set
$\gamma[s, t] = \{\gamma (\tau) \mid s \leq \tau \leq t\}$.
We regard $\gamma[s, t]$ as a linear functional
\begin{equation*}
  (\gamma[s, t]) [A] = \int_{\gamma[s, t]} A
   = \int_s^t A(\dot\gamma(\tau))d\tau,
    \quad  A \in \mathcal{A}
\end{equation*}
defined on the vector space $\mathcal{A}$.
Then $\gamma[s, t]$ is continuous in the sense of distribution 
and hence defines a (${\mathfrak g}$-valued) de Rham current 
of degree two.

To recall the regularization of currents, we first consider 
the case where $\gamma$ is a closed smooth curve in 
$\boldsymbol{R}^3$
and $A$ is a ${\mathfrak g}$-valued smooth $1$-form with 
compact support defined on ${\boldsymbol R}^3$.
Let $\phi$ be a non-negative smooth function on 
$\boldsymbol{R}^3$ such that the support of $\phi$ is 
contained in the unit ball $\boldsymbol{B}^3$ with center 
$0 \in \boldsymbol{R}^3$ and
\[
 \int_{\boldsymbol{R}^3} \phi(x) dx = 1.
\]
Then define 
$\phi_{\epsilon}(x) = \epsilon^{-3} \phi(x/\epsilon)$
for each $\epsilon > 0$.
If we write
\begin{equation*}
 A = \sum_{\alpha} A^{\alpha} \otimes E_{\alpha}
  = \sum_{i,\,\alpha} {A_{i}}^{\alpha}\, dx^{i} 
   \otimes E_{\alpha},
 \quad  \dot{\gamma}(\tau)
  = \sum_{i} \dot{\gamma}^{i}(\tau)
   \left(\frac{\partial}{\partial x^{i}}
    \right)_{\gamma(\tau)}
\end{equation*}
for given $A$ and $\gamma$, 
then we have
\begin{equation}\label{eqn:2.1}
 \lim_{\epsilon \to 0} \, \sup_{s\leq \tau \leq t} \,
  \left\vert \int_{\boldsymbol{R}^3} {A_{i}}^{\alpha}(x)
   \phi_{\epsilon} (x - \gamma(\tau))dx
    - {A_{i}}^{\alpha}(\gamma(\tau)) \right\vert = 0,
\end{equation}
and
\begin{equation}\label{eqn:2.2}
 \left\vert \sum_{i=1}^3
  \int_{s}^{t} \left( \int_{\boldsymbol{R}^3} 
   {A_{i}}^{\alpha}(x)
   \phi_{\epsilon}(x - \gamma (\tau))dx \right)
    \dot{\gamma}^{i}(\tau) d\tau \right\vert
     \leq c_{1}(\epsilon)
      \Vert A^{\alpha} \Vert_{L^2({\boldsymbol{R}^3})}
       |t - s|. 
\end{equation}
Here and in what follows, we denote by $c_{k}(\star)$ a 
constant depending on the quantity $\star$ and simply 
write $c_{k}$ whenever no confusion may occur.

Now, according to de Rham~\cite{de Rham}, 
the regulator of the current $\gamma[s,t]$ is defined by
\begin{align*}
(\mathcal{R}_{\epsilon} \gamma [s, t]) [A]
 & = (\gamma[s, t]) [\mathcal{R}_{\epsilon}^{\ast} A]  \\
 & = \sum_{i=1}^{3} \int_s^t \left(\int_{\boldsymbol{R}^3}
  {A_{i}}^{\alpha} (\gamma(\tau) + y)\phi_{\epsilon}(y)dy \right)
   \dot\gamma^{i}(\tau) d\tau \otimes E_{\alpha}  \\
 & = \sum_{i=1}^{3} \int_s^t \left(\int_{\boldsymbol{R}^3}
   {A_{i}}^{\alpha}(x)\phi_{\epsilon}(x - \gamma(\tau))dx \right)
    \dot\gamma^{i}(\tau) d\tau \otimes E_{\alpha},
\end{align*}
to which is associated an operator defined by
\begin{align*}
(\mathcal{A}_{\epsilon} \gamma[s, t])[B]
 & = (\gamma[s, t])[\mathcal{A}_{\epsilon}^{\ast} B]  \\
 & = \sum_{i,j=1}^3 \int_s^t \left\{\int_{\boldsymbol{R}^3} 
  \left(\int_{0}^{1} y^{i} {B_{ij}}^{\alpha}(\gamma(\tau) + ty)dt
   \right) \phi_{\epsilon}(y)dy \right\} \dot\gamma^j(\tau) 
    d\tau \otimes E_{\alpha},
\end{align*}
where 
$B = \sum {B_{ij}}^{\alpha}\, dx^{i} \wedge dx^{j} \otimes 
E_{\alpha}$ is a ${\mathfrak g}$-valued smooth $2$-form with 
compact support on $\boldsymbol{R}^3$.
Then we have the following relation between the operators 
$\mathcal{R}_{\epsilon}$ and $\mathcal{A}_{\epsilon}$,
which is known as the homotopy formula
(see \cite[\S 15]{de Rham} for details).
\begin{proposition}\label{prop:1}\ 
For each $\epsilon > 0$,
$\mathcal{R}_{\epsilon}\gamma[s, t]$ and 
$\mathcal{A}_{\epsilon}\gamma[s, t]$ are currents 
whose supports are contained in the $\epsilon$-tubular 
neighborhood of $\gamma[s, t]$, and satisfy
\[
 \mathcal{R}_{\epsilon}\gamma[s, t] - \gamma[s, t] 
  = \partial \mathcal{A}_{\epsilon}\gamma[s, t] 
   + \mathcal{A}_{\epsilon} \partial \gamma[s, t],
\]
where $\partial$ is the boundary operator of currents.
\end{proposition}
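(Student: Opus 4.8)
The plan is to reduce the current-level identity to the classical homotopy formula for differential forms and then dualize. The key observation is that the transpose operators $\mathcal{R}_{\epsilon}^{\ast}$ and $\mathcal{A}_{\epsilon}^{\ast}$ appearing in the definitions are precisely the averaged translation pull-back and the averaged Poincar\'e homotopy operator attached to the convolution by $\phi_{\epsilon}$. Writing $\tau_{y}(x)=x+y$ for translation by $y\in\boldsymbol{R}^{3}$, one has $\mathcal{R}_{\epsilon}^{\ast}\omega=\int_{\boldsymbol{R}^{3}}\tau_{y}^{\ast}\omega\,\phi_{\epsilon}(y)\,dy$, since $\tau_{y}^{\ast}\big(\sum A_{i}\,dx^{i}\big)=\sum A_{i}(\,\cdot+y)\,dx^{i}$. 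Likewise, using the straight-line homotopy $H_{t}(x)=x+ty$ joining $H_{0}=\mathrm{id}$ to $H_{1}=\tau_{y}$, the homotopy operator $K_{y}\eta=\int_{0}^{1}\iota_{y}\big(H_{t}^{\ast}\eta\big)\,dt$ (interior multiplication by the constant velocity field $y$) integrates against $\phi_{\epsilon}$ to give $\mathcal{A}_{\epsilon}^{\ast}=\int_{\boldsymbol{R}^{3}}K_{y}(\,\cdot\,)\,\phi_{\epsilon}(y)\,dy$; a direct computation of $\iota_{y}$ on a $2$-form, using the antisymmetry $B_{ij}=-B_{ji}$, shows that the $dx^{j}$-component of $\mathcal{A}_{\epsilon}^{\ast}B$ is exactly $\sum_{i}\int_{\boldsymbol{R}^{3}}\big(\int_{0}^{1}y^{i}B_{ij}(\,\cdot+ty)\,dt\big)\phi_{\epsilon}(y)\,dy$, matching the displayed formula for $\mathcal{A}_{\epsilon}\gamma[s,t]$.

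Granting this identification, I would first record the support statement, which is the cheap part. Since $\phi_{\epsilon}$ vanishes outside the ball of radius $\epsilon$, the integrals defining $(\mathcal{R}_{\epsilon}\gamma[s,t])[A]$ and $(\mathcal{A}_{\epsilon}\gamma[s,t])[B]$ only involve the values of $A$ and $B$ at the points $\gamma(\tau)+y$, respectively $\gamma(\tau)+ty$ with $t\in[0,1]$ and $|y|\le\epsilon$. All such points lie in the $\epsilon$-tubular neighborhood of $\gamma[s,t]$, so if a test form is supported outside that neighborhood the pairing vanishes, which is the asserted containment of supports.

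Next, for each fixed $y$ the classical homotopy formula for the smooth translation $\tau_{y}$ reads $\tau_{y}^{\ast}\eta-\eta=d(K_{y}\eta)+K_{y}(d\eta)$ on forms of every degree. Multiplying by $\phi_{\epsilon}(y)$ and integrating over $y$, the exterior derivative commuting with the integral thanks to the smoothness and compact support of the integrand, yields the form-level identity
\[
 \mathcal{R}_{\epsilon}^{\ast}\eta-\eta
  = d\big(\mathcal{A}_{\epsilon}^{\ast}\eta\big)
   + \mathcal{A}_{\epsilon}^{\ast}(d\eta).
\]
This holds in each degree: the component of $\mathcal{A}_{\epsilon}^{\ast}$ on $2$-forms is the one displayed in the definition, while its component on $1$-forms is the one implicit in the term $\mathcal{A}_{\epsilon}\partial\gamma[s,t]$.

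Finally I would dualize. Applying the identity to a $1$-form $\omega$, pairing against the one-dimensional current $\gamma[s,t]$, and invoking the definitions $(\mathcal{R}_{\epsilon}T)[\omega]=T[\mathcal{R}_{\epsilon}^{\ast}\omega]$, $(\mathcal{A}_{\epsilon}T)[B]=T[\mathcal{A}_{\epsilon}^{\ast}B]$ together with the defining relation $(\partial T)[\eta]=T[d\eta]$ for the boundary of currents, the term $(\gamma[s,t])[d(\mathcal{A}_{\epsilon}^{\ast}\omega)]$ becomes $(\partial\gamma[s,t])[\mathcal{A}_{\epsilon}^{\ast}\omega]=(\mathcal{A}_{\epsilon}\partial\gamma[s,t])[\omega]$, whereas $(\gamma[s,t])[\mathcal{A}_{\epsilon}^{\ast}(d\omega)]=(\mathcal{A}_{\epsilon}\gamma[s,t])[d\omega]=(\partial\mathcal{A}_{\epsilon}\gamma[s,t])[\omega]$. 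Collecting terms gives exactly $\mathcal{R}_{\epsilon}\gamma[s,t]-\gamma[s,t]=\partial\mathcal{A}_{\epsilon}\gamma[s,t]+\mathcal{A}_{\epsilon}\partial\gamma[s,t]$, and since every operation acts trivially on the Lie-algebra factor $E_{\alpha}$, the $\mathfrak{g}$-valued case follows component-wise. The main obstacle I anticipate is bookkeeping rather than ideas: getting the signs in the Poincar\'e formula and in the boundary convention $(\partial T)[\eta]=T[d\eta]$ consistent, keeping track that $\mathcal{A}_{\epsilon}^{\ast}$ must be used in two distinct form degrees when dualizing, and verifying that the explicit index expression for $\mathcal{A}_{\epsilon}$ genuinely equals $\int K_{y}\,\phi_{\epsilon}\,dy$; each is routine, but all must be executed carefully to land the stated identity without a spurious sign.
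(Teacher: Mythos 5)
Your proof is correct, and it reconstructs exactly the argument the paper delegates to de Rham (the paper offers no in-text proof of Proposition 1, citing \cite[\S 15]{de Rham}): the transposes $\mathcal{R}_{\epsilon}^{\ast}$ and $\mathcal{A}_{\epsilon}^{\ast}$ are the mollifier-averaged translation pull-back and straight-line homotopy operator, the classical formula $\tau_{y}^{\ast}\eta-\eta=d(K_{y}\eta)+K_{y}(d\eta)$ is averaged against $\phi_{\epsilon}$, and the identity is then dualized to currents via $(\partial T)[\eta]=T[d\eta]$, with the support statement following from $\operatorname{supp}\phi_{\epsilon}$ lying in the ball of radius $\epsilon$. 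The only point requiring care, which you correctly flag, is the summation convention in $B=\sum {B_{ij}}^{\alpha}\,dx^{i}\wedge dx^{j}\otimes E_{\alpha}$ (sum over $i<j$ with antisymmetric extension versus over all $i,j$), which affects only a factor of $2$ in matching $\int K_{y}(\,\cdot\,)\phi_{\epsilon}(y)\,dy$ to the displayed formula for $\mathcal{A}_{\epsilon}$ and not the validity of the homotopy identity.
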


As in \cite{de Rham}, the above construction of regularization 
generalizes to our case in the following manner.
First take a diffeomorphism $h$ of $\boldsymbol{R}^3$ onto 
the unit ball $\boldsymbol{B}^3$ with center $0$ which coincides
with the identity on the ball of radius $1/3$ with center $0$.
Denote by $s_y$ the translation $s_y(x) = x + y$ and let
$\boldsymbol{s}_y$ be the map of $\boldsymbol{R}^3$ onto 
itself which coincides with $h \circ s_y \circ h^{-1}$ on 
$\boldsymbol{B}^3$ and with the identity at all other points, 
that is,
\begin{equation*}
 \boldsymbol{s}_y(x) =
  \begin{cases}
   h \circ s_y \circ h^{-1}(x)  & \mathrm{if}\  x \in
    \boldsymbol{B}^3,  \\
   x  &  \mathrm{if}\  x \not\in \boldsymbol{B}^3.
  \end{cases}
\end{equation*}
Note that with a suitable choice of $h$ we may make 
$\boldsymbol{s}_y$ to be a diffeomorphism.
Then define $\boldsymbol{\mathcal{R}}_{\epsilon}\gamma[s, t]$ 
and $\boldsymbol{\mathcal{A}}_{\epsilon}\gamma[s, t]$ by 
the same equations above, but now replacing $\gamma(\tau) + y$
and $\gamma(\tau) + ty$ with $\boldsymbol{s}_y(\gamma(\tau))$ 
and $\boldsymbol{s}_{ty}(\gamma(\tau))$, respectively. 

Now, let $\{U_{i}\}$ be a finite open covering of $M$ such that 
each $U_{i}$ is diffeomorphic to the unit ball $\boldsymbol{B}^{3}$ 
via a diffeomorphism $h_i$, which can be extended to some 
neighborhoods of the closures of $U_{i}$ and of $\boldsymbol{B}^{3}$.
Using these diffeomorphisms, we transport the transformed
operators $\boldsymbol{\mathcal{R}}_{\epsilon}$ and 
$\boldsymbol{\mathcal{A}}_{\epsilon}$ defined on 
$\boldsymbol{R}^3$ to $M$.
Indeed, let $f$ be a cutoff function which has its support 
in the neighborhood of the closure of $U_i$ and is equal to 
$1$ on $U_i$.
Set $T = \gamma[s, t]$ for simplicity.
Then $T' = fT$ is a current which has its support contained
in the neighborhood of the closure of $U_i$, and $h_i T'$ is 
a current which has its support contained in the neighborhood 
of the closure of $\boldsymbol{B}^3$.
Note that the support of $T'' = T - T'$ does not meet the 
closure of $U_i$.
We define
\begin{equation*}
 \mathcal{R}_{\epsilon}^i T = h_i^{-1} \circ
  \boldsymbol{\mathcal{R}_{\epsilon}} \circ h_i T' + T'', \quad
 \mathcal{A}_{\epsilon}^i T = h_i^{-1} \circ
  \boldsymbol{\mathcal{A}_{\epsilon}} \circ h_i T'
\end{equation*}
and set inductively
\begin{equation*}
 \mathcal{R}_{\epsilon}^{(k)} T =
  \mathcal{R}_{\epsilon}^1 \circ \mathcal{R}_{\epsilon}^2 \circ 
   \cdots \circ \mathcal{R}_{\epsilon}^k T,  \quad
 \mathcal{A}_{\epsilon}^{(k)} T =
  \mathcal{R}_{\epsilon}^1 \circ \mathcal{R}_{\epsilon}^2 \circ 
   \cdots \circ \mathcal{R}_{\epsilon}^{k-1} \circ 
    \mathcal{A}_{\epsilon}^k T.
\end{equation*}
Then $\mathcal{R}_{\epsilon}T$ and $\mathcal{A}_{\epsilon}T$ are
obtained to be
\begin{equation*}
 \mathcal{R}_{\epsilon}T = \mathcal{R}_{\epsilon}^{(N)}T, \quad
  \mathcal{A}_{\epsilon}T = \sum_{k=1}^{N}
   \mathcal{A}_{\epsilon}^{(k)}T,
\end{equation*}
where $N$ is the number of open sets in $\{U_i\}$.

The construction of the operators $\mathcal{R}_{\epsilon}$ and
$\mathcal{A}_{\epsilon}$ are easily generalized to any current 
$T$ defined on a compact smooth manifold of arbitrary dimension.
We remark that the following properties hold for regularization
of currents.
\begin{proposition}[\cite{de Rham}]\label{prop:2}\ 
Let $M$ be a compact smooth manifold.
Then for each $\epsilon > 0$ there exist linear operators
$\mathcal{R}_{\epsilon}$ and $\mathcal{A}_{\epsilon}$ acting on 
the space of de Rham currents with the following properties$:$

$(1)$  If $T$ is a current, then $\mathcal{R}_{\epsilon}T$ and
$\mathcal{A}_{\epsilon}T$ are also currents and satisfy
\[
 \mathcal{R}_{\epsilon}T - T = \partial \mathcal{A}_{\epsilon}T
  + \mathcal{A}_{\epsilon} \partial T.
\]

$(2)$  The supports of $\mathcal{R}_{\epsilon}T$ and 
$\mathcal{A}_{\epsilon}T$ are contained in an arbitrary given
neighborhood of the support of $T$ provided that $\epsilon$ is
sufficiently small.

$(3)$  $\mathcal{R}_{\epsilon}T$ is a smooth form.

$(4)$  For all smooth forms $\varphi$ we have
\[
 \mathcal{R}_{\epsilon}T[\varphi] \to T[\varphi]
  \quad   and   \quad
   \mathcal{A}_{\epsilon}T[\varphi] \to 0
\]
as $\epsilon \to 0$.
\end{proposition}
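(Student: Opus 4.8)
The plan is to build the global operators on $M$ out of the Euclidean ones by patching over the finite chart cover $\{U_i\}$, and to deduce each of the four properties from the corresponding local statement already recorded in Proposition~\ref{prop:1}. Since the cover has $N$ members, every assertion reduces to a finite composition of chart-level operators, so no convergence issue in the number of charts arises and all limits can be taken uniformly.

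First I would verify the homotopy formula (1), whose heart is purely homological. Transporting the Euclidean homotopy identity of Proposition~\ref{prop:1} through the diffeomorphism $h_i$, which as a pushforward of currents commutes with $\partial$, I expect each chart operator to satisfy
\begin{equation*}
 \mathcal{R}_{\epsilon}^{i} - I = \partial \mathcal{A}_{\epsilon}^{i} + \mathcal{A}_{\epsilon}^{i}\partial .
\end{equation*}
From this relation alone, together with $\partial^{2} = 0$, one gets $\mathcal{R}_{\epsilon}^{i}\partial = \partial \mathcal{R}_{\epsilon}^{i}$, that is, each $\mathcal{R}_{\epsilon}^{i}$ is a chain map. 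An induction on $N$ then shows that the composite $\mathcal{R}_{\epsilon} = \mathcal{R}_{\epsilon}^{(N)} = \mathcal{R}_{\epsilon}^{1}\cdots \mathcal{R}_{\epsilon}^{N}$ is again chain homotopic to the identity, the homotopy being exactly $\mathcal{A}_{\epsilon} = \sum_{k=1}^{N}\mathcal{R}_{\epsilon}^{1}\cdots \mathcal{R}_{\epsilon}^{k-1}\mathcal{A}_{\epsilon}^{k}$: assuming the identity for the first $N-1$ factors, one writes $\mathcal{R}_{\epsilon}^{1}\cdots \mathcal{R}_{\epsilon}^{N} - I = \mathcal{R}_{\epsilon}^{1}\cdots \mathcal{R}_{\epsilon}^{N-1}(\mathcal{R}_{\epsilon}^{N} - I) + (\mathcal{R}_{\epsilon}^{1}\cdots \mathcal{R}_{\epsilon}^{N-1} - I)$ and pushes the leading chain map past $\partial$ to regroup the two homotopies into a single $\mathcal{A}_{\epsilon}$. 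This matches the definitions of $\mathcal{R}_{\epsilon}T$ and $\mathcal{A}_{\epsilon}T$ given above, proving (1).

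For the remaining properties I would argue chart by chart. Property (2) follows because, by Proposition~\ref{prop:1}, each $\boldsymbol{\mathcal{R}}_{\epsilon}$ and $\boldsymbol{\mathcal{A}}_{\epsilon}$ enlarges supports only within an $\epsilon$-tube; composing $N$ of them enlarges the support of $T$ by at most an $O(N\epsilon)$ amount, which lies inside any prescribed neighborhood once $\epsilon$ is small. Property (3) uses that $\boldsymbol{\mathcal{R}}_{\epsilon}$ smooths by convolution against $\phi_{\epsilon}$: since the $U_i$ cover $M$, after all $N$ chart smoothings every portion of the current has been convolved at least once, so $\mathcal{R}_{\epsilon}T$ is a genuine smooth form. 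Property (4) is the limit $\epsilon \to 0$: the mollifier estimates \eqref{eqn:2.1}--\eqref{eqn:2.2} give $\boldsymbol{\mathcal{R}}_{\epsilon}\to \mathrm{id}$ and $\boldsymbol{\mathcal{A}}_{\epsilon}\to 0$ weakly against smooth test forms, and propagating these through the finite composition yields $\mathcal{R}_{\epsilon}T[\varphi]\to T[\varphi]$ and $\mathcal{A}_{\epsilon}T[\varphi]\to 0$.

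The step I expect to be the main obstacle is the bookkeeping behind the chart-level homotopy formula in (1). Because the localization is carried out by multiplying $T$ with a cutoff $f$, the boundary $\partial(fT)$ differs from $f\,\partial T$ by a term involving $df$, and one must check that de Rham's construction of $\boldsymbol{\mathcal{A}}_{\epsilon}$ and the splitting $T = T' + T''$ are arranged so that these cutoff contributions cancel and the clean identity $\mathcal{R}_{\epsilon}^{i} - I = \partial \mathcal{A}_{\epsilon}^{i} + \mathcal{A}_{\epsilon}^{i}\partial$ survives. A secondary technical point is controlling the $\epsilon$-tube enlargements of the $N$ successive operators so that they do not accumulate beyond the prescribed neighborhood, which is precisely what forces the smallness requirement on $\epsilon$ in (2).
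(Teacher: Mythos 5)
Your proposal is correct and follows essentially the same route as the paper, which states this proposition without proof (citing de Rham's book, \S 15) and recalls beforehand exactly the chart-by-chart construction, chain-map property, and telescoping homotopy $\mathcal{A}_{\epsilon}=\sum_{k=1}^{N}\mathcal{R}_{\epsilon}^{1}\cdots\mathcal{R}_{\epsilon}^{k-1}\mathcal{A}_{\epsilon}^{k}$ that you employ. The cutoff worry you flag as the main obstacle resolves just as you anticipate: since $df$ vanishes on $U_{i}$, the commutator $\partial(fT)-f\,\partial T$ is supported where $h_{i}$ carries it outside $\boldsymbol{B}^{3}$, where $\boldsymbol{s}_{y}$ (and the isotopy $\boldsymbol{s}_{ty}$) is the identity, so $\boldsymbol{\mathcal{A}}_{\epsilon}$ annihilates this term and the clean chart-level identity $\mathcal{R}_{\epsilon}^{i}-I=\partial\mathcal{A}_{\epsilon}^{i}+\mathcal{A}_{\epsilon}^{i}\partial$ holds exactly.
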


Given a closed smooth curve $\gamma : [0, 1] \to M$ in $M$,
for each $t \in [0, 1]$ and sufficiently small $\epsilon > 0$
we consider a smooth current associated to $\gamma[0, t]$ 
defined by
\begin{equation*}\label{eqn:3.b}
 C_{\gamma}^{\epsilon}(t) = \ast\mathcal{R}_{\epsilon}
  \gamma[0, t],
\end{equation*}
where $\ast$ is the Hodge $\ast$-operator defined by
a Riemannian metric chosen on $M$, 
and write
$C_{\gamma}^{\epsilon}(t) = \sum
 C_{\gamma}^{\epsilon}(t)^{\alpha} \otimes E_{\alpha}$.
Let $U_{\gamma}$ be a tubular neighborhood of 
$\gamma[0, 1]$ in $M$ and $j : U_{\gamma} \to M$ denote 
the inclusion.
Then
\begin{equation*}
 j^{\ast}(\ast C_{\gamma}^{\epsilon}(t)) = 
  j^{\ast}(\mathcal{R}_{\epsilon} \gamma[0, t])
\end{equation*}
is a $\mathfrak{g}$-valued smooth $2$-form on $U_{\gamma}$ 
and  has a compact support in $U_{\gamma}$ from 
Proposition \ref{prop:2}.
In particular, for $t = 1$ we see that 
\begin{equation*}
 dj^{\ast}(\ast C_{\gamma}^{\epsilon}(1))
  = d j^{\ast}(\mathcal{R}_{\epsilon}\gamma[0, 1])
   = j^{\ast} d(\mathcal{R}_{\epsilon}\gamma[0, 1])
    = - j^{\ast} \mathcal{R}_{\epsilon} \partial(\gamma[0, 1])
     = 0,
\end{equation*}
since $\partial(\gamma[0, 1]) = \emptyset$.

As a result, each
$j^{\ast}(\ast C_{\gamma}^{\epsilon}(1)^{\alpha})$
determines a cohomology class
$[j^{\ast}(\ast C_{\gamma}^{\epsilon}(1)^{\alpha})]
 \in H^2_{c}(U_{\gamma})$
in the second de Rham cohomology of $U_{\gamma}$
with compact support.
Indeed, by virtue of Proposition~\ref{prop:2}\,(1),
it is not hard to see that 
\begin{equation*}
 \int_{U_{\gamma}} \omega \wedge
  j^{\ast}(\ast C_{\gamma}^{\epsilon}(1)^{\alpha})
   = \int_{\gamma} i^{\ast}\omega
\end{equation*}
holds for any $[\omega] \in H^{1}_{c}(U_{\gamma})$,
where $i : \gamma[0, 1] \to U_{\gamma}$ denotes
the inclusion.
Namely, we have
\begin{proposition}[\cite{Albeverio-Schaefer}]
\label{prop:3}\ 
$[j^{\ast}(\ast C_{\gamma}^{\epsilon}(1)^{\alpha}) ] 
  \in H^2_{c}(U_{\gamma})$
is the compact Poincar\'{e} dual of $\gamma$ in 
$U_{\gamma}$ for each $\alpha = 1, 2, 3$.
\end{proposition}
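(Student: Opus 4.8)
The plan is to recognize that Proposition~\ref{prop:3} is essentially a reformulation of the integral identity displayed just before it, so the task splits into two parts: first, reduce the assertion to the defining property of the compact Poincar\'e dual, and second, verify that property via the homotopy formula. Recall that for a compact oriented $1$-cycle $\gamma$ in the oriented $3$-manifold $U_{\gamma}$, the compact Poincar\'e dual is the unique class $\eta \in H^2_c(U_{\gamma})$ satisfying $\int_{U_{\gamma}} \omega \wedge \eta = \int_{\gamma} i^{\ast}\omega$ for every closed $1$-form $\omega$. Since the computation preceding the statement already shows that $d\,j^{\ast}(\ast C_{\gamma}^{\epsilon}(1)^{\alpha}) = 0$ and (by Propositions~\ref{prop:1} and~\ref{prop:2}\,(2)) that $\ast C_{\gamma}^{\epsilon}(1)^{\alpha}$ has compact support in $U_{\gamma}$ for small $\epsilon$, the class $[j^{\ast}(\ast C_{\gamma}^{\epsilon}(1)^{\alpha})] \in H^2_c(U_{\gamma})$ is well defined, and it only remains to pin down the duality identity and its invariance under the choice of representative of $[\omega]$.

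For the key identity I would argue directly with currents. Applying the homotopy formula of Proposition~\ref{prop:2}\,(1) to $T = \gamma[0,1]$ and using $\partial T = 0$, since $\gamma$ is a closed loop, reduces it to $\mathcal{R}_{\epsilon}T - T = \partial \mathcal{A}_{\epsilon}T$. Evaluating both sides on a closed $1$-form $\omega$ and recalling that the boundary of a current acts by $(\partial S)[\varphi] = S[d\varphi]$, the right-hand side becomes $(\mathcal{A}_{\epsilon}T)[d\omega] = 0$. Hence $(\mathcal{R}_{\epsilon}T)[\omega] = T[\omega] = \int_{\gamma} i^{\ast}\omega$. Rewriting the left-hand side through the smooth $2$-form representing $\mathcal{R}_{\epsilon}\gamma[0,1]$ and using $\ast C_{\gamma}^{\epsilon}(1) = \ast\ast\,\mathcal{R}_{\epsilon}\gamma[0,1] = \mathcal{R}_{\epsilon}\gamma[0,1]$, which holds because $\ast^2 = \mathrm{id}$, produces exactly $\int_{U_{\gamma}} \omega \wedge j^{\ast}(\ast C_{\gamma}^{\epsilon}(1)^{\alpha}) = \int_{\gamma} i^{\ast}\omega$.

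To finish, I would check that both sides of this identity descend to cohomology. If $\omega = d\sigma$ is exact, then $\int_{\gamma} i^{\ast}(d\sigma) = \int_{\gamma} d(i^{\ast}\sigma) = 0$ by Stokes, while $\int_{U_{\gamma}} d\sigma \wedge j^{\ast}(\ast C_{\gamma}^{\epsilon}(1)^{\alpha}) = \pm \int_{U_{\gamma}} \sigma \wedge d\,j^{\ast}(\ast C_{\gamma}^{\epsilon}(1)^{\alpha}) = 0$, using Stokes together with the compact support and closedness of $\ast C_{\gamma}^{\epsilon}(1)^{\alpha}$. Thus the identity depends only on $[\omega]$, so $[j^{\ast}(\ast C_{\gamma}^{\epsilon}(1)^{\alpha})]$ represents the period functional $[\omega] \mapsto \int_{\gamma} i^{\ast}\omega$ on $H^1_c(U_{\gamma})$ and is therefore the compact Poincar\'e dual of $\gamma$ in $U_{\gamma}$, for each $\alpha$.

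The main obstacle will be bookkeeping rather than conceptual difficulty: one must fix compatible orientation and sign conventions for the pairing of currents with forms, for the coboundary action of $\partial$, and for the wedge product, so that the Stokes step and the identification of $\mathcal{R}_{\epsilon}\gamma[0,1]$ with its representing $2$-form introduce no spurious signs. One must also confirm that, for $\epsilon$ small, the representing form genuinely has compact support inside the tubular neighborhood $U_{\gamma}$, which is exactly the content of the support statements in Propositions~\ref{prop:1} and~\ref{prop:2}\,(2). Once these conventions are fixed, both steps are routine within de Rham's regularization framework.
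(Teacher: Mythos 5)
Your proposal is correct and follows essentially the same route as the paper: the paper likewise derives the duality identity $\int_{U_{\gamma}} \omega \wedge j^{\ast}(\ast C_{\gamma}^{\epsilon}(1)^{\alpha}) = \int_{\gamma} i^{\ast}\omega$ from the homotopy formula of Proposition~2\,(1) with $\partial\gamma[0,1]=\emptyset$, together with the closedness and compact-support facts established just before the statement. Your write-up merely makes explicit the steps the paper compresses into ``it is not hard to see'' --- evaluating $\partial\mathcal{A}_{\epsilon}T$ on closed forms, the identification $\ast C_{\gamma}^{\epsilon}(1)=\mathcal{R}_{\epsilon}\gamma[0,1]$ via $\ast^{2}=\mathrm{id}$, and the Stokes check that the identity descends to cohomology --- so there is no substantive difference.
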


Recalling the construction of regulators 
of currents and noting \eqref{eqn:2.1} and \eqref{eqn:2.2}, 
it is not hard to see that we have
\begin{equation*}
 \lim_{\epsilon \to 0} \, \sup_{0 \leq t \leq 1} \,
  \left| \sum_{i=1}^3 \int_0^t \left( \int_M
   {A_{i}}^{\alpha}(x)\phi_{\epsilon}(x - \gamma(\tau))dx 
    - {A_{i}}^{\alpha}(\gamma(\tau)) \right)
     \dot \gamma^{i}(\tau) d\tau \right| = 0,
\end{equation*}
\begin{equation}\label{eqn:2.5a}
 \bigg| \int_{\gamma[0,t]} A^{\alpha}
  - \int_{\gamma[0,s]} A^{\alpha} \bigg| 
   \leq c_{2}(A)|t - s|,
\end{equation}
and
\begin{equation}\label{eqn:2.5b}
 \left| C_{\gamma}^{\epsilon}(t)
  - C_{\gamma}^{\epsilon}(s) \right| 
   \leq c_{1} (\epsilon ) |t - s|,
\end{equation}
where $\vert \cdot\vert$ on the left side of \eqref{eqn:2.5b}
is the norm defined in \eqref{eqn:1.8}.
\medskip

Now, in order to extend the holonomy to a rough connection $A$,
for a non-negative integer $p$, 
let $H_{p}(\Omega_{+})$ denote the Hilbert subspace of
$L^{2}(\Omega_{+}) = L^2\left(\Omega^{1}(M, \mathfrak{g})
 \oplus \Omega^{3}(M, \mathfrak{g})\right)$ 
with new inner product $(\,\ , \ )_{p}$ defined by
\begin{equation}\label{eqn:2.5}
\begin{aligned}
 \big( (A,\phi), \, (B,\varphi) \big)_{p} 
  &  = \Big( (A,\phi), \, \left(I + Q_{A_{0}}^{2}\right)^{p}
   (B,\varphi) \Big)_{+}  \\
  & = \left(A, \, \left(I + Q_{A_{0}}^{2}\right)^{p}B\right)
    + \left(\phi, \, \left(I + Q_{A_{0}}^{2}\right)^{p}
     \varphi\right).
\end{aligned}
\end{equation}
Here $I$ is the identity operator on $L^{p}(\Omega_{+})$,
and the $p$-norm on $H_{p}(\Omega_{+})$ is defined as usual
by
$
 \Vert \cdot \Vert_{p} = \sqrt{(\,\cdot\  , \,\cdot\,)_{p}}
$.
Henceforth we denote $H_{p}(\Omega_{+})$ briefly by $H_{p}$
whenever no confusion may occur.
 
Then the holonomy for a smooth connection $A$ is extended
to the {\em stochastic holonomy} of $(A, \phi) \in H_{p}$
in the following manner.
Since 
\[
 \left(A, \,C_{\gamma}^{\epsilon}(t)\right)
  = \left( (A,\phi), \,\left(I + Q_{A_{0}}^{2}\right)^{-p}
   (C_{\gamma}^{\epsilon}(t), 0) \right)_{p},
\]
by setting 
\begin{equation}\label{eqn:3.c}
 \tilde{C}^{\epsilon}_{\gamma}(t) = 
  \left(I + Q_{A_{0}}^{2}\right)^{-p}
   (C_{\gamma}^{\epsilon}(t), 0),
\end{equation}
we obtain from \eqref{eqn:2.5b} that
\begin{equation}\label{eqn:2.6}
 \big\Vert \tilde{C}^{\epsilon}_{\gamma}(t) - 
  \tilde{C}^{\epsilon}_{\gamma}(s)\big\Vert_{p} 
   \leq c_{1}(\epsilon) |t - s|.  
\end{equation}
Given $(A, \phi) \in H_{p}$, we now write 
\begin{equation}\label{eqn:2.7}
 A^{\epsilon}_{\gamma}(t) =
  \sum_{\alpha=1}^{d} \left(
    (A,\phi), \, \tilde{C}^{\epsilon}_{\gamma}(t)^{\alpha}
     \otimes E_{\alpha} \right)_{p} E_{\alpha},
\end{equation}
where $\tilde{C}^{\epsilon}_{\gamma}(t) =
\sum \tilde{C}^{\epsilon}_{\gamma}(t)^{\alpha}
\otimes E_{\alpha}$, and define
\[
 \bar{A}(t) = \int_{\gamma [0,t]} A.
\]

With these understood, 
recall that for the holonomy for a smooth connection 
$A$ around $A_0$, 
it follows from \eqref{eqn:2.5a} that,
in terms of the product integral or Chen's iterated integral
(see Theorem 4.3 of~\cite[p.\ 31]{Dollard-Friedman} and
also~\cite{Chen}), it is given by
\begin{equation}\label{eqn:2.8}
 \begin{aligned}
  \mathcal{P} & \exp \int_{\gamma} A_{0} + A  \\
  & =  I + \sum_{r=1}^{\infty} 
   \int_0^1\!\! \int_0^{t_1} \cdots \int_0^{t_{r-1}}
    d(\bar{A}_{0}+\bar{A})(t_1)d(\bar{A}_{0}+\bar{A})(t_2)
     \cdots d(\bar{A}_{0}+\bar{A})(t_r),
 \end{aligned}
\end{equation}
where $0 \leq t_{r-1} \leq \dots \leq t_{1} \leq t_{0}=1$.
Then, noting \eqref{eqn:2.6}, 
for each $(A, \phi) \in H_{p}$
we define the $\epsilon$-regularization of the holonomy by
\begin{equation}\label{eqn:2.9}
 W^{\epsilon}_{\gamma}(A) = I +
  \sum_{r=1}^{\infty} W^{\epsilon,r}_{\gamma}(A),
\end{equation}
where
\[
 W^{\epsilon,r}_{\gamma}(A) = 
  \int_0^1\!\! \int_0^{t_1} \cdots \int_0^{t_{r-1}}
   d(\bar{A}_{0} + {A}^{\epsilon}_{\gamma})(t_1)
    d(\bar{A}_{0} + {A}^{\epsilon}_{\gamma})(t_2) 
     \cdots
      d(\bar{A}_{0} + {A}^{\epsilon}_{\gamma})(t_r), 
\]
and the $\epsilon$-regularized Wilson line by
\begin{equation}\label{eqn:3.11}
 F^{\epsilon}_{A_{0}}(A) = \prod_{j=1}^{s} 
  \operatorname{Tr}_{R_j} W^{\epsilon}_{\gamma_{j}}(A),
\end{equation}
where the trace $\operatorname{Tr}$ is taken in the
irreducible representation $R_{j}$ of $G$ assigned 
to each $\gamma_{j}$.

\section{Stochastic Wilson line}
\label{sect:4}
We now proceed to extend the $\epsilon$-regularized
Wilson line $F_{A_{0}}^{\epsilon}( A)$  in \eqref{eqn:3.11}
even to an abstract Wiener space setting.
To this end, let $M$ and $G$ be as in Section~\ref{sect:3},
and denote by $H_{p}(\Omega_{+})$ the Hilbert subspace of
$L^{2}(\Omega_{+}) = L^{2}\left(
\Omega^{1}(M, \mathfrak{g}) \oplus 
\Omega^{3}(M, \mathfrak{g}) \right)$ with inner
product $(\,\ , \ )_{p}$ defined by \eqref{eqn:2.5}.
Then set $H = H_{p}(\Omega_{+})$ and 
let ($B,H,\mu$) be an {\em abstract Wiener space} 
such that $\mu$ is a Gaussian measure satisfying
\[
 \int_{B} e^{\sqrt{-1}\langle x,\,\xi \rangle}d\mu(x) = 
  e^{-{\Vert\xi\Vert^{2}_{p}}/{2}}
\]
for each $\xi \in B^{\ast}$.
Here $B$ is a real separable Banach space in which
the separable Hilbert space $H$ is continuously
and densely imbedded,
$\langle\ \,,\ \rangle$ denotes the natural pairing
of $B$ and its dual space $B^{\ast}$,
and $B^{\ast}$ is considered as 
$B^{\ast} \subset H$ under the usual identification
of $H$ with $H^{\ast}$ 
(cf.\ \cite{Malliavin-Taniguchi}).
 
We first note that the twisted Dirac operator 
$Q_{A_{0}}$ of \eqref{eqn:1.9} has  pure point spectrum,
since $Q_{A_{0}}$ is a self-adjoint elliptic operator
(cf. \cite{Gilkey}). 
Thus let
\[ 
 \lambda_{i}, \quad
  e_{i} = (e^{A}_{i}, \, e^{\phi}_{i}), \qquad
  i = 1, 2, \dots,
\]
be the eigenvalues and eigenvectors of $Q_{A_{0}}$.
Recall that by our assumption \eqref{eqn:1.4}
the eigenvectors $\{e_i\}$ form a CONS
(complete orthonormal system) of $L^{2}(\Omega_{+})$. 
If we define 
\[
 h_{j} = \left(1 + \lambda_{j}^{2}\right)^{-p/2}e_j,
  \qquad   j = 1, 2, \dots,
\]
then the set $\{h_{j}\}$ gives rise to a CONS of $H_{p}$,
so that the increasing rate of the eigenvalues of $Q_{A_{0}}$
guarantees the nuclearity of the system of semi-norms 
$\Vert \cdot \Vert_{q}, \ q = 1, 2, \dots$
(see, for instance, Lemma 1.6.3 (c) in~\cite{Gilkey}).
Hence there exists some integer $p_{0}$ independent of $p$ 
such that $B$ is realized as 
$H_{-p-p_{0}}$ (cf.\ \cite{Gelfand-Vilenkin}), 
where $H_{- q}$ is the dual space of $H_{q}$.
If we choose a sufficiently large $p$ such that $p > p_{0}$
and
\[
 \sum_{i=1}^{\infty}
  \left(1 + \lambda_{i}^{2}\right)^{-p}|\lambda_{i}|
   < \infty,
\]
if neccesary, 
then we see from \eqref{eqn:3.c} that
\[
 \tilde{C}^{\epsilon}_{\gamma}(t) \in H_{p+p_{0}} =  B^{\ast}.
\]
In what follows we take this suitable space as $B$ throughout 
the paper.

According to \eqref{eqn:2.7}, for each $\epsilon > 0$ 
and $x\in B$, we define 
\[
 x^{\epsilon}_{\gamma}(t) = \sum_{\alpha=1}^{d}
  \langle x, \, \tilde{C}^{\epsilon}_{\gamma}(t)^{\alpha}
   \otimes E_{\alpha} \rangle E_{\alpha},
\]
where $\{E_{\alpha}\}$, $1 \leq \alpha \leq d$, is a basis
of the Lie algebra $\mathfrak{g}$,
and briefly denote
\[
 x^{\epsilon, \alpha}_{\gamma}(t) = \langle x,\,
  \tilde{C}^{\epsilon}_{\gamma}(t)^{\alpha} \otimes
   E_{\alpha} \rangle,
\]
which is a Gaussian random variable such that
\begin{equation}\label{eqn:2.10}
 E\big[x^{\epsilon, \alpha}_{\gamma}(t)^{2}\big]
  = \big\Vert \tilde{C}^{\epsilon}_{\gamma}(t)^{\alpha}
   \otimes E_{\alpha}\big\Vert_{p}^{2}.
\end{equation}
Since it follows from \eqref{eqn:2.6} that 
\begin{equation}\label{eqn:2.10a}
 \left| x^{\epsilon, \alpha}_{\gamma}(t)
  - x^{\epsilon, \alpha}_{\gamma}(s)\right|
  \leq c_{1}(\epsilon)\Vert x \Vert_{B}|t - s|,
\end{equation}
the Lebesgue-Stieltjes integral
\[
 \int_{0}^{t}dx^{\epsilon}_{\gamma}(\tau)
  = \sum_{\alpha=1}^{d}\int_{0}^{t}
   dx^{\epsilon, \alpha}_{\gamma}(\tau) \cdot E_{\alpha}
\]
is well-defined.
Hence, according to \eqref{eqn:2.9}, for each $\epsilon > 0$
we define the {\em $\epsilon$-regularized stochastic holonomy}
for $x \in B$ by
\begin{equation}\label{eqn:3.1}
 W^{\epsilon}_{\gamma}(x) = 
  I + \sum_{r=1}^{\infty} W^{\epsilon,r}_{\gamma}(x), 
\end{equation}
where
\[
 W^{\epsilon,r}_{\gamma}(x) = 
  \int_0^1\!\! \int_0^{t_1} \cdots
   \int_0^{t_{r-1}}d(\bar{A}_{0} + 
    x^{\epsilon}_{\gamma})(t_1)d(\bar{A}_{0} 
     + x^{\epsilon}_{\gamma})(t_2) \cdots 
      d(\bar{A}_{0} + x^{\epsilon}_{\gamma}) (t_r) .
\]
Then the {\em $\epsilon$-regularized Wilson line} 
for $x \in B$ (cf.~\cite{Albeverio-Schaefer}) is given by
\begin{equation}\label{eqn:2.0}
 F^{\epsilon}_{A_{0}}(x) = \prod_{j=1}^{s} 
  \operatorname{Tr}_{R_j} W^{\epsilon}_{\gamma_{j}}(x).
\end{equation}

Now, we will see the well-definedness, 
the smoothness in $H$-Fr\'{e}chet differentiation and 
the integrability of the $\epsilon$-regularized Wilson line
$F^{\epsilon}_{A_0}(x)$ as an analytic function 
in the sense of Malliavin and Taniguchi~\cite{Malliavin-Taniguchi}. 
Indeed, in the representation $R_j$ of $G$ assigned to 
each loop $\gamma_j$,
if we define for a given basis $\{E_{\alpha}\}$ of 
$\mathfrak{g}$ and an $n \times n$ matrix $A = (a_{ij})$,
\begin{equation*}
 c_{E} = \max_{1 \leq \alpha \leq d}
  \Vert E_{\alpha} \Vert,  
   \quad  \Vert A \Vert = \sum_{i,j=1}^{n} |a_{ij}|,
\end{equation*}
then we have the following
\begin{lemma}\label{lem:1}\ 
For $\epsilon > 0$ and $x \in B$, define 
the $\epsilon$-regularizations $W_{\gamma}^{\epsilon}(x)$ 
and $F_{A_{0}}^{\epsilon}(x)$ by \eqref{eqn:3.1} and 
\eqref{eqn:2.0}, respectively.
Then the following hold.

$(1)$  $W^{\epsilon}_{\gamma}(x)$ is well defined and 
$C^{\infty}$ in H-Fr\'{e}chet differentiation.

$(2)$  For any positive integer $q$ we have
\[
 E\Big[\big\Vert W^{\epsilon}_{\gamma}(x)
  \big\Vert^{2q}\Big] < \infty.
\]

$(3)$ For any positive integer $q$ and positive number $s$ 
we have
\[
 \sum_{k=0}^{\infty}\frac{s^{k}}{k!}
  E\Bigg[ \Bigg( \sum_{i_1, i_2,\dots,i_k}
   \big\Vert D^{k}W^{\epsilon}_{\gamma}(x)
    (h_{i_1}, h_{i_2}, \dots, h_{i_k})
     \big\Vert^{2} \Bigg)^{q} \,\Bigg]^{{1}/{2q}}
      < \infty 
\]
and
\[
 \sum_{k=0}^{\infty} \frac{s^{k}}{k!}
  E\Bigg[ \Bigg(\sum_{i_1, i_2, \dots ,i_k}
   \big\vert D^{k}F^{\epsilon}_{A_{0}}(x)
    (h_{i_1}, h_{i_2}, \dots ,h_{i_k}) \big\vert^{2}
     \Bigg)^{q} \,\Bigg]^{{1}/{2q}}
      < \infty. 
\]
\end{lemma}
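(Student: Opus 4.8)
The goal is to prove three claims about the $\epsilon$-regularized stochastic holonomy $W^\epsilon_\gamma(x)$ and Wilson line $F^\epsilon_{A_0}(x)$: well-definedness with $C^\infty$ $H$-Fréchet differentiability, finiteness of $2q$-th moments, and summability of a generating series of moments of iterated derivatives. Let me think through the structure.

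The key underlying estimate is the Lipschitz bound (eqn 2.10a): $|x^{\epsilon,\alpha}_\gamma(t) - x^{\epsilon,\alpha}_\gamma(s)| \le c_1(\epsilon)\|x\|_B |t-s|$. This controls the total variation of the paths $x^\epsilon_\gamma(\tau)$, which is what makes the iterated integrals (product integral) converge. Let me think about the standard product integral bound.

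For the product integral $W^\epsilon_\gamma(x) = I + \sum_r W^{\epsilon,r}_\gamma(x)$, the $r$-th term is an $r$-fold iterated integral of $d(\bar{A}_0 + x^\epsilon_\gamma)$. The standard bound for iterated integrals over a simplex $0 \le t_r \le \dots \le t_1 \le 1$ gives $\|W^{\epsilon,r}_\gamma(x)\| \le V^r/r!$ where $V$ is the total variation of $\bar{A}_0 + x^\epsilon_\gamma$. Since $\bar{A}_0$ has bounded variation (bounded by $c_2(A_0)$ from 2.5a) and $x^\epsilon_\gamma$ has variation $\le c_1(\epsilon)\|x\|_B$, we get $V \le c(\epsilon)(1 + \|x\|_B)$. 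Thus the series converges absolutely and $\|W^\epsilon_\gamma(x)\| \le e^V \le e^{c(\epsilon)(1+\|x\|_B)}$.

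For well-definedness and $C^\infty$ $H$-Fréchet differentiability (1): The map $x \mapsto x^\epsilon_\gamma(t)$ is affine-linear in $x$ (a pairing with a fixed element of $B^*$), hence smooth; its derivative $D x^\epsilon_\gamma(t)(h) = \sum_\alpha \langle h, \tilde C^\epsilon_\gamma(t)^\alpha \otimes E_\alpha\rangle E_\alpha$ is independent of $x$, and all higher derivatives vanish. The iterated integrals are polynomial expressions (of increasing degree) in these linear functionals, so $W^{\epsilon,r}_\gamma$ is a polynomial of degree $r$ in $x$, smooth with derivatives given by differentiating the integrand. The point to verify is that differentiation commutes with the infinite sum — I would bound $\|D^k W^{\epsilon,r}_\gamma(x)(h_{i_1},\dots,h_{i_k})\|$ uniformly and show the differentiated series converges locally uniformly. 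Since each $D$ replaces one factor $d(\bar A_0 + x^\epsilon_\gamma)$ by $d(D x^\epsilon_\gamma(\cdot)(h_{i_j}))$, the $k$-th derivative is a sum over which $k$ of the $r$ factors are differentiated, giving a combinatorial factor $\binom{r}{k}$, and the bound $\|D^k W^{\epsilon,r}_\gamma(x)(h_{i_1},\dots,h_{i_k})\| \le \binom{r}{k} V^{r-k} \prod_j v(h_{i_j}) / (r-k)!$-type simplex bound, where $v(h) = c_1(\epsilon)\|h\|_B$ is the variation of $Dx^\epsilon_\gamma(\cdot)(h)$. Summing over $r$ again yields convergence, establishing (1) and simultaneously furnishing the pointwise estimates needed later.

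For the moment bound (2) and the generating-series bound (3): From $\|W^\epsilon_\gamma(x)\| \le e^{c(\epsilon)(1+\|x\|_B)}$ and the Fernique theorem — which guarantees $E[e^{\beta\|x\|_B^2}] < \infty$ for some $\beta > 0$, hence $E[e^{c\|x\|_B}] < \infty$ for every $c$ — we get $E[\|W^\epsilon_\gamma(x)\|^{2q}] \le E[e^{2q\,c(\epsilon)(1+\|x\|_B)}] < \infty$, proving (2); for $F^\epsilon_{A_0}$ the trace over a finite product of holonomies is bounded by a product of such exponentials, and a finite product of $L^\infty$-exponentially-bounded quantities remains integrable by Fernique. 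For (3), I would first obtain, summing the derivative estimates from step (1) over all indices, a bound of the form
\begin{equation*}
 \Big(\sum_{i_1,\dots,i_k}\|D^k W^\epsilon_\gamma(x)(h_{i_1},\dots,h_{i_k})\|^2\Big)^{1/2}
  \le \frac{(c_1(\epsilon)\,\sigma)^k}{k!}\,e^{c(\epsilon)(1+\|x\|_B)}\cdot P_k(\|x\|_B),
\end{equation*}
where $\sigma^2 = \sum_i \|h_i\|_B^2$ (finite by the nuclearity/summability hypothesis on $p$) and $P_k$ is polynomial growth from the undifferentiated factors; then taking $L^{2q}$-norms via Fernique and multiplying by $s^k/k!$ produces a series dominated by $\sum_k (s\,c_1(\epsilon)\sigma)^k/(k!)^2 \cdot C$, which converges. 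The Leibniz rule transfers the holonomy estimate to the finite product $F^\epsilon_{A_0}$, giving the second inequality of (3).

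The main obstacle I expect is bookkeeping the $k$-th Fréchet derivative of the product integral and summing its squared norm over all multi-indices $(i_1,\dots,i_k)$ with the correct combinatorial constants: one must show the simplex integration yields the $1/k!$ (and residual $1/(r-k)!$) factors, and that $\sum_i \|h_i\|_B^2 < \infty$ exactly because of the chosen large $p$, so that the cross terms from differentiating distinct factors sum to $\sigma^{k}$ rather than diverge. Once this telescoping of combinatorial factors against the simplex volumes is correctly arranged, the Gaussian integrability via Fernique is routine and the three claims follow.
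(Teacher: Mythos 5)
Your proposal is correct, and parts (1) and (2) coincide with the paper's proof: the total-variation bounds from \eqref{eqn:2.5a} and \eqref{eqn:2.10a} combined with the simplex volume $1/r!$ give $\Vert W^{\epsilon}_{\gamma}(x)\Vert \leq e^{\sigma(c_{2}(A_{0})+c_{1}(\epsilon)\Vert x\Vert_{B})}$, difference quotients plus dominated convergence give $C^{\infty}$ smoothness, and Fernique's theorem (the paper's Lemma~2) gives the moments. Where you genuinely diverge is in the summation over the CONS in part (3). The paper never needs $\sum_{i}\Vert h_{i}\Vert_{B}^{2}<\infty$: after interchanging sums via its Lemma~3 (a recursive-H\"older Minkowski-type inequality), it evaluates each CONS sum \emph{exactly} by Parseval, namely
\[
 \sum_{i}\big|\big\langle h_{i},\,
  \tilde{C}^{\epsilon,\alpha}_{\gamma}(\tau_{t+1})
   -\tilde{C}^{\epsilon,\alpha}_{\gamma}(\tau_{t})\big\rangle\big|^{2}
  = \big\Vert \tilde{C}^{\epsilon,\alpha}_{\gamma}(\tau_{t+1})
   -\tilde{C}^{\epsilon,\alpha}_{\gamma}(\tau_{t})\big\Vert_{p}^{2}
  \leq c_{1}(\epsilon)^{2}|\tau_{t+1}-\tau_{t}|^{2},
\]
since $\{h_{i}\}$ is a CONS of $H_{p}$ and the increments of $\tilde{C}^{\epsilon}_{\gamma}$ lie in $B^{\ast}\subset H$; this uses only \eqref{eqn:2.6}. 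You instead bound each pairing crudely by $c_{1}(\epsilon)\Vert h_{i_{j}}\Vert_{B}|t-s|$ via \eqref{eqn:2.10a} and then invoke $\sigma^{2}=\sum_{i}\Vert h_{i}\Vert_{B}^{2}<\infty$. This does work in the present setting, because $B$ is realized as the Hilbert space $H_{-p-p_{0}}$ coming from a nuclear rigging, so the embedding $H\hookrightarrow B$ is Hilbert--Schmidt; but it is a strictly stronger input than the paper uses, and it would fail in a general abstract Wiener space, where a CONS need not satisfy $\sum_{i}\Vert h_{i}\Vert_{B}^{2}<\infty$. The Parseval route is thus both sharper and intrinsic. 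Two minor points: your claimed factor $(c_{1}(\epsilon)\sigma)^{k}/k!$ in the derivative bound is too strong --- the correct bound from the $r!/(r-k)!$ count of differentiated slots against the $1/r!$ simplex volume is only geometric in $k$, of the form $(\sigma' c_{1}(\epsilon)\sigma)^{k}e^{\sigma'(c_{2}(A_{0})+c_{1}(\epsilon)\Vert x\Vert_{B})}$ --- but this is immaterial since the outer weight $s^{k}/k!$ guarantees convergence either way; and your ``polynomial'' factor $P_{k}(\Vert x\Vert_{B})$ is actually exponential, again absorbed by Fernique.
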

\begin{proof}\ 
First we prove $(1)$.
It follows from \eqref{eqn:2.5a} and \eqref{eqn:2.10a} that 
for any $t \geq 0$ we have
\begin{equation*}
 \left\Vert \int_0^t d\bar{A_0} \right\Vert
  \leq  \sigma c_2(A_0)\,t,
   \qquad
 \left\Vert \int_{0}^{t} dx^{\epsilon}_{\gamma}(\tau)
  \right\Vert  \leq 
   \sigma c_{1}(\epsilon)\, \Vert x \Vert_{B}\,t,
\end{equation*}
where $\sigma = d \cdot c_{E}$.
Then it is not hard to see that for $x \in B$
\begin{equation}\label{eqn:2.11}
 \begin{aligned}
  {} & \left\Vert W^{\epsilon}_{\gamma}(x) \right\Vert
    \\[0.2cm]
  & \quad \leq \sum_{r=0}^{\infty}\,
  \left\Vert \int_0^1\!\! \int_0^{t_1} \cdots \int_0^{t_{r-1}}
   d(\bar{A}_{0} + x^{\epsilon}_{\gamma})(t_{1})
    d(\bar{A}_{0} + x^{\epsilon}_{\gamma})(t_{2})
     \cdots
      d(\bar{A}_{0} + x^{\epsilon}_{\gamma})(t_{r})
       \right\Vert  \\[0.2cm]
  & \quad \leq \sum_{r=0}^{\infty}\,
   \left(\sigma(c_2(A_0) + c_1(\epsilon) \Vert x \Vert_B)\right)^r
    \int_0^1\!\! \int_0^{t_1} \cdots \int_0^{t_{r-1}}
     dt_1 dt_2 \cdots dt_r  \\[0.1cm]
  & \quad \leq \sum_{r=0}^{\infty}\,
    \left(\sigma(c_2(A_0) + c_1(\epsilon) \Vert x \Vert_B)
     \right)^r/r!  \\[0.1cm]
  & \quad = e^{\sigma(c_2(A_0) + c_1(\epsilon) \Vert x \Vert_B)},
\end{aligned}
\end{equation}
which implies the well-definedness of 
$W^{\epsilon}_{\gamma}(x)$.

To see the smoothness of $W^{\epsilon}_{\gamma}(x)$
in $H$-Fr\'{e}chet differentiation, we first note that
for $h\in H$
\begin{align*}
 D & W^{\epsilon}_{\gamma}(x)(h)
  = \lim_{s\to0} \left\{ W^{\epsilon}_{\gamma}(x + sh) - 
   W^{\epsilon}_{\gamma}(x) \right\}/{s}  \\[0.2cm]
 & = \lim_{s\to0} \frac{1}{s} \sum_{r=1}^{\infty}
  \left\{ \int_0^1\!\! \int_0^{t_1} \cdots \int_0^{t_{r-1}}
    d(\bar{A}_{0} + (x + sh)^{\epsilon}_{\gamma})(t_{1})
     \cdots
      d(\bar{A}_{0} + (x +sh)^{\epsilon}_{\gamma})(t_{r})
       \right. \\
 & \hspace{3cm} \left.
  - \int_0^1\!\! \int_0^{t_1} \cdots \int_0^{t_{r-1}}
   d(\bar{A}_{0} + x^{\epsilon}_{\gamma})(t_{1})
    \cdots d(\bar{A}_{0} + x^{\epsilon}_{\gamma})(t_{r})
     \right\}. 
\end{align*}
Then, in a manner similar to the previous estimate,
we have for $|s| \leq 1$
\begin{align*}
 \Bigg\Vert & \frac{1}{s} \sum_{r=1}^{\infty}
  \left\{ \int_0^1\!\! \int_0^{t_1} \cdots \int_0^{t_{r-1}}
    d(\bar{A}_{0} + (x + sh)^{\epsilon}_{\gamma})(t_{1})
     \cdots
      d(\bar{A}_{0} + (x +sh)^{\epsilon}_{\gamma})(t_{r})
       \right. \\
 & \hspace{3cm} \left.
  - \int_0^1\!\! \int_0^{t_1} \cdots \int_0^{t_{r-1}}
   d(\bar{A}_{0} + x^{\epsilon}_{\gamma})(t_{1})
    \cdots d(\bar{A}_{0} + x^{\epsilon}_{\gamma})(t_{r})
     \right\} \Bigg\Vert \\[0.2cm]
 & \leq \Bigg\Vert \sum_{r=1}^{\infty} \sum_{m=1}^{r}
   \int_0^1\!\! \int_0^{t_1} \cdots \int_0^{t_{r-1}}
    d(\bar{A}_{0} + x^{\epsilon}_{\gamma})(t_{1})
     \cdots d(\bar{A}_{0} + x^{\epsilon}_{\gamma})(t_{m-1})  \\
 & \hspace{3cm} \boldsymbol{\cdot}
  dh^{\epsilon}_{\gamma}(t_{m})
   d(\bar{A}_{0} + (x +sh)^{\epsilon}_{\gamma})(t_{m+1})
    \cdots d(\bar{A}_{0} + (x +sh)^{\epsilon}_{\gamma})(t_{r})
     \Bigg\Vert  \\[-0.1cm]
 & \leq \sum_{r=1}^{\infty} \sum_{m=1}^{r} \sigma^{r}
  \left(c_{2}(A_{0}) + c_{1}(\epsilon) \Vert x \Vert_{B}
   \right)^{m-1}   \\
  &  \hspace{3cm}  \times  c_{1}(\epsilon) \Vert h \Vert_{B}
    \left(c_{2}(A_{0}) + c_{1}(\epsilon)
     \{\Vert x \Vert_{B} + \Vert h \Vert_{B}\}
      \right)^{r-m}/{r!} \\[0.1cm]
 & \leq \sum_{r=1}^{\infty} \sigma^{r}
  \left(c_{2}(A_{0}) + c_{1}(\epsilon)
   \{\Vert x \Vert_{B} + \Vert h \Vert_{B}\}\right)^{r-1}
    c_{1}(\epsilon)\Vert h \Vert_{B}/{(r-1)!} \\[0.1cm]
 & = \sigma c_{1}(\epsilon) \Vert h \Vert_{B}\,
  e^{\sigma\left(c_{2}(A_{0}) + c_{1}(\epsilon) (\Vert x \Vert_{B}
   + \Vert h \Vert_{B})\right)} < \infty.
\end{align*}
This, together with Lebesgue's convergence theorem, 
implies that $W^{\epsilon}_{\gamma}(x)$ is $H$-Fr\'{e}chet
differentiable. 
Repeating this argument, we then obtain that
$W^{\epsilon}_{\gamma}(x)$ is $C^{\infty}$ 
in $H$-Fr\'{e}chet differentiation.

For the proof of $(2)$ we recall the following lemma
due to Fernique (see \cite{Kuo}).
\begin{lemma}\label{lem:2}\ 
There exists $\delta > 0$ such that
\[
 \int_{B} e^{\delta \Vert x \Vert^{2}_{B}}\mu(dx)
  < \infty.
\]
\end{lemma}
\noindent Then it follows from \eqref{eqn:2.11} that
\begin{equation*}\label{eqn:2.14}
 E\Big[\Vert W_{\gamma}(x)\Vert^{2q}\Big]
  \leq E\Big[e^{2q\sigma (c_{2}(A_{0})
   + c_{1}(\epsilon)\Vert x\Vert_{B})}\Big],
\end{equation*}
which together with Lemma~\ref{lem:2} shows 
$(2)$ of Lemma~\ref{lem:1}.

Before proceeding to the proof of $(3)$,
we remark the following
\begin{lemma}\label{lem:3}\ 
Let $q$ be a positive integer and $X_{i,j}$, 
$i,j=1,2,\dots$, be real numbers.  Then
\[
 \sum_{i} \Big| \sum_{j}X_{i,j} \Big|^{2q} 
  \leq \Bigg( \sum_{j} \Big( \sum_{i}
   \big| X_{i,j} \big|^{2q}\Big)^{{1}/{2q}}
    \Bigg)^{2q}.
\]
\end{lemma}
\noindent\textit{Proof of Lemma~$\ref{lem:3}$.}\ \ 
Note that
\[
 \Big( \sum_{j} \big| X_{i,j} \big| \Big)^{2q} 
  = \sum_{j_{1},j_{2},\dots,j_{2q}}
   \big|X_{i,j_{1}}\big| \big|X_{i,j_{2}}\big|
    \cdots \big|X_{i,j_{2q}}\big|,
\]
and by using H\"{o}lder's inequality recursively we have
\begin{align*}
 \sum_{i}\, & \big|X_{i,j_{1}}\big| \big|X_{i,j_{2}}\big|
  \cdots \big|X_{i,j_{2q}}\big| \\
 & \leq \Big( \sum_{i}\big|X_{i,j_{1}}\big|^{2q}
  \Big)^{{1}/{2q}}
  \Big( \sum_{i} \Big( \big|X_{i,j_{2}}\big|
   \cdots \big|X_{i,j_{2q}}\big| \Big)^{{2q}/{(2q-1)}}
    \Big)^{{(2q-1)}/{2q}}  \\[0.1cm]
 & \leq  \Big( \sum_{i} \big|X_{i,j_{1}}\big|^{2q}
   \Big)^{{1}/{2q}}
    \Big( \sum_{i} \big|X_{i,j_{2}}\big|^{2q}
     \Big)^{{1}/{2q}}  \\
 & \hspace{1cm}  \times \Big( \sum_{i} 
  \Big( \big|X_{i,j_{3}}\big| \cdots \big|X_{i,j_{2q}}\big|
   \Big)^{{2q}/{(2q-2)}} \Big)^{{(2q-2)}/{2q}}
    \quad \mbox{and so on.}
\end{align*}
Hence we obtain
\begin{align*}
 \sum_{i}\, & \Big| \sum_{j}X_{i,j} \Big|^{2q} 
  \leq \sum_{i} \Big( \sum_{j_{1},j_{2},\dots,j_{2q}}
   \big|X_{i,j_{1}}\big| \big|X_{i,j_{2}}\big|
    \cdots \big|X_{i,j_{2q}}\big| \Big) \\[0.1cm]
 & = \sum_{j_{1},j_{2},\dots,j_{2q}}
  \Big( \sum_{i} \big|X_{i,j_{1}}\big|
   \big|X_{i,j_{2}}\big| \cdots \big|X_{i,j_{2q}}\big|
    \Big)  \\[0.1cm]
 & \leq \sum_{j_{1},j_{2},\dots,j_{2q}}
  \Big( \sum_{i} \big|X_{i,j_{1}}\big|^{2q} \Big)^{{1}/{2q}}
   \Big( \sum_{i} \big|X_{i,j_{2}}\big|^{2q} \Big)^{{1}/{2q}}
    \cdots  \Big( \sum_{i} \big|X_{i,j_{2q}}\big|^{2q}
     \Big)^{{1}/{2q}}  \\
 & = \Bigg( \sum_{j} \Big( \sum_{i}
   \big| X_{i,j} \big|^{2q} \Big)^{{1}/{2q}}
    \Bigg)^{2q},
\end{align*}
which completes the proof of Lemma~\ref{lem:3}.

\medskip
Now we proceed to proving $(3)$ of Lemma~\ref{lem:1}.
Noting that
\begin{align*}
 & \sum_{i_1,i_2,\dots,i_k} \big\Vert 
  D^{k} W^{\epsilon}_{\gamma}(x)
   (h_{i_1},h_{i_2},\dots,h_{i_k}) \big\Vert^2  \\
 & \qquad \leq \sum_{i_1,i_2,\dots,i_k}
  \Big( \sum_{r=k}^{\infty} \big\Vert
   D^{k} W_{\gamma}^{\epsilon,r}(x)
    (h_{i_1},h_{i_2},\dots,h_{i_k}) \big\Vert \Big)^{2}, 
\end{align*}
and by making use of Lemma~\ref{lem:3} recursively, 
it is immediate to see that the right side of the above 
inequality is dominated by
\[
 \Bigg( \sum_{r=k}^{\infty} \Big(
  \sum_{i_1,i_2,\dots,i_k} \big\Vert 
   D^{k} W_{\gamma}^{\epsilon,r}(x)
    (h_{i_1},h_{i_2},\dots,h_{i_k}) \big\Vert^2 \Big)^{1/2}
     \Bigg)^{2}.
\]
Let us denote for simplicity
\[
 \sum_{\stackrel{\scriptstyle 1\leq l_{1} < l_{2} < \dots < l_{k} \leq r,}
  {\{j(l_{1}), j(l_{2}), \dots, j(l_{k})\}
   = \{1,2,\dots,k\}}}
 \qquad \mbox{by} \qquad
 \sum_{l_{1}, l_{2}, \dots ,l_{k}}.
\]
Then, employing Lemma~\ref{lem:3} again, we see that 
\begin{align*}
 {} & \sum_{i_1,i_2,\dots,i_k} \big\Vert 
  D^{k}W_{\gamma}^{\epsilon,r}(x)
   (h_{i_1}, h_{i_2}, \dots, h_{i_k}) \big\Vert^2  \\
 & \qquad = \sum_{i_1,i_2,\dots,i_k}
  \Bigg\Vert \sum_{l_{1},l_{2},\dots,l_{k}}\,
   \int_0^1 d(\bar{A}_{0} + x^{\epsilon}_{\gamma})(t_{1})
    \cdots \!\int_0^{t_{l_1 -1}}
     d h_{i_{j(l_{1})}}^{\epsilon}(t_{l_{1}})
      \cdots  \\
 & \hspace{4.2cm}  \boldsymbol{\cdot} 
  \int_0^{t_{l_{k} -1}} d h_{i_{j(l_{k})}}^{\epsilon}
   (t_{l_{k}}) \cdots \!\int_0^{t_{r-1}}
      d(\bar{A}_{0} + x^{\epsilon}_{\gamma})
       (t_{r}) \Bigg\Vert^{2}  \\
 & \qquad \leq \sum_{i_1,i_2,\dots,i_k}
  \Bigg( c_{E}^{r} \sum_{ l_{1},l_{2},\dots ,l_{k}}\,
   \sum_{\alpha_1,\alpha_2,\dots,\alpha_r =1}^{d}
    \Bigg| \int_0^1 d(\bar{A}^{\alpha_1}_{0} +  
     x^{\epsilon,\alpha_{1}}_{\gamma})(t_{1}) \cdots  \\
 & \hspace{3cm} \boldsymbol{\cdot} \int_0^{t_{l_1 -1}}
  d \big\langle h_{i_{j(l_{1})}},\,
   \tilde{C}^{\epsilon, \alpha_{l_{1}}}_{\gamma}(t_{l_{1}})
    \big\rangle \cdots \!\int_0^{t_{l_{k} -1}}
     d \big\langle h_{i_{j(l_{k})}},\,
      \tilde{C}^{\epsilon, \alpha_{l_{k}}}_{\gamma}(t_{l_{k}})
       \big\rangle  \\
 & \hspace{3cm} \boldsymbol{\cdot} \cdots \!\int_0^{t_{r-1}}
  d(\bar{A}^{\alpha_r}_{0} +
   x^{\epsilon,\alpha_{r}}_{\gamma})(t_{r}) \Bigg|
    \Bigg)^{2}  \\
 & \qquad \leq \left( c_{E}^{r} \sum_{ l_{1},l_{2},\dots,l_{k}}\,
  \sum_{\alpha_1,\alpha_2,\dots,\alpha_r =1}^{d}
   \Bigg( \sum_{i_1,i_2,\dots,i_k} 
    \Bigg| \int_0^1 d(\bar{A}^{\alpha_1}_{0} +  
     x^{\epsilon, \alpha_{1}}_{\gamma})(t_{1})
      \cdots \right.\\
 & \hspace{3cm} \boldsymbol{\cdot} \int_0^{t_{l_1 -1}}
  d \big\langle h_{i_{j(l_{1})}},\,
   \tilde{C}^{\epsilon,\alpha_{l_{1}}}_{\gamma}(t_{l_{1}})
    \big\rangle \cdots \!\int_0^{t_{l_{k} -1}}
     d \big\langle h_{i_{j(l_{k})}},\,
      \tilde{C}^{\epsilon,\alpha_{l_{k}}}_{\gamma}(t_{l_{k}})
       \big\rangle  \\
 & \hspace{3cm} \left. \boldsymbol{\cdot} \cdots 
  \!\int_0^{t_{r-1}} d(\bar{A}^{\alpha_r}_{0} +
   x^{\epsilon,\alpha_{r}}_{\gamma})(t_{r}) \Bigg|^{2}
    \Bigg)^{1/2} \right)^{2},
\end{align*}
where we write 
$\tilde{C}^{\epsilon, \alpha}_{\gamma}(t) =
\tilde{C}^{\epsilon}_{\gamma}(t)^{\alpha}
\otimes E_{\alpha}$
for simplicity.

Noticing that, for example,
\begin{align*}
 & \sum_{i_{j}} \Bigg| \int_0^{s}
  d \big\langle h_{i_{j}},\,
   \tilde{C}^{\epsilon,\alpha}_{\gamma}(v) \big\rangle
    \int_0^{v} d(\bar{A}^{\beta}_{0} +
     x^{\epsilon,\beta}_{\gamma})(w) \Bigg|^{2}  \\
 & \quad = \sum_{i_{j}} \Bigg| \lim_{m \to \infty}
  \sum_{t=0}^{m}\, \big\langle
   h_{i_{j}},\, \tilde{C}^{\epsilon,\alpha}_{\gamma}(\tau_{t+1})
    - \tilde{C}^{\epsilon,\alpha}_{\gamma}(\tau_{t}) \big\rangle
     \int_0^{\tau_{t}} d(\bar{A}^{\beta}_{0} + 
      x^{\epsilon,\beta}_{\gamma})(w) \Bigg|^{2}  \\
 & \quad \leq \left( \lim_{m \to \infty}
  \sum_{t=0}^{m} \Bigg( \sum_{i_{j}}\,
   \Big| \big\langle h_{i_{j}},\,
    \tilde{C}^{\epsilon,\alpha}_{\gamma}(\tau_{t+1})
     - \tilde{C}^{\epsilon,\alpha}_{\gamma}(\tau_{t}) \big\rangle
      \Big|^{2} \ \Bigg| \int_0^{\tau_{t}}
       d(\bar{A}^{\beta}_{0} +
         x^{\epsilon,\beta}_{\gamma})(w) \Bigg|^{2}
          \Bigg)^{{1}/{2}} \right)^{2}  \\[0.1cm]
 & \quad \leq \Bigg( \lim_{m\to\infty}
  \sum_{t=0}^{m}\, \big\Vert
   \tilde{C}^{\epsilon,\alpha}_{\gamma}(\tau_{t+1})
    - \tilde{C}^{\epsilon,\alpha}_{\gamma}(\tau_{t}) \big\Vert_{p}
    \ \Bigg| \int_0^{\tau_{t}} d(\bar{A}^{\beta}_{0} + 
     x^{\epsilon,\beta}_{\gamma})(w) \Bigg| \Bigg)^{2}
  \\[0.1cm]
 & \quad \leq \left( c_{1}(\epsilon) \big(c_{2}(A_{0}) 
  + c_{1}(\epsilon)\Vert x \Vert_{B} \big)
   \int_0^{s}\!\! \int_0^{v} dvdw \right)^{2},
\end{align*}
we obtain as in the proof of \eqref{eqn:2.11} that
\begin{align*}
 & \sum_{i_1,i_2,\dots,i_k} \big\Vert 
  D^{k}W_{\gamma}^{\epsilon,r}(x)
   (h_{i_1},h_{i_2},\dots,h_{i_k}) \big\Vert^2  \\[0.1cm]
 & \qquad \leq \Bigg( \sigma^{r} \frac{r!}{(r-k)!} \big(c_{2}(A_{0})
  + c_{1}(\epsilon) \Vert x \Vert_{B}\big)^{r-k}
   c_{1}(\epsilon)^{k} \\
 & \hspace{1.7cm} \times \int_0^1 \cdots \int_0^{t_{l_1 -1}}
  \cdots \int_0^{t_{l_{k} -1}} \cdots \int_0^{t_{r-1}}
   dt_{1} \cdots dt_{l_1 -1} \cdots dt_{l_{k} -1}
    \cdots dt_{r} \Bigg)^{2}  \\
 & \qquad \leq \Bigg( \sigma^{r} \frac{r!}{(r-k)!r!}
  \big(c_{2}(A_{0}) + c_{1}(\epsilon)
   \Vert x \Vert_{B} \big)^{r-k}
    c_{1}(\epsilon)^{k} \Bigg)^{2}.
\end{align*}
Hence, noting that
\begin{align*}
 & \sum_{r=k}^{\infty} \sigma^{r} \frac{1}{(r-k)!}
  \left( c_{2}(A_{0}) + c_{1}(\epsilon)
   \Vert x \Vert_{B} \right)^{r-k} c_{1}(\epsilon)^{k}  \\
 & \quad  =  \sum_{r=0}^{\infty} \sigma^{r+k}
  \frac{1}{r!} \left( c_{2}(A_{0}) + c_{1}(\epsilon)
   \Vert x \Vert_{B} \right)^{r} c_{1}(\epsilon)^{k}  \\
  &  \quad  = (\sigma c_{1}(\epsilon))^{k}\,
   e^{\sigma \left(c_{2}(A_{0}) +
    c_{1}(\epsilon)\Vert x \Vert_{B} \right)},
\end{align*}
we see with Lemma~\ref{lem:2} that
\begin{align*}
 & \sum_{k=0}^{\infty} \frac{s^{k}}{k!}
  E \left[\Bigg( \sum_{i_1,i_2,\dots,i_k} \big\Vert 
   D^{k}W^{\epsilon}_{\gamma}(x)
    (h_{i_1},h_{i_2},\dots,h_{i_k}) \big\Vert^{2}
     \Bigg)^{q} \right]^{{1}/{2q}}  \\[0.2cm]
 & \quad \leq \sum_{k=0}^{\infty} \frac{s^{k}}{k!}
  \left(\sigma c_{1}(\epsilon)\right)^{k}
   E \left[ e^{2q\sigma\left(c_{2}(A_{0}) + c_{1}(\epsilon)
    \Vert x \Vert_{B}\right)} \right]^{{1}/{2q}}
     < \infty,
\end{align*}
which verifies the first part of (3).
 
By a similar argument we can also obtain the second half of (3),
so is omitted the detail.
\end{proof}

\section{Definition and Expansion theorem}
\label{sect:5}
The aim of this section is to give a rigorous mathematical 
meaning, in an abstract Wiener space setting, 
to the normalized one-loop approximation of the Lorentz 
gauge-fixed Chern-Simons integral \eqref{eqn:1.10}.
We keep the notation in Section~\ref{sect:4}.

First, recall that for each
$x = (A, \, \phi) \in L^2(\Omega_{+}) = L^2(\Omega^1 
\oplus \Omega^3)$ 
we have
\begin{equation*}
 (x,Q_{A_{0}}x)_{+} = \sum_{i=1}^{\infty} 
  \lambda_{i}(x, e_{i})_{+}^{2}
   = \sum_{j=1}^{\infty} 
    \left(1 + \lambda_{j}^{2}\right)^{-p}\lambda_{j}
     (x,h_{j})_{p}^{2}.
\end{equation*}
Then, adopting an idea due to It\^{o}~\cite{Ito},
we implement convergent factors
\begin{equation*}
 \exp\left[- \frac{(x, x)}{2n} \right]
  \quad  \mathrm{with}  \quad  n > 0
\end{equation*}
into each finite dimensional approximation of $L^{2}(\Omega_{+})$.
This leads us to the following $m$-dimensional approximation 
of \eqref{eqn:1.10} written as
\begin{equation*}
 \lim_{n\to\infty} \frac{1}{Z_{m,n}}
   \int_{\boldsymbol{R}^{m}}
    F^{\epsilon}_{A_{0}}(x_{m})
     \exp\left[ \sqrt{-1}k\, (x, Qx)_{m,+}
      - \frac{(x, x)_{m}}{2n} \right]
       \frac{\mu_{m}(dx)}{\left(\sqrt{2\pi}\right)^{m}}, 
\end{equation*}
where $\mu_{m}$ is the $m$-dimensional Lebesgue measure,
\begin{gather*}
 x_{m} = \sum_{j=1}^{m}x_{j}{h}_{j},  \quad 
 (x, Qx)_{m,+} = \sum_{j=1}^{m}
  \left(1 + \lambda_{j}^{2}\right)^{-p}\lambda_{j}x_{j}^{2},
   \quad  (x, x)_{m} = \sum_{j=1}^{m} x_{j}^{2}
\end{gather*}
and
\begin{equation*}
 Z_{m,n} = \int_{\boldsymbol{R}^{m}}
   \exp\left[\sqrt{-1}k(x, Qx)_{m,+} 
    - \frac{(x, x)_{m}}{2n}\right]
     \frac{\mu_{m}(dx)}{\left(\sqrt{2\pi}\right)^{m}}.
\end{equation*}
Note that, by setting $x = \sqrt{n}y$, this can be rewritten
in the form
\begin{equation*}
\begin{aligned}
 \lim_{n \to \infty} \frac{1}{Z_{m,n}}
   \int_{\boldsymbol{R}^{m}}
    F^{\epsilon}_{A_{0}}(\sqrt{n}y_{m})
     &  \exp\left[ \sqrt{-1}k
      (\sqrt{n}y, Q\sqrt{n}y)_{m,+} \right]  \\
  &  \ \qquad  \times  \frac{1}{\left(\sqrt{2\pi}\right)^{m}}
    \exp\left[ - \frac{(y, y)_{m}}{2} \right]
     \mu_{m}(dy),
\end{aligned}
\end{equation*}
where
\begin{equation*}
 Z_{m,n} = \int_{\boldsymbol{R}^{m}}
  \exp\left[\sqrt{-1}k
    (\sqrt{n}y, Q\sqrt{n}y)_{m,+} \right]
     \frac{1}{\left(\sqrt{2\pi}\right)^{m}}
      \exp\left[ - \frac{(y, y)_{m}}{2} \right]
       \mu_{m}(dy).
\end{equation*}
We then consider
\begin{equation}\label{eqn:5.1}
\begin{aligned}
 \lim_{n \to \infty} \lim_{m \to \infty}
  \frac{1}{Z_{m,n}} \int_{\boldsymbol{R}^{m}}
    F^{\epsilon}_{A_{0}}(\sqrt{n}y_{m})
     &  \exp\left[\sqrt{-1}k
      (\sqrt{n}y, Q\sqrt{n}y)_{m,+} \right]  \\
  &  \ \qquad  \times  \frac{1}{\left(\sqrt{2\pi}\right)^{m}}
    \exp\left[- \frac{(y, y)_{m}}{2} \right]
     \mu_{m}(dy).
\end{aligned}
\end{equation}
However, the canonical Gaussian measure cannot be defined
on the Hilbert space $L^{2}(\Omega_{+})$,
so that we are going to achieve a realization of \eqref{eqn:5.1}
in an abstract Wiener space setting.

Thus, let $H = H_{p}$ and ($B,H,\mu$) the abstract Wiener 
space described in Section \ref{sect:4}.
Then, within this framework, we now define the {\em normalized
one-loop approximation of the perturbative Chern-Simons integral} 
of the $\epsilon$-regularized Wilson line to be
\begin{equation}\label{eqn:5.0}
 I_{CS}\big( F^{\epsilon}_{A_{0}} \big)
  = \limsup_{n \to \infty} \frac{1}{Z_n}\int_{B}
   F_{A_{0}}^{\epsilon} \big(\sqrt{n}x \big)
    e^{\sqrt{-1}k CS(\sqrt{n}x)} \mu(dx), 
\end{equation}
where
\begin{gather*}
 Z_n  = \int_{B} \,e^{\sqrt{-1}k CS(\sqrt{n}x)}
  \mu (dx), \\
 CS(x) = \big\langle x,
  \,\left(I + Q_{A_{0}}^{2}\right)^{-p}
   Q_{A_{0}} x \big\rangle
    = \sum_{j=1}^{\infty}
     \left(1 + \lambda_{j}^{2}\right)^{-p}
      \lambda_{j} \langle x, h_{j} \rangle^{2},
\end{gather*}
and
\[
 \limsup_{n \to \infty}(x_n + \sqrt{-1}y_n )
  = \limsup_{n \to \infty}x_n + \sqrt{-1}
   \limsup_{n \to \infty}y_n
\]
for real numbers $x_n$ and $y_n$.

Given $\epsilon > 0$, we also set 
\begin{align*}
 Z^{\epsilon ,0}_{\gamma}(0) & = I,  \\
 Z^{\epsilon ,r}_{\gamma}(i) & = 
  \sum_{1\leq l_1 < l_2 < \cdots < l_{i}\leq r}
   \int_0^1 d\bar{A}_{0}(t_1) \cdots \!
    \int_0^{t_{l_1 -1}} dx^{\epsilon}_{\gamma}(t_{l_1})
     \cdots \! \int_0^{t_{l_{i} -1}}
      dx^{\epsilon}_{\gamma}(t_{l_{i}}) \cdots \!
       \int_0^{t_{r-1}}d\bar{A}_{0}(t_r)
\end{align*}
and
\begin{equation*}
 Z^{\epsilon}_{\gamma}(i) = \sum_{r=i}^{\infty}
  Z^{\epsilon ,r}_{\gamma}(i).
\end{equation*}
It should be noted that
\begin{align*}
 W^{\epsilon,r}_{\gamma}(x)  & = 
  \int_0^1\!\! \int_0^{t_1} \cdots \int_0^{t_{r-1}}
   d(\bar{A}_{0} + x^{\epsilon}_{\gamma})(t_1)
    d(\bar{A}_{0} + x^{\epsilon}_{\gamma})(t_2)
     \cdots d(\bar{A}_{0} + x^{\epsilon}_{\gamma})(t_r)
  \\[0.1cm]
 & = \sum_{i=0}^{r} Z^{\epsilon,r}_{\gamma}(i),
\end{align*}
which combined with \eqref{eqn:3.1} yields
\[
 W^{\epsilon}_{\gamma}(x) = I + \sum_{r=1}^{\infty}
  W^{\epsilon,r}_{\gamma}(x)
   = \sum_{i=0}^{\infty}Z^{\epsilon}_{\gamma}(i).
\]

Thus we define
\begin{equation}\label{eqn:3.2}
 F_{A_{0}}^{\epsilon,m}(x)   
  =\sum_{i_{1}+i_{2}+ \cdots +i_{s}=m}
   \ \prod_{j=1}^{s} \operatorname{Tr}_{R_{j}}
    Z^{\epsilon}_{\gamma_{j}}(i_{j})
\end{equation}
and set
\begin{equation}\label{eqn:5.4}
 R_{n,k} = \left\{ I - 2 \sqrt{-1}nk
  \big(I + Q_{A_{0}}^{2}\big)^{-p} Q_{A_{0}}
   \right\}^{-1/2}\sqrt{n}I.
\end{equation}
Then, by applying the formula due to Malliavin and 
Taniguchi~\cite[Theorem 7.8]{Malliavin-Taniguchi}, 
we obtain the following expansion theorem.
\begin{theorem}\label{thm:1}\  
For any fixed $\epsilon > 0$ and positive integer $N$,
\begin{equation*}\label{eqn:3.5}
\begin{aligned}
 I_{CS}(F^{\epsilon}_{A_{0}})
 & = \limsup_{n \to \infty} \int_{B}
  F^{\epsilon}_{A_{0}}\big(R_{n,k}x\big)\mu (dx)
  = \int_{B} F^{\epsilon}_{A_{0}}\big(R_{k}x\big)\mu (dx)
  \\[0.2cm]
 & = \sum_{m<N} k^{-m/2} \cdot J^{\epsilon,m}_{CS}
  + O\big(k^{-N/2}\big),
\end{aligned}
\end{equation*}
where
\begin{equation}\label{eqn:3.7}
 R_{k} = \left\{ - 2\sqrt{-1}k
  \big(I + Q_{A_{0}}^{2}\big)^{-p}
   Q_{A_{0}} \right\}^{-1/2},\end{equation}
and
\begin{equation*}
 J^{\epsilon,m}_{CS} 
  = k^{m/2} \cdot
   \int_{B}F^{\epsilon,m}_{A_{0}}\big(R_{k}x\big)
    \mu(dx).
\end{equation*}
\end{theorem}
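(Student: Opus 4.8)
The plan is to derive the theorem in three stages: first convert the regularized oscillatory integral in \eqref{eqn:5.0} into an ordinary Gaussian integral over a complex scaling by invoking the Malliavin--Taniguchi formula \cite[Theorem 7.8]{Malliavin-Taniguchi}; then pass to the limit $n\to\infty$ to replace the scaling $R_{n,k}$ of \eqref{eqn:5.4} by $R_{k}$ of \eqref{eqn:3.7}; and finally expand the integrand in powers of $k^{-1/2}$ using the homogeneity of the components $F^{\epsilon,m}_{A_{0}}$ of \eqref{eqn:3.2}.

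For the first stage I would check that the hypotheses of \cite[Theorem 7.8]{Malliavin-Taniguchi} are met. The phase $CS$ is the quadratic form of the self-adjoint operator $(I+Q_{A_{0}}^{2})^{-p}Q_{A_{0}}$, diagonalized by the CONS $\{h_{j}\}$ with eigenvalues $\nu_{j}=(1+\lambda_{j}^{2})^{-p}\lambda_{j}$; the summability $\sum_{j}(1+\lambda_{j}^{2})^{-p}|\lambda_{j}|<\infty$ makes this operator trace class, placing the phase in the admissible class. The integrand $F^{\epsilon}_{A_{0}}$ is, by Lemma~\ref{lem:1}, a well-defined $HC^{\infty}$-functional whose $H$-Fr\'{e}chet derivatives have the exponential integrability the formula requires. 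Completing the square in each coordinate $\langle x,h_{j}\rangle$ by deformation of the contour, with the branch of the square root fixed as in Section~\ref{sect:1}, then gives for each $n$ the exact identity
\[
 \frac{1}{Z_{n}}\int_{B}F^{\epsilon}_{A_{0}}(\sqrt{n}x)\,
  e^{\sqrt{-1}kCS(\sqrt{n}x)}\mu(dx)
   =\int_{B}F^{\epsilon}_{A_{0}}(R_{n,k}x)\,\mu(dx),
\]
since the combined effect of the scaling $x\mapsto\sqrt{n}x$ and the contour deformation on the $j$-th mode is multiplication by $\rho_{n,j}=\sqrt{n}\{1-2\sqrt{-1}nk\nu_{j}\}^{-1/2}$. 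Taking $\limsup_{n}$ of both sides yields the first displayed equality of the theorem.

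For the second stage I would let $n\to\infty$. A direct computation gives $\rho_{n,j}\to\rho_{j}:=\{-2\sqrt{-1}k\nu_{j}\}^{-1/2}$ for each $j$, so $R_{n,k}$ converges modewise to $R_{k}$, and since the regularized currents $\tilde{C}^{\epsilon}_{\gamma}(t)$ are smooth one checks that $R_{k}\tilde{C}^{\epsilon}_{\gamma}(t)\in H$, whence $F^{\epsilon}_{A_{0}}(R_{k}\,\cdot\,)$ is a bona fide measurable functional. I expect this passage to the limit to be the main obstacle: although $\rho_{n,j}\to\rho_{j}$ modewise, the factors grow like $\sqrt{n}$ on the high-frequency modes where $\nu_{j}\to0$, so $R_{n,k}$ is \emph{not} uniformly bounded on $H$. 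The resolution is that $F^{\epsilon}_{A_{0}}(R_{n,k}x)$ depends on $x$ only through its pairings against the fixed smooth currents $\tilde{C}^{\epsilon}_{\gamma_{j}}(t)$, for which the modewise monotonicity $|\rho_{n,j}|\le|\rho_{j}|$ furnishes the uniform-in-$n$ bound $\|R_{n,k}\tilde{C}^{\epsilon}_{\gamma}(t)\|_{p}\le\|R_{k}\tilde{C}^{\epsilon}_{\gamma}(t)\|_{p}$; combined with the estimate \eqref{eqn:2.11} and Fernique's Lemma~\ref{lem:2}, this supplies an integrable dominating function independent of $n$, so that dominated convergence turns the $\limsup$ into a genuine limit equal to $\int_{B}F^{\epsilon}_{A_{0}}(R_{k}x)\mu(dx)$.

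For the third stage I would use the decomposition $W^{\epsilon}_{\gamma}(x)=\sum_{i\ge0}Z^{\epsilon}_{\gamma}(i)$, in which $Z^{\epsilon}_{\gamma}(i)$ is homogeneous of degree $i$ in $x$, to write $F^{\epsilon}_{A_{0}}=\sum_{m\ge0}F^{\epsilon,m}_{A_{0}}$ with $F^{\epsilon,m}_{A_{0}}$ homogeneous of degree $m$. Because $k>0$, the scaling factors off as $R_{k}=k^{-1/2}\{-2\sqrt{-1}(I+Q_{A_{0}}^{2})^{-p}Q_{A_{0}}\}^{-1/2}$, so homogeneity gives $F^{\epsilon,m}_{A_{0}}(R_{k}x)=k^{-m/2}F^{\epsilon,m}_{A_{0}}(\tilde{R}x)$ with $\tilde{R}$ independent of $k$; hence $J^{\epsilon,m}_{CS}=\int_{B}F^{\epsilon,m}_{A_{0}}(\tilde{R}x)\mu(dx)$ is $O(1)$. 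Integrating term by term yields $\sum_{m}k^{-m/2}J^{\epsilon,m}_{CS}$, and truncating at $m<N$ leaves the tail $\sum_{m\ge N}k^{-m/2}J^{\epsilon,m}_{CS}$, which is $O(k^{-N/2})$ for the integer $k\ge1$ once one knows $\sum_{m}\int_{B}|F^{\epsilon,m}_{A_{0}}(\tilde{R}x)|\,\mu(dx)<\infty$; this last bound follows from the same estimates that establish Lemma~\ref{lem:1}\,(3).
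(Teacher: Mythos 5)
Your Stage 1 and Stage 3 track the paper's Step 1 and Step 5 faithfully: the Malliavin--Taniguchi formula gives $I_{CS}(F^{\epsilon}_{A_{0}})=\limsup_{n\to\infty}\int_{B}F^{\epsilon}_{A_{0}}(R_{n,k}x)\,\mu(dx)$, and the factorization $R_{k}=k^{-1/2}\tilde{R}$ together with the homogeneity of $Z^{\epsilon}_{\gamma}(i)$ makes $J^{\epsilon,m}_{CS}$ independent of $k$, with the tail sum controlled as in the paper's Lemma~\ref{lem:4} (where each degree of homogeneity in $x$ contributes a factor of order $k^{-1/2}$, cf.\ \eqref{eqn:5.21}). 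Your modewise inequality $|\rho_{n,j}|\le|\rho_{j}|$ is correct and is exactly the paper's uniform estimate \eqref{eqn:5.12}.

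The genuine gap is in Stage 2, which is precisely where the paper invests its Steps 2--4. Two distinct problems. First, well-definedness of $F^{\epsilon}_{A_{0}}(R_{k}x)$ is not mere measurability: since $R_{k}$ carries $B^{\ast}$ into $H$ but not into $B^{\ast}$, each $R_{k}x^{\epsilon,\alpha}_{\gamma}(t)=\langle x,\,R_{k}\tilde{C}^{\epsilon}_{\gamma}(t)^{\alpha}\otimes E_{\alpha}\rangle$ exists only as an $L^{2}(\mu)$-limit, separately for each $t$ (cf.\ \eqref{eqn:5.14}), whereas the iterated Lebesgue--Stieltjes integrals $W^{\epsilon,r}_{\gamma}(R_{k}x)$ require the path $t\mapsto R_{k}x^{\epsilon,\alpha}_{\gamma}(t)$ to admit a continuous modification of bounded variation $\mu$-almost surely; the paper secures this via the Kolmogorov--Delporte criterion and a dyadic total-variation estimate in Step 3, and smoothness of $\tilde{C}^{\epsilon}_{\gamma}(t)$ alone does not yield it. Second, your dominated convergence argument cannot run as cited: there is no $n$-independent \emph{pathwise} dominating function. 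The Lipschitz bound \eqref{eqn:2.10a} feeding \eqref{eqn:2.11} uses the $B^{\ast}$-norm of increments of $\tilde{C}^{\epsilon}_{\gamma}$, and for $R_{n,k}\tilde{C}^{\epsilon}_{\gamma}$ those norms grow like $\sqrt{n}$ on high modes, so the Fernique-based bound $e^{c\Vert x\Vert_{B}}$ degenerates with $n$; your inequality $\Vert R_{n,k}\tilde{C}^{\epsilon}_{\gamma}(t)\Vert_{p}\le\Vert R_{k}\tilde{C}^{\epsilon}_{\gamma}(t)\Vert_{p}$ controls only the \emph{variances} of the Gaussian increments, not the paths. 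Moreover you never establish $\mu$-a.e.\ convergence of $F^{\epsilon}_{A_{0}}(R_{n,k}x)$ to $F^{\epsilon}_{A_{0}}(R_{k}x)$, which dominated convergence also needs (the convergence $R^{\nu}_{n,k}\tilde{C}\to R^{\nu}_{k}\tilde{C}$ in $H$ gives only $L^{2}$-convergence per fixed $t$). The paper's resolution is to carry out all uniform control in expectation: combining \eqref{eqn:5.12} with the Wick formula (Lemma~\ref{lem:5}) gives the $n$-independent moment bounds \eqref{eqn:5.13}, and then the $L^{2q}$-convergence \eqref{eqn:5.20} is proved directly by estimating the differences $D^{r,m}\big[R^{\nu}_{n,k}x,R^{\nu}_{k}x\big]$. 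Your sketch of Stage 2 would have to be replaced by this moment-based argument for the proof to close.
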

\begin{proof}\ 
\textit{Step $1$}.  
By making use of the so-called Fresnel integral formula
\[
 \frac{1}{\sqrt{2\pi}}\int_{-\infty}^{+\infty}
  \exp\left[-\frac{zx^2}{2}\right]dx
   = \frac{1}{\sqrt{z}}, 
   \quad  z \in \boldsymbol{C}
\]
separately, we obtain 
\[
 Z_{n} = \left[\det \left\{I - 2\sqrt{-1}nk
  \big(I + Q_{A_{0}}^{2} \big)^{-p} Q_{A_{0}}
   \right\}\right]^{-1/2}.
\]
Also, it follows from \eqref{eqn:2.6} that
\[
 \big\Vert \sqrt{n}\big(
  \tilde{C}^{\epsilon}_{\gamma}(t)
   - \tilde{C}^{\epsilon}_{\gamma}(s)\big)
   \big\Vert_{p}
 \leq c_{3}(\epsilon) |t - s|.
\]
Hence, by mimicing the proof of $(3)$ of Lemma~\ref{lem:1},
we see that for any sufficiently small fixed $\epsilon > 0$,
the same inequalities in the course of the proof hold with 
$W^{\epsilon}_{\gamma}(x)$ being replaced by 
$W^{\epsilon}_{\gamma}(\sqrt{n}x)$.
This, together with $(1)$ of Lemma~\ref{lem:1}, then yields
that
\begin{equation*}
 \sum_{k=0}^{\infty}\frac{s^{k}}{k!}
  E\Bigg[\Bigg(\sum_{i_1,i_2,...,i_k}
   \big\vert D^{k}F^{\epsilon}_{A_{0}}\big(\sqrt{n}x\big)
    (h_{i_1},h_{i_2},...,h_{i_k}) \big\vert^{2}
     \Bigg)^{q} \,\Bigg]^{{1}/{2q}} <  \infty
\end{equation*}
for any positive number $s$, implying the analyticity of 
$F^{\epsilon}_{A_{0}}(\sqrt{n}x)$.

Therefore, we can apply the formula of 
Malliavin-Taniguchi~\cite[Theorem 7.8]{Malliavin-Taniguchi}
to the right side of \eqref{eqn:5.0} to obtain, 
for any sufficiently small fixed $\epsilon > 0$, that
\begin{equation}\label{eqn:5.6}
 I_{CS}\big(F^{\epsilon}_{A_{0}}\big)
  = \limsup_{n \to \infty}
   \int_{B}F^{\epsilon}_{A_{0}}\big(R_{n,k}x\big)
    \mu(dx).
\end{equation}

\textit{Step $2$}.  
In order to determine the limit in \eqref{eqn:5.6}, 
we first note that for any positive integer $q$ we have
\begin{equation}\label{eqn:5.9}
 E\Big[\big\Vert W^{\epsilon}_{\gamma}(R_{n,k}x)
  \big\Vert^{2q}\Big] < \infty.
\end{equation}
To see this and for later use as well, we now carry out 
a more precise estimate than that of proving (2) of 
Lemma~\ref{lem:1} in the following way.

For the twisted Dirac operator $Q_{A_{0}}$, we define 
$a_{n,k}^{j}, \,b_{n,k}^{j} \in \boldsymbol{R}$ by
\begin{equation*}
 a_{n,k}^{j} + \sqrt{-1} b_{n,k}^{j} =
  \frac{\sqrt{n}}{\sqrt{1 - 2\sqrt{\smash[b]{-1}}nk
   \big(1 + \lambda_{j}^{2}\big)^{-p} \lambda_{j}}},
\end{equation*}
where $\lambda_{j}$ are eigenvalues of $Q_{A_{0}}$
as above.
Then we set
\begin{align*}
 R_{n,k}^{1} \tilde{C}^{\epsilon}_{\gamma}(t)^{\alpha}
  \otimes E_{\alpha} 
  & = \sum_{j=1}^{\infty} a_{n,k}^{j}
  \big( \tilde{C}^{\epsilon}_{\gamma}(t)^{\alpha}
   \otimes E_{\alpha}, \,h_{j} \big)_{p} h_{j},  \\
 R_{n,k}^{2} \tilde{C}^{\epsilon}_{\gamma}(t)^{\alpha}
  \otimes E_{\alpha} 
  & = \sum_{j=1}^{\infty} b_{n,k}^{j}
  \big( \tilde{C}^{\epsilon}_{\gamma}(t)^{\alpha}
   \otimes E_{\alpha}, \,h_{j} \big)_{p} h_{j}.
\end{align*}
Note that, for each $x \in B$ and $t \in [0, 1]$,
the operator $R_{n,k}$ defined by \eqref{eqn:5.4} gives rise to 
an element
\begin{equation}\label{eqn:5.10}
 R_{n,k}x^{\epsilon}_{\gamma}(t)
  = \sum_{\alpha=1}^{d} \langle x, \,R_{n,k}
   \tilde{C}^{\epsilon}_{\gamma}(t)^{\alpha}
    \otimes E_{\alpha}\rangle E_{\alpha}
\end{equation}
in the complexification of $\mathfrak{g}$,
where $R_{n,k}\tilde{C}^{\epsilon}_{\gamma}(t)^{\alpha}
\otimes E_{\alpha}$ is defined by
\[
 R_{n,k}\tilde{C}^{\epsilon}_{\gamma}(t)^{\alpha}
  \otimes E_{\alpha} = R_{n,k}^{1} 
   \tilde{C}^{\epsilon}_{\gamma}(t)^{\alpha}
    \otimes E_{\alpha} + \sqrt{-1}R_{n,k}^{2}
     \tilde{C}^{\epsilon}_{\gamma}(t)^{\alpha}
      \otimes E_{\alpha}.
\]
For convenience we denote the accompanying Gaussian
random variables by
\begin{equation}\label{eqn:5.7}
 R_{n,k}^{1}x^{\epsilon,\alpha}_{\gamma}(t)
  = \langle x, \,R_{n,k}^{1}
   \tilde{C}^{\epsilon}_{\gamma}(t)^{\alpha}
    \otimes E_{\alpha} \rangle,  \quad
 R_{n,k}^{2}x^{\epsilon,\alpha}_{\gamma}(t)
  = \langle x, \,R_{n,k}^{2}
   \tilde{C}^{\epsilon}_{\gamma}(t)^{\alpha}
    \otimes E_{\alpha} \rangle
\end{equation}
and set
\begin{equation*}
 R_{n,k}x^{\epsilon,\alpha}_{\gamma}(t)
  = R_{n,k}^{1}x^{\epsilon,\alpha}_{\gamma}(t)
   + \sqrt{-1}
    R_{n,k}^{2}x^{\epsilon,\alpha}_{\gamma}(t).
\end{equation*}

Now, noting that
\begin{align*}
 & \int_0^1\!\!\int_0^{t_1} \cdots \int_0^{t_{r-1}}
  d(\bar{A_{0}} +  R_{n,k}x^{\epsilon}_{\gamma})(t_1)
   d(\bar{A_{0}} + R_{n,k}x^{\epsilon}_{\gamma})(t_2)
    \cdots
     d(\bar{A_{0}} + R_{n,k}x^{\epsilon}_{\gamma})(t_r)
      \\[0.1cm]
 & \quad  = \sum_{m=0}^{r}\,
   \sum_{1\leq l_{1} < l_{2} < \cdots < l_{m} \leq r}
    \ \int_0^1 d\bar{A}_{0}(t_1) \cdots \!
     \int_0^{t_{l_1 -1}} dR_{n,k}x^{\epsilon}_{\gamma}(t_{l_1})
      \cdots \!\int_0^{t_{l_m -1}}
       dR_{n,k}x^{\epsilon}_{\gamma}(t_{l_m}) \\
  & \hspace{5cm} \boldsymbol{\cdot} \cdots \!
   \int_0^{t_{r-1}} d\bar{A}_{0}(t_r),  
\end{align*}
we obtain, by the same reasoning as in Lemma~\ref{lem:3}, 
that for any positive integer $q$ and $x \in B$ 
\begin{equation}\label{eqn:5.8}
\begin{aligned}
 E & \Big[\big\Vert W^{\epsilon}_{\gamma}(R_{n,k}x)
  \big\Vert^{2q}\Big]  \\[0.1cm]
 & \leq  E\Bigg[ \Bigg( \sum_{r=0}^{\infty}
  \ \sum_{m=0}^{r}
   \ \sum_{1\leq l_{1} < l_{2} < \cdots < l_{m} \leq r}
    \ \sum_{\alpha_{1},\alpha_{2}, \dots, \alpha_{r} =1}^{d}
     \,c_{E}^{r} \,\bigg| \int_{0}^{1} d\bar{A}^{\alpha_{1}}_{0}(t_1)
      \cdots  \\
 & \hspace{1cm}  \boldsymbol{\cdot} \int_0^{t_{l_{1}-1}}
  dR_{n,k}x^{\epsilon,\alpha_{l_{1}}}_{\gamma}(t_{l_{1}})
   \cdots \!\int_0^{t_{l_{m}-1}}
    dR_{n,k}x^{\epsilon,\alpha_{l_{m}}}_{\gamma}(t_{l_{m}})
     \cdots \!\int_0^{t_{r-1}} d\bar{A}^{\alpha_r}_{0}(t_{r})
      \bigg| \Bigg)^{2q} \,\Bigg]  \\[0.1cm]
 & \leq   E\Bigg[ \Bigg( \sum_{r=0}^{\infty}
  \ \sum_{m=0}^{r}
   \ \sum_{\stackrel{\scriptstyle 1\leq l_{1} < l_{2} < 
    \dots < l_{m} \leq r,}
     {\nu_{1}, \nu_{2}, \dots, \nu_{m} \in \{1, 2\}}}
      \ \sum_{\alpha_{1},\alpha_{2}, \dots, \alpha_{r}=1}^{d}
       \,c_{E}^{r} \, \bigg| \int_{0}^{1} d\bar{A}^{\alpha_{1}}_{0}
        (t_{1}) \cdots  \\
 & \hspace{1cm}  \boldsymbol{\cdot}
  \int_0^{t_{l_{1}-1}} dR^{\nu_{1}}_{n,k}
   x^{\epsilon,\alpha_{l_{1}}}_{\gamma}(t_{l_{1}})
    \cdots  \!\int_{0}^{t_{l_m -1}}
     dR^{\nu_{m}}_{n,k}
      x^{\epsilon,\alpha_{l_{m}}}_{\gamma}(t_{l_{m}})
       \cdots  \!\int_{0}^{t_{r-1}} d\bar{A}^{\alpha_{r}}_{0}(t_{r})
        \bigg| \Bigg)^{2q} \,\Bigg]  \\[0.1cm]
 & \leq \Bigg( \sum_{r=0}^{\infty} \ \sum_{m=0}^{r}
   \ \sum_{\stackrel{\scriptstyle 1\leq l_{1} < l_{2} < 
    \dots < l_{m} \leq r,}
     {\nu_{1}, \nu_{2}, \dots, \nu_{m} \in \{1, 2\}}}
      \ \sum_{\alpha_{1},\alpha_{2}, \dots, \alpha_{r}=1}^{d}
       \,c_{E}^{r} \, E\Bigg[ \bigg| \int_{0}^{1}
        d\bar{A}^{\alpha_{1}}_{0}(t_{1}) \cdots   \\
 & \hspace{0.9cm}  \boldsymbol{\cdot}
  \int_0^{t_{l_{1}-1}} dR^{\nu_{1}}_{n,k}
   x^{\epsilon,\alpha_{l_{1}}}_{\gamma}(t_{l_{1}})
    \cdots \!\int_0^{t_{l_{m}-1}} dR^{\nu_{m}}_{n,k}
     x^{\epsilon,\alpha_{l_{m}}}_{\gamma}(t_{l_{m}})
      \cdots  \!\int_0^{t_{r-1}} d\bar{A}^{\alpha_r}_{0}(t_{r})
       \bigg|^{2q} \Bigg]^{1/2q} \,\Bigg)^{2q}.
\end{aligned}
\end{equation}

To estimate the right side of \eqref{eqn:5.8}, 
let $s_{i}$, $i=0,1, \dots, r$, be non-negative integers
and set 
\begin{equation*}
 t^{s_{i}}_{i} =
 \begin{cases}
  0 & \textrm{if}  \quad  s_{i} = 0,  \\
  t^{s_{i}-1}_{i} + {t^{s_{i-1}}_{i-1}}/2^{n_{i}}_{\phantom{0}}
   & \textrm{if} \quad  s_{i} \geq 1,
 \end{cases}
\end{equation*}
with $t_{0}^{s_{0}} = 1$.
Also, write for brevity 
\begin{align*}
A^{\alpha_{i}}_{0}[s_{i}]
 &  =  \bar{A}^{\alpha_i}_{0}(t^{s_{i}+1}_i)
  - \bar{A}^{\alpha_i}_{0}(t^{s_{i}}_i) ,  \\[0.1cm]
R^{\nu}_{n,k}x
 ^{\epsilon,\alpha_{i}}_{\gamma}[s_{i}]
 &  =  R^{\nu}_{n,k}x^{\epsilon,\alpha_{i}}_{\gamma}
 (t^{s_{i}+1}_{i}) - R^{\nu}_{n,k}
  x^{\epsilon,\alpha_{i}}_{\gamma}(t^{s_{i}}_{i}) .
\end{align*}
Then it follows from an estimate similar to that of (2) of 
Lemma~\ref{lem:1} together with Lebesgue's convergence 
theorem that 
\begin{align*}
 E  &  \Bigg[ \bigg| \int_0^{1} 
  d\bar{A}^{\alpha_1}_{0}(t_1) \cdots 
   \!\int_0^{t_{l_1 -1}} dR^{\nu_{1}}_{n,k}
    x^{\epsilon,\alpha_{l_1 }}_{\gamma}(t_{l_1})
     \cdots  \\
 & \hspace{0.6cm} \boldsymbol{\cdot} \int_0^{t_{l_m -1}}
  dR^{\nu_{m}}_{n,k}
   x^{\epsilon,\alpha_{l_m}}_{\gamma}(t_{l_m})
    \cdots \!\int_0^{t_{r-1}}d\bar{A}^{\alpha_r}_{0}(t_r)
     \bigg|^{2q} \,\Bigg]^{{1}/{2q}}  \\
 &  =  \lim_{n_{1}, \dots, n_{r} \to \infty}\,
  E\Bigg[ \bigg| \sum_{s_{1}=0}^{2^{n_{1}}-1}
   A^{\alpha_1}_{0}[s_{1}] \cdots 
    \sum_{s_{l_{1}}=0}^{2^{n_{l_{1}}}-1}
     R^{\nu_{1}}_{n,k}
      x^{\epsilon,\alpha_{l_{1}}}_{\gamma}[s_{l_{1}}]
       \cdots  \\
  &  \hspace{3.3cm} \boldsymbol{\cdot}
   \sum_{s_{l_{m}}=0}^{2^{n_{l_{m}}}-1}
    R^{\nu_{m}}_{n,k}
     x^{\epsilon,\alpha_{l_{m}}}_{\gamma}[s_{l_{m}}]
      \cdots \sum_{s_{r}=0}^{2^{n_{r}}-1}
       A^{\alpha_r}_{0}[s_{r}]
        \bigg|^{2q} \,\Bigg]^{{1}/{2q}}  \\
 & \leq  c_{2}(A_{0})^{r-m}
  \lim_{n_{1}, \dots, n_{r} \to \infty}\,
   E\Bigg[ \Bigg( \sum_{s_{1}=0}^{2^{n_{1}}-1} \cdots
    \sum_{s_{r}=0}^{2^{n_{r}}-1}
     \big|t^{s_{1}+1}_1 - t^{s_{1}}_1\big| \cdots 
      \big|R^{\nu_{1}}_{n,k}
       x^{\epsilon,\alpha_{l_{1}}}_{\gamma}[s_{l_{1}}]
        \big|  \\
 &  \hspace{5.5cm} \boldsymbol{\cdot} \cdots
  \big|R^{\nu_{m}}_{n,k}
   x^{\epsilon,\alpha_{l_{m}}}_{\gamma}[s_{l_{m}}]\big|
    \cdots \big|t^{s_{r}+1}_r - t^{s_{r}}_r\big| \Bigg)^{2q}
      \,\Bigg]^{{1}/{2q}},
\end{align*}
which is, by the same reasoning as in Lemma~\ref{lem:3}, 
dominated by 
\begin{equation}\label{eqn:5.11}
\begin{aligned}
 & c_{2}(A_{0})^{r-m}
  \lim_{n_{1}, \dots, n_{r} \to \infty}\,
   \sum_{s_{1}=0}^{2^{n_{1}}-1} \cdots
    \sum_{s_{r}=0}^{2^{n_{r}}-1}  \\
 &  \quad  E\bigg[ \Big(
  \big|t^{s_{1}+1}_1 - t^{s_{1}}_1\big|  \cdots
   \big| R^{\nu_{1}}_{n,k}
    x^{\epsilon,\alpha_{l_{1}}}_{\gamma}[s_{l_{1}}]\big|
     \cdots  \big| R^{\nu_{m}}_{n,k}
      x^{\epsilon,\alpha_{l_{m}}}_{\gamma}[s_{l_{m}}]\big|
       \cdots  \big| t^{s_{r}+1}_r - t^{s_{r}}_r \big|
        \Big)^{2q} \,\bigg]^{{1}/{2q}}.
\end{aligned}
\end{equation}
Furthermore, for the Gaussian random variables \eqref{eqn:5.7}, 
we see from \eqref{eqn:2.6} and \eqref{eqn:2.10} that 
for $\nu = 1,2$ 
\begin{equation}\label{eqn:5.12}
\begin{aligned}
 E  &  \Big[ \big| R^{\nu}_{n,k}
  x^{\epsilon,\alpha}_{\gamma}(t)
   - R^{\nu}_{n,k}
    x^{\epsilon,\alpha}_{\gamma}(s) \big|^{2} \Big] 
     =  \big\Vert R_{n,k}^{\nu}
      \tilde{C}^{\epsilon}_{\gamma}(t)^{\alpha}
       \otimes E_{\alpha} - R_{n,k}^{\nu}
        \tilde{C}^{\epsilon}_{\gamma}(s)^{\alpha}
         \otimes E_{\alpha} \big\Vert_{p}^{2}  \\[0.1cm]
 & = \sum_{j=1}^{\infty}
  \Big( \mbox{\big($a_{n,k}^{j}$ or $b_{n,k}^{j}$\big)}
   \big(\tilde{C}^{\epsilon}_{\gamma}(t)^{\alpha}
    \otimes E_{\alpha} - 
     \tilde{C}^{\epsilon}_{\gamma}(s)^{\alpha}
      \otimes E_{\alpha}, \,h_{j}\big)_{p} \Big)^{2}  \\
 &  \leq  \sum_{j=1}^{\infty}
  \frac{1}{2k|\lambda_{j}|} 
   \big(C^{\epsilon}_{\gamma}(t)^{\alpha}\otimes E_{\alpha}
    - C^{\epsilon}_{\gamma}(s)^{\alpha}\otimes E_{\alpha}, 
     \,e_{j}\big)^{2}  \\
  &  \leq  \frac{1}{2k\rho}
  \big\Vert C^{\epsilon}_{\gamma}(t)^{\alpha}
   \otimes E_{\alpha} - C^{\epsilon}_{\gamma}(s)^{\alpha}
    \otimes E_{\alpha} \big\Vert_{0}^{2}  \\[0.1cm]
  &  \leq  \frac{1}{2k\rho} c_{1}(\epsilon)^{2}|t - s|^{2},
\end{aligned}
\end{equation}
where we set
\begin{equation*}
 \rho = \min_{j} 
  |\lambda_{j}| > 0.
\end{equation*}

Now we recall the following 
well-known lemma (see \cite{Dalecky-Fomin}).
\begin{lemma}\label{lem:5}\ 
Let $X_{i}$, $i=1,2,\dots,2l$, be a mean-zero Gaussian 
system. Then
\begin{equation*}
 \begin{aligned}
  E & \big[X_{1}X_{2} \cdots X_{2l}\big]  \\[0.1cm]
  & = \frac{1}{2^{l}l!}
  \sum_{\sigma \in {\mathfrak S}_{2l}}
   E\big[X_{\sigma(1)}X_{\sigma(2)}\big]
    E\big[X_{\sigma(3)}X_{\sigma(4)}\big] \cdots 
     E\big[X_{\sigma(2l-1)}X_{\sigma(2l)}\big],
 \end{aligned}
\end{equation*}
where ${\mathfrak S}_{2l}$ denotes the group of
permutations of $\{1,2,\dots,2l\}$.
\end{lemma}

Then it follows from \eqref{eqn:5.12} together with
Lemma~\ref{lem:5} that
\begin{align*}
 E & \left[\Big(\big| R_{n,k}^{\nu_{1}}
  x^{\epsilon,\alpha_{l_{1}}}_{\gamma}[s_{l_{1}}]\big|
   \cdots  \big| R_{n,k}^{\nu_{m}}
    x^{\epsilon,\alpha_{l_{m}}}_{\gamma}[s_{l_{m}}]\big|
     \Big)^{2q} \,\right]  \\[0.1cm]
 & \leq \frac{(2qm)! 
  \big({c_{1}(\epsilon)}/{\sqrt{2k\rho}}\,\big)^{2qm}}
  {2^{qm}(qm)!}\,
   \big|t^{s_{l_{1}}+1}_{l_1} - t^{s_{l_{1}}}_{l_1}\big|^{2q}
    \cdots  \big|t^{s_{l_{m}}+1}_{l_m}
     - t^{s_{l_{m}}}_{l_m}\big|^{2q},
\end{align*}
from which we see that \eqref{eqn:5.11} is then dominated 
by
\begin{equation}\label{eqn:5.21}
\begin{aligned}
 c_{2} & (A_{0})^{r-m}
  \lim_{n_{1}, \dots, n_{r} \to \infty}\,
   \sum_{s_{1}=0}^{2^{n_{1}}-1} \cdots
    \sum_{s_{r}=0}^{2^{n_{r}}-1}\,
     \Bigg\{\frac{(2qm)!
      \big({c_{1}(\epsilon)}/{\sqrt{2k\rho}}\,\big)^{2qm}}
       {2^{qm}(qm)!} \Bigg\}^{{1}/{2q}}  \\
 & \hspace{5.8cm} \boldsymbol{\cdot}
  \big|t^{s_{1}+1}_1 - t^{s_{1}}_1\big|
   \cdots  \big|t^{s_{r}+1}_r - t^{s_{r}}_r\big|  \\[0.2cm]
 &  \leq  c_{2}(A_{0})^{r-m}
  \left( \frac{c_{1}(\epsilon)}{\sqrt{2k\rho}} \right)^{m}
   \left\{ \frac{(2qm)!}{2^{qm}(qm)!} \right\}^{{1}/{2q}}
    \int_0^1\!\! \int_0^{t_{1}}  \cdots
     \int_0^{t_{r-1}} dt_{1}dt_{2} \cdots dt_{r}  \\[0.3cm]
  &  \leq  c_{4}(A_{0})^{r}
   \left( \frac{\sqrt{2q}}{\sqrt{2k\rho}} \right)^{m}
    \frac{\sqrt{m!}}{r!},
\end{aligned}
\end{equation}
since $(qm)! \leq \left(m! q^{m}\right)^q$,
where $c_{4}(A_{0}) = 
\max\left\{c_{2}(A_{0}), c_{1}(\epsilon)\right\}$.

Consequently, summing up these estimates and
denoting $\sigma = d \cdot c_{E}$,
we obtain
\begin{equation}\label{eqn:5.13}
\begin{aligned}
E  &  \Big[\big\Vert W^{\epsilon}_{\gamma}(R_{n,k}x)
  \big\Vert^{2q}\Big] \leq
   \Bigg( \sum_{r=0}^{\infty} \big(\sigma
    c_{4}(A_{0})\big)^{r}
     \sum_{m=0}^{r} {}_{r}C_{m}
      \left(2\sqrt{\frac{q}{k\rho}} \,\right)^{m}
       \frac{1}{\sqrt{r!}} \Bigg)^{2q}   \\[0.1cm]
 &  =  \Bigg( \sum_{r=0}^{\infty}
  \left\{ \sigma c_{4}(A_{0}) \left(
   1 + 2\sqrt{\frac{q}{k\rho}} \,\right) \right\}^{r}
     \frac{1}{\sqrt{r!}} \Bigg)^{2q} < \infty
\end{aligned}
\end{equation}
with the bound being independent of $n$.

\textit{Step $3$}. 
Since $B^{\ast}$ is dense in $H$, for 
each $h\in H$, there is a sequence 
$\{\xi_{n}\}_{n=1}^{\infty}$ of elements in $B^{\ast}$ such 
that $\lim_{n\to \infty} \Vert h - \xi_{n}\Vert_{p} = 0$. 
As is well-known, $\langle \ \cdot\ , \, \xi_{n}\rangle$ 
then converges to $\langle \ \cdot\ , \,h\rangle$ in 
$L^{2}(B, \boldsymbol{R}; \mu)$ as $n \to \infty$.
Hence, taking a subsequence if necessary, we may assume that
$\langle x, \, \xi_{n}\rangle$ converges to
$\langle x, h\rangle$ for $\mu$-almost every $x\in B$. 
Then we define for $x \in B$ and $h \in H$
\begin{equation}\label{eqn:5.14}
 \langle x, h\rangle =
 \begin{cases}
  \displaystyle\lim_{n\to \infty} \langle x, \,\xi_{n}\rangle
   & \mbox{if it exists,}  \\
  0  & \mbox{otherwise},
 \end{cases}
\end{equation}
as usual.

It should be noted that, given $\xi \in B^{\ast}$,
the operator $R_{k}$ defined by \eqref{eqn:3.7} 
takes $\xi$ into $H$; not into $B^{\ast}$ in general.
This leads us to define, by virtue of \eqref{eqn:5.14},
elements in the complexification of $\mathfrak{g}$,
associated with $x \in B$ and
$\tilde{C}_{\gamma}^{\epsilon}(t) \in B^{\ast}$, by
\begin{gather*}
 R_{k}x_{\gamma}^{\epsilon}(t) = \sum_{\alpha=1}^{d}
  \langle x, \,R_{k}\tilde{C}_{\gamma}^{\epsilon}(t)^{\alpha}
   \otimes  E_{\alpha} \rangle E_{\alpha},  \\[0.2cm]
 R_{k}\tilde{C}^{\epsilon}_{\gamma}(t)^{\alpha}
  \otimes E_{\alpha} = R_{k}^{1} 
   \tilde{C}^{\epsilon}_{\gamma}(t)^{\alpha}
    \otimes E_{\alpha} + \sqrt{-1}R_{k}^{2}
     \tilde{C}^{\epsilon}_{\gamma}(t)^{\alpha}
      \otimes E_{\alpha},
\end{gather*}
and the accompanying Gaussian random variables 
\[
 R_{k}^{1}x^{\epsilon,\alpha}_{\gamma}(t)
  = \langle x, \,R_{k}^{1}
   \tilde{C}^{\epsilon}_{\gamma}(t)^{\alpha}
    \otimes E_{\alpha} \rangle,  \quad
 R_{k}^{2}x^{\epsilon,\alpha}_{\gamma}(t)
  = \langle x, \,R_{k}^{2}
   \tilde{C}^{\epsilon}_{\gamma}(t)^{\alpha}
    \otimes E_{\alpha} \rangle
\]
in a manner similar to that in defining
$R_{n,k}x^{\epsilon}_{\gamma}(t)$ and
$R_{n,k}^{1}x^{\epsilon,\alpha}_{\gamma}(t)$,
$R_{n,k}^{2}x^{\epsilon,\alpha}_{\gamma}(t)$
in \eqref{eqn:5.10} and \eqref{eqn:5.7}, respectively.
Then it is immediate from \eqref{eqn:5.12} that we have
\begin{equation}\label{eqn:5.15}
 E\Big[ \big|R_{k}x^{\epsilon,\alpha}_{\gamma}(t)
  - R_{k}x^{\epsilon,\alpha}_{\gamma}(s)\big|^{2}\Big]
    \leq  c_{5}(\epsilon)^{2}|t - s|^{2}.
\end{equation}
Hence, by virtue of the Kolmogorov-Delporte 
criterion~\cite{Delporte},
$R_{k}x^{\epsilon,\alpha}_{\gamma}(t)$ has a continuous 
modification in $t$. 
Henceforth we denote such continuous modification by the 
same symbol $R_{k}x^{\epsilon,\alpha}_{\gamma}(t)$.
  
Now, for any positive integer $n$, set
\[
T_{n} = \sum_{j=1}^{2^{n}}
 \left| R_{k}x^{\epsilon,\alpha}_{\gamma}
  \Big(\frac{j}{2^{n}}\Big)
   - R_{k}x^{\epsilon,\alpha}_{\gamma}
    \Big(\frac{j-1}{2^{n}}\Big) \right| .
\]
Then, since $T_{n} \leq T_{n+1}$, it is easy to see 
from \eqref{eqn:5.15} that
\begin{align*}
  E & \Big[\lim_{n\to\infty}T_{n}\Big]  
   = \lim_{n\to\infty} E\Bigg[ \sum_{j=1}^{2^{n}}
    \left| R_{k}x^{\epsilon,\alpha}_{\gamma}
     \Big(\frac{j}{2^{n}}\Big)
      - R_{k}x^{\epsilon,\alpha}_{\gamma}
       \Big(\frac{j-1}{2^{n}}\Big) \right|
        \Bigg]   \\
  &  \leq  \lim_{n\to\infty} \sum_{j=1}^{2^{n}}
   E\left[ \left| R_{k}x^{\epsilon,\alpha}_{\gamma}
    \Big(\frac{j}{2^{n}}\Big)
     - R_{k}x^{\epsilon,\alpha}_{\gamma}
      \Big(\frac{j-1}{2^{n}}\Big) \right|^{2} \,\right]^{1/2}  \\
  &  \leq  \lim_{n\rightarrow\infty}
   \sum_{j=1}^{2^{n}} c_{5}(\epsilon)
    \left|\frac{j}{2^{n}} - \frac{j-1}{2^{n}} \right|  \\
  &  \leq c_{5}(\epsilon) ,
\end{align*}
which implies that
\begin{equation*}
 \lim_{n\rightarrow\infty} T_{n} 
  < \infty \quad \mbox{$\mu$-almost everywhere}.
\end{equation*}
Since $R_{k}x^{\epsilon,\alpha}_{\gamma}(t)$ 
is continuous in $t$ almost surely, 
this implies that $R_{k} x^{\epsilon,\alpha}_{\gamma}(t)$ 
is of bounded variation for all $x \in B^{\prime}\subset B$ 
with $\mu(B^{\prime}) = 1$.
Therefore the Lebesgue-Stieltjes integral 
\begin{equation}\label{eqn:5.16}
\int_0^1\!\! \int_0^{t_1} \cdots
   \int_0^{t_{r-1}}d(\bar{A}_{0} + 
    R_{k}x^{\epsilon}_{\gamma})(t_1)d(\bar{A}_{0} 
     + R_{k}x^{\epsilon}_{\gamma})(t_2) \cdots 
      d(\bar{A}_{0} + R_{k}x^{\epsilon}_{\gamma}) (t_r)
\end{equation}
is well-defined for all $x \in B^{\prime}\subset B$ with 
$\mu(B^{\prime}) = 1$. 
According to \eqref{eqn:3.1} and \eqref{eqn:2.0}, 
we then define the stochastic holonomy given by $R_{k}x$ 
to be
\begin{align*}
 W^{\epsilon,r}_{\gamma}(R_{k}x)  & =
  \begin{cases}
   \eqref{eqn:5.16}  & \mbox{for $x \in B^{\prime}$}, \\
   0  &  \mbox{for $x \in B\setminus B^{\prime}$},
  \end{cases}
  \\ 
 W^{\epsilon}_{\gamma}(R_{k}x)  & = 
  I + \sum_{r=1}^{\infty} 
   W^{\epsilon,r}_{\gamma}(R_{k}x),
\end{align*}
and the associated Wilson line by
\[ 
 F^{\epsilon}_{A_{0}}(R_{k}x) = \prod_{j=1}^{s} 
  \operatorname{Tr}_{R_j} 
   W^{\epsilon}_{\gamma_{j}}(R_{k}x).
\]

The well-definedness of $W^{\epsilon}_{\gamma}(R_{k}x)$
can be seen as follows.
First we note that
\begin{equation}\label{eqn:5.17}
\begin{aligned}
 E & \Bigg[ \bigg| \int_0^1\!\! \int_0^{t_1} \cdots
   \int_0^{t_{r-1}} d(\bar{A}^{\alpha_1}_{0} + 
    R_{k}x^{\epsilon,\alpha_{1}}_{\gamma})(t_1)
     \cdots d(\bar{A}^{\alpha_r}_{0} + 
      R_{k}x^{\epsilon,\alpha_r}_{\gamma}) (t_r)
       \bigg| ^{2q} \,\Bigg]  \\[0.2cm]
 & \leq  E\Bigg[ \lim_{n_{1}, \dots, n_{r} \to \infty}
   \bigg| \sum_{s_{1}=0}^{2^{n_{1}}-1}
    \big| A^{\alpha_1}_{0}[s_{1}] + 
     R_{k}x^{\epsilon, \alpha_{1}}_{\gamma}[s_{1}] \big|  
      \cdots  \sum_{s_{r}=0}^{2^{n_{r}}-1}
       \big| A^{\alpha_r}_{0}[s_{r}] + 
        R_{k}x^{\epsilon, \alpha_{r}}_{\gamma}[s_{r}] \big|
         \bigg|^{2q} \,\Bigg]   \\[0.2cm]
 &  \leq  \lim_{n_{1}, \dots, n_{r} \to \infty}
   \Bigg( \,\sum_{s_{1}=0}^{2^{n_{1}}-1}
    \cdots \sum_{s_{r}=0}^{2^{n_{r}}-1}
     E\bigg[ \Big( \big| A^{\alpha_1}_{0}[s_{1}] + 
      R_{k}x^{\epsilon, \alpha_{1}}_{\gamma}[s_{1}] \big|
       \\[-0.1cm]
  & \hspace{5.5cm} \boldsymbol{\cdot} \cdots
   \big| A^{\alpha_r}_{0}[s_{r}] + 
    R_{k}x^{\epsilon, \alpha_{r}}_{\gamma}[s_{r}] \big|
     \Big)^{2q} \,\bigg]^{1/2q} \,\Bigg)^{2q}
  \end{aligned}
\end{equation}  
On the other hand, it is easy to see from \eqref{eqn:5.15}
together with Lemma~\ref{lem:5} that 
\begin{equation*}
 E\Big[ \big| A^{\alpha_i}_{0}[s_{i}] + 
  R_{k}x^{\epsilon, \alpha_{i}}_{\gamma}[s_{i}|]
   \big|^{2m} \Big] 
    \leq  c_{6}(A_{0},m) \big|t^{s_{i}+1}_i - 
     t^{s_{i}}_i \big|^{2m}
\end{equation*}
for any positive integer $m$, 
so that \eqref{eqn:5.17} is dominated by
\begin{equation*}
 c_{7}(\epsilon) \left( \int_0^1\!\! \int_0^{t_1}
  \cdots \int_0^{t_{r-1}} dt_{1}dt_{2} \cdots 
   dt_{r} \right)^{2q}.
\]
This, together with Lebesgue's convergence theorem,
then yields that
\begin{equation}\label{eqn:5.18}
\begin{aligned}
 E & \Bigg[ \bigg| \int_0^1\!\! \int_0^{t_1} \cdots
   \int_0^{t_{r-1}} d(\bar{A}^{\alpha_1}_{0} + 
    R_{k}x^{\epsilon,\alpha_{1}}_{\gamma})(t_1)
     \cdots  d(\bar{A}^{\alpha_r}_{0} + 
      R_{k}x^{\epsilon,\alpha_r}_{\gamma}) (t_r)
       \bigg| ^{2q} \,\Bigg]   \\[0.1cm]
 &  = \lim_{n_{1}, \dots, n_{r} \to \infty}\,
  E\Bigg[ \bigg| \sum_{s_{1}=0}^{2^{n_{1}}-1}
   \big( A^{\alpha_1}_{0}[s_{1}] + 
    R_{k}x^{\epsilon, \alpha_{1}}_{\gamma}[s_{1}]\big)
     \cdots   \sum_{s_{r}=0}^{2^{n_{r}}-1}
      \big( A^{\alpha_r}_{0}[s_{r}] + 
       R_{k}x^{\epsilon, \alpha_{r}}_{\gamma}[s_{r}] \big)
        \bigg|^{2q} \,\Bigg],
  \end{aligned}
\end{equation}
which assures that 
the above estimates obtained for 
$W^{\epsilon}_{\gamma}(R_{n,k}x)$ in 
\eqref{eqn:5.8} through \eqref{eqn:5.13} also hold for
$W^{\epsilon}_{\gamma}(R_{k}x)$ without essential change.
In consequence, we obtain
\begin{equation}\label{eqn:5.19}
 E\Big[\big\Vert W^{\epsilon}_{\gamma}(R_{k}x)
  \big\Vert^{2q}\Big] < \infty,
\end{equation}
showing that $W^{\epsilon}_{\gamma}(R_{k}x)$
is well-defined for each $x \in B$.

\textit{Step $4$}.
Furthermore, since 
$R^{\nu}_{n,k}\tilde{C}^{\epsilon}_{\gamma}(t)^{\alpha}
\otimes E_{\alpha}$ converges to 
$R^{\nu}_{k}\tilde{C}^{\epsilon}_{\gamma}(t)^{\alpha}
\otimes E_{\alpha}$ in $H$ as $n \to \infty$ for $\nu = 1, 2$,
it also follows from Lebesgue's convergence theorem that
\begin{equation}\label{eqn:5.20}
 \lim_{n\to \infty} E\Big[ \big\Vert 
  W^{\epsilon}_{\gamma}(R_{n,k}x)
   - W^{\epsilon}_{\gamma}(R_{k}x) 
    \big\Vert^{2q} \Big] = 0.
\end{equation}
Indeed, as in the estimation in \eqref{eqn:5.8} it holds that
\begin{equation*}
\begin{aligned}
 E & \Big[ \big\Vert 
  W^{\epsilon}_{\gamma}(R_{n,k}x)
   - W^{\epsilon}_{\gamma}(R_{k}x)
    \big\Vert^{2q} \Big]  \\
 & \leq \Bigg( \sum_{r=0}^{\infty}
  \ \sum_{m=0}^{r}
   \ \sum_{\stackrel{\scriptstyle 1\leq l_{1} < l_{2} < 
    \dots < l_{m} \leq r,}
     {\nu_{1}, \nu_{2}, \dots, \nu_{m} \in \{1, 2\}}}
      \ \sum_{\alpha_{1},\alpha_{2}, \dots, \alpha_{r} =1}^{d}
       c_{E}^{r} \,E\bigg[ \Big\vert
        D^{r,m}\big[ R^{\nu}_{n,k}x, R^{\nu}_{k}x \big]
         \Big\vert^{2q} \bigg]^{{1}/{2q}} \Bigg)^{2q},
 \end{aligned}
\end{equation*}
where for brevity we write
\begin{align*}
 & D^{r,m} \big[ R^{\nu}_{n,k}x,R^{\nu}_{k}x \big]  \\[0.1cm]
 &  = \int_{0}^{1} d\bar{A}^{\alpha_1}_{0}(t_1)
  \cdots \!\int_0^{t_{l_{1} -1}} dR^{\nu_{1}}_{n,k}
   x^{\epsilon, \alpha_{l_{1}}}_{\gamma}(t_{l_1})
    \cdots \!\int_0^{t_{l_{m} -1}} dR^{\nu_{m}}_{n,k}
     x^{\epsilon, \alpha_{l_{m}}}_{\gamma}(t_{l_{m}})
      \cdots \!\int_{0}^{t_{r-1}} d\bar{A}^{\alpha_r}_{0}(t_r)  \\
 & \quad - \int_{0}^{1} d\bar{A}^{\alpha_{1}}_{0}(t_1)
  \cdots \!\int_0^{t_{l_{1} -1}} dR^{\nu_{1}}_{k}
   x^{\epsilon, \alpha_{l_1 }}_{\gamma}(t_{l_{1}})
    \cdots \!\int_0^{t_{l_{m} -1}} dR^{\nu_{m}}_{k}
     x^{\epsilon, \alpha_{l_{m}}}_{\gamma}(t_{l_{m}})
      \cdots \int_0^{t_{r-1}} d\bar{A}^{\alpha_{r}}_{0}(t_{r}).
\end{align*}
Also, setting
\begin{align*}
 B_{j} =  & \int_{0}^{1} d\bar{A}^{\alpha_{1}}_{0}(t_{1})
  \cdots \!\int_{0}^{t_{l_{1}-1}} dR^{\nu_{1}}_{k}
   x^{\epsilon, \alpha_{l_{1}}}_{\gamma}(t_{l_{1}}) \cdots 
    \!\int_{0}^{t_{l_{j}-1}} d\big\{ 
     R^{\nu_{j}}_{n,k}
      x^{\epsilon, \alpha_{l_{j}}}_{\gamma}(t_{l_{j}})
       - R^{\nu_{j}}_{k}
        x^{\epsilon,\alpha_{l_{j}}}_{\gamma}(t_{l_{j}}) \big\} \\
 & \quad \boldsymbol{\cdot} \cdots \!\int_0^{t_{l_m -1}}
  dR^{\nu_{m}}_{n,k}
   x^{\epsilon,\alpha_{l_{m}}}_{\gamma}(t_{l_{m}})
    \cdots \!\int_0^{t_{r-1}} d\bar{A}^{\alpha_{r}}_{0}(t_{r}),
\end{align*}
we obtain, by the same reasoning as in Lemma \ref{lem:3}, that
\begin{equation}\label{eqn:5.22}
E\bigg[ \Big\vert
 D^{r,m}\big[ R^{\nu}_{n,k}x, R^{\nu}_{k}x \big]
  \Big\vert^{2q} \bigg]^{{1}/{2q}}
   \leq \sum_{j=1}^{m} E\Big[ \big\vert 
    B_{j}\big\vert^{2q} \Big]^{{1}/{2q}}.
\end{equation}

On the other hand, by an argument similar to that in obtaining
\eqref{eqn:5.11}, 
we see that each term of the right side of \eqref{eqn:5.22} is
dominated by
\begin{align*}
 c_{2}(A_{0})^{r-m}  &
  \lim_{n_{1}, \dots, n_{r} \to \infty}
   \sum_{s_{1}=0}^{2^{n_{1}}-1}
    \cdots \sum_{s_{r}=0}^{2^{n_{r}}-1}
     E\bigg[\Big( \big| t^{s_{1}+1}_1 - t^{s_{1}}_1\big| 
      \cdots  \big| R^{\nu_{1}}_{k}
       x^{\epsilon,\alpha_{l_{1}}}_{\gamma}[s_{l_{1}}]
        \big| \cdots  \\
  &  \hspace{0.2cm} \boldsymbol{\cdot} 
    \big| R^{\nu_{j}}_{n,k}
        x^{\epsilon,\alpha_{l_{j}}}_{\gamma}[s_{l_{j}}]
    - R^{\nu_{j}}_{k}x^{\epsilon,\alpha_{l_{j}}}_{\gamma}
     [s_{l_{j}}] \big| 
   \cdots \big| R^{\nu_{m}}_{n,k}
      x^{\epsilon,\alpha_{l_{m}}}_{\gamma}
       [s_{l_{m}}] \big|  \cdots
        \big| t^{s_{r}+1}_r - t^{s_{r}}_r \big| \Big)^{2q}
         \,\bigg]^{1/2q},
\end{align*}
where it also holds as in \eqref{eqn:5.12} that
\begin{equation}\label{eqn:5.23}
\begin{aligned}
 E & \Big[ \big| R^{\nu_{j}}_{n,k}
  x^{\epsilon,\alpha_{l_{j}}}_{\gamma}[s_{l_{j}}]
   - R^{\nu_{j}}_{k}
    x^{\epsilon,\alpha_{l_{j}}}_{\gamma}[s_{l_{j}}]
     \big|^{2} \Big]  \\[0.2cm]
 & = \Big\Vert
  \big( R^{\nu_{j}}_{n,k} - R^{\nu_{j}}_{k} \big)
   \tilde{C}^{\epsilon}_{\gamma}
    \big(t_{l_{j}}^{s_{l_{j}}+1}\big)^{\alpha}
     \otimes E_{\alpha}
      - \big( R^{\nu_{j}}_{n,k} - R^{\nu_{j}}_{k} \big)
       \tilde{C}^{\epsilon}_{\gamma}
        \big( t_{l_{j}}^{s_{l_{j}}} \big)^{\alpha}
         \otimes E_{\alpha} \Big\Vert_{p}^{2}  \\[0.1cm]
 &  \leq \frac{2}{k\rho}c_{1}(\epsilon)^{2}
  \big| t_{l_{j}}^{s_{l_{j}}+1} - t_{l_{j}}^{s_{l_{j}}} \big|^{2}.
 \end{aligned}
\end{equation}
Hence, by the same reasoning as in \eqref{eqn:5.21},
we obtain that
\begin{equation}\label{eqn:5.24}
\begin{aligned}
 E & \Big[ \big\vert 
  B_{j}\big\vert^{2q} \Big]^{{1}/{2q}}  \\[0.1cm]
 &  \leq  c_{2}(A_{0})^{r-m}
    \lim_{n_{1}, \dots, n_{r} \to \infty}
     \sum_{s_{1}=0}^{2^{n_{1}}-1}
      \cdots  \sum_{s_{r}=0}^{2^{n_{r}}-1}
       \bigg\{\frac{(2qm)!(\sqrt{2} c_{1}(\epsilon)/
        \sqrt{k\rho})^{2qm}}{2^{qm}(qm)!}
         \bigg\}^{1/2q} \\
 & \hspace{6.5cm} \boldsymbol{\cdot}
  \big|t^{s_{1}+1}_1 - t^{s_{1}}_1\big|
   \cdots  \big|t^{s_{r}+1}_r - t^{s_{r}}_r\big|  \\
 & \leq  c_{4}(A_{0})^{r}
  \bigg(2\sqrt{\frac{q}{k\rho}}
   \,\bigg)^{m}\frac{\sqrt{m!}}{r!}.
 \end{aligned}
\end{equation}

Since each $R_{n,k}^{\nu}\tilde{C}^{\epsilon}_{\gamma}
(t)^{\alpha} \otimes E_{\alpha}$ converges to 
$R_{k}^{\nu}\tilde{C}^{\epsilon}_{\gamma}(t)^{\alpha}
\otimes E_{\alpha}$ in $H$ as $n \to \infty$,
it follows from the first identities in \eqref{eqn:5.12} and 
\eqref{eqn:5.23} combined with Lemma~\ref{lem:5} that
\begin{align*}
\lim_{n\to\infty}  &
 E\bigg[ \Big( \big| t^{s_{1}+1}_1 - t^{s_{1}}_1 \big|
  \cdots  \big| R_{k}^{\nu_{1}}
   x_{\gamma}^{\epsilon, \alpha_{l_{1}}}[s_{l_{1}}] 
    \big|  \cdots  \\
 & \hspace{0.7cm} \boldsymbol{\cdot} 
    \big| R^{\nu_{j}}_{n,k}
     x^{\epsilon,\alpha_{l_{j}}}_{\gamma}[s_{l_{j}}]
      - R^{\nu_{j}}_{k}
       x^{\epsilon,\alpha_{l_{j}}}_{\gamma}[s_{l_{j}}] 
        \big|\cdots 
     \big| R^{\nu_{m}}_{n,k}
      x^{\epsilon,\alpha_{l_{m}}}_{\gamma}[s_{l_{m}}]
       \big| \cdots \big| t^{s_{r}+1}_r - t^{s_{r}}_r \big|
        \Big)^{2q} \,\bigg] = 0.
\end{align*}
This, together with the estimates 
\eqref{eqn:5.23} and \eqref{eqn:5.24} with the bound
independent of $n$, then yields by Lebesgue's 
convergence theorem that
\begin{align*}
 &  c_{2}(A_{0})^{r-m}
  \lim_{n_{1}, \dots, n_{r} \to \infty}
   \sum_{s_{1}=0}^{2^{n_{1}}-1}
    \cdots \sum_{s_{r}=0}^{2^{n_{r}}-1}
     E\bigg[\Big( \big| t^{s_{1}+1}_1 - t^{s_{1}}_1\big| 
      \cdots  \big| R^{\nu_{1}}_{k}
       x^{\epsilon,\alpha_{l_{1}}}_{\gamma}[s_{l_{1}}]
        \big| \cdots  \\
  &  \hspace{1cm}  \boldsymbol{\cdot} 
    \big| R^{\nu_{j}}_{n,k}
        x^{\epsilon,\alpha_{l_{j}}}_{\gamma}[s_{l_{j}}]
    - R^{\nu_{j}}_{k}x^{\epsilon,\alpha_{l_{j}}}_{\gamma}
     [s_{l_{j}}] \big| 
   \cdots \big| R^{\nu_{m}}_{n,k}
      x^{\epsilon,\alpha_{l_{m}}}_{\gamma}
       [s_{l_{m}}] \big|  \cdots
        \big| t^{s_{r}+1}_r - t^{s_{r}}_r \big| \Big)^{2q}
         \,\bigg]^{1/2q} = 0,
\end{align*}
so that 
\begin{equation*}
\lim_{n\to\infty} E\bigg[ \Big\vert
 D^{r,m}\big[ R^{\nu}_{n,k}x, R^{\nu}_{k}x \big]
  \Big\vert^{2q} \bigg]^{{1}/{2q}} = 0.
\end{equation*}
Also, noting that it holds
\[
 (u + v)^{m} \leq 2^{m}(u^{m} + v^{m})
\]
for $u, v \geq 0$,
we have
\begin{equation}\label{eqn:5.25}
\begin{aligned}
 E  &  \bigg[ \Big\vert
 D^{r,m}\big[ R^{\nu}_{n,k}x, R^{\nu}_{k}x \big]
  \Big\vert^{2q} \bigg]^{{1}/{2q}}  \\[0.1cm]
 &  \leq  2 \Bigg( E\Bigg[ \bigg| \int_0^1 
  d\bar{A}^{\alpha_1}_{0}(t_1) \cdots
   \!\int_0^{t_{l_1 -1}} dR^{\nu_{1}}_{n,k}
    x^{\epsilon,\alpha_{l_1 }}_{\gamma}(t_{l_1})  \\[-0.1cm]
 & \hspace{1.6cm} \boldsymbol{\cdot} \cdots 
  \!\int_0^{t_{l_m -1}} dR^{\nu_{m}}_{n,k}
   x^{\epsilon,\alpha_{l_m}}_{\gamma}(t_{l_m})
    \cdots \!\int_0^{t_{r-1}} d\bar{A}^{\alpha_r}_{0}
     (t_r) \bigg|^{2q} \,\Bigg]^{1/2q}  \\
 & \hspace{0.5cm}  + E \Bigg[ \bigg|\int_0^1 
   d\bar{A}^{\alpha_1}_{0}(t_1)  \cdots
    \!\int_0^{t_{l_1 -1}} dR^{\nu_{1}}_{k}
    x^{\epsilon,\alpha_{l_1 }}_{\gamma}(t_{l_1})  \\[-0.1cm]
 & \hspace{1.6cm}  \boldsymbol{\cdot} \cdots
   \!\int_0^{t_{l_m -1}} dR^{\nu_{m}}_{k}
    x^{\epsilon,\alpha_{l_m}}_{\gamma}(t_{l_m})
     \cdots \!\int_0^{t_{r-1}} d\bar{A}^{\alpha_r}_{0}
      (t_r) \bigg|^{2q} \,\Bigg]^{1/2q} \,\Bigg).
\end{aligned}
\end{equation}

Recalling that the estimates in \eqref{eqn:5.8} through 
\eqref{eqn:5.13} are valid for both $R_{n,k}x$ and $R_{k}x$, 
and the bounds in the estimates \eqref{eqn:5.12} and 
\eqref{eqn:5.13} are independent of $n$,
it follows from \eqref{eqn:5.25} and
Lebesgue's convergence theorem that
\begin{equation*}
\lim_{n\to\infty}
 \Bigg( \sum_{r=0}^{\infty}
  \ \sum_{m=0}^{r}
   \ \sum_{\stackrel{\scriptstyle 1\leq l_{1} < l_{2} < 
    \dots < l_{m} \leq r,}
     {\nu_{1}, \nu_{2}, \dots, \nu_{m} \in \{1, 2\}}}
      \ \sum_{\alpha_{1},\alpha_{2}, \dots, \alpha_{r} =1}^{d}
       c_{E}^{r} \,E\bigg[ \Big\vert
        D^{r,m}\big[ R^{\nu}_{n,k}x, R^{\nu}_{k}x \big]
         \Big\vert^{2q} \bigg]^{{1}/{2q}} \,\Bigg)^{2q} = 0.
\end{equation*}
Hence we obtain \eqref{eqn:5.20}.

As a result, we see that 
$\operatorname{Tr}_{R_{j}}W^{\epsilon}_{\gamma}(R_{n,k}x)$ 
converges to 
$\operatorname{Tr}_{R_{j}}W^{\epsilon}_{\gamma}(R_{k}x)$ in 
$L^{2}(B, \boldsymbol{R}; \mu)$ as $n \to \infty$. 
This combined with \eqref{eqn:5.9} and \eqref{eqn:5.19}
then verifies that
\begin{equation*}\label{eqn:3.4}
 \limsup_{n\to \infty} \int_{B}
  F^{\epsilon}_{A_{0}}\big(R_{n,k}x\big)\mu (dx)
   = \int_{B}F^{\epsilon}_{A_{0}}\big(R_{k}x\big)\mu (dx).
\end{equation*}

\textit{Step $5$}.  
Finally, taking into account of \eqref{eqn:3.2},
we note that the following integrability can be proved 
in a manner similar to that in obtaining the estimates
described above.  Namely, we have
\begin{lemma}\label{lem:4}\ 
For any positive integer $N$,
\[
 E\Bigg[ \sum_{m=N}^{\infty}
  F_{A_{0}}^{\epsilon,m}(R_{n,k}x) \Bigg]
   = O\big(k^{-N/2}\big),
\]
where 
$O\big(k^{-N/2}\big)$ means 
\[
 \lim_{k\to\infty} k^{N/2}
  \left| O\big(k^{-N/2}\big)\right| < \infty.
\]
\end{lemma}
\bigskip

Then Lemma~\ref{lem:4} and the fact that
\begin{equation*}
 \int_{B}F_{A_{0}}(R_{k}x)\mu(dx) 
= \sum_{m<N} \int_{B}
    F^{\epsilon,m}_{A_{0}}(R_{k}x)\mu(dx)
 + \int_{B}\sum_{m=N}^{\infty}
  F_{A_{0}}^{\epsilon,m}(R_{k}x)\mu(dx)
\end{equation*}
complete the rest of the proof of Theorem 1.
\end{proof}

\section{Example}
\label{sect:6}
As an application of Theorem~\ref{thm:1}, 
we now calculate the Wilson line integral of two closed 
oriented loops $\gamma_1$ and $\gamma_2$ in
$3$-sphere $S^{3}$.

To this end, let $G = SU(2)$ and consider its canonical 
representation $R$.
We denote by $\{E_{\alpha}\}$, $1 \leq \alpha \leq 3$,
an {\em orthonormal basis} of the Lie algebra
$\mathfrak{g} = \mathfrak{su}(2)$ with respect to
the inner product $(X, Y) = - \operatorname{Tr}XY$ for
$X, Y \in \mathfrak{g}$.
For simplicity, we also assume for the $\epsilon$-regularized
Wilson line \eqref{eqn:2.0} that
$A_{0} = 0$, and write
\[
 F^{\epsilon}_{0}(x) = \prod_{j=1}^{2}
  \operatorname{Tr}_{R} W^{\epsilon}_{\gamma_{j}}(x).
\]

\textit{Step $1$}.  Recalling \eqref{eqn:3.1}, 
we begin with the evaluation of
\begin{equation}\label{eqn:0}
 E\Bigg[ \prod_{j=1}^{2}\operatorname{Tr}_{R} 
  W^{\epsilon ,2}_{\gamma_{j}}(R_{k}x) \Bigg].
\end{equation}
Writing briefly
\[
 \big\langle R_{k}x, \, 
  \tilde{C}^{\epsilon}_{\gamma}(t)^{\alpha}
  \otimes E_{\alpha} \big\rangle  \quad
   \mbox{by} \quad
    \big(R_{k}x^{\alpha}_{\gamma}\big)(t),
\]
we see that \eqref{eqn:0} is equal to
\begin{equation}\label{eqn:1}
\begin{aligned}
  E & \big[ \operatorname{Tr}_{R}
   W^{\epsilon, 2}_{\gamma_{1}}(R_{k}x)  \otimes
    W^{\epsilon, 2}_{\gamma_{2}}(R_{k}x) \big]  \\
 &  = \sum_{\alpha_{1},\alpha_{2},\beta_{1},\beta_{2}=1}^{3}
    \operatorname{Tr} E_{\alpha_{1}}E_{\alpha_{2}}
     \otimes E_{\beta_{1}}E_{\beta_{2}}  \\
  &  \hspace{1.5cm}  \cdot E\left[ 
   \int_{0}^{1}\!\!\!\int_{0}^{t_{1}}
    d\big(R_{k}x^{\alpha_{1}}_{\gamma_{1}}\big)(t_{1})
      d\big(R_{k}x^{\alpha_{2}}_{\gamma_{1}}\big)(t_{2})
      \int_{0}^{1}\!\!\!\int_{0}^{\tau_{1}}
       d\big(R_{k}x^{\beta_{1}}_{\gamma_{2}}\big)(\tau_{1})
        d\big(R_{k}x^{\beta_{2}}_{\gamma_{2}}\big)(\tau_{2})
         \right].
\end{aligned}
\end{equation}
Then, by changing the order of taking sum and expectation, 
in a similar manner as in the proof of \eqref{eqn:5.18}, 
we obtain
\begin{equation}\label{eqn:2}
\begin{aligned}
 {}  &  E \left[ \int_{0}^{1}\!\! \!\int_{0}^{t_{1}}
   d\big(R_{k}x^{\alpha_{1}}_{\gamma_{1}}\big)(t_{1})
    d\big(R_{k}x^{\alpha_{2}}_{\gamma_{1}}\big)(t_{2})
     \int_{0}^{1}\!\!\! \int_{0}^{\tau_{1}}
      d\big(R_{k}x^{\beta_{1}}_{\gamma_{2}}\big)(\tau_{1})
       d\big(R_{k}x^{\beta_{2}}_{\gamma_{2}}\big)(\tau_{2})
        \right]  \\[0.1cm]
 & = \lim_{\stackrel{\scriptstyle n_{1},n_{2} \to \infty}
   {m_{1},m_{2} \to \infty}}\,
  \sum_{s_{1}=0\vphantom{(s_{1})}}^{n_{1}-1}
   \sum_{s_{2}(s_{1})=0}^{n_{2}-1}
   \sum_{s_{1}=0\vphantom{(s_{1})}}^{m_{1}-1}
    \sum_{s_{2}(s_{1})=0}^{m_{2}-1}
     \\[0.1cm]
 & \ \quad E\Big[
  \Big(\big(R_{k}x^{\alpha_{1}}_{\gamma_{1}}\big)
   \big(t^{s_{1}+1}_{1}\big)
   - \big(R_{k}x^{\alpha_{1}}_{\gamma_{1}}\big)
   \big(t^{s_{1}}_{1}\big)\Big)
  \Big(\big(R_{k}x^{\alpha_{2}}_{\gamma_{1}}\big)
   \big(t^{s_{2}(s_{1})+1}_{2}\big)
   - \big(R_{k}x^{\alpha_{2}}_{\gamma_{1}}\big)
    \big(t^{s_{2}(s_{1})}_{2}\big)\Big)  \\[0.1cm]
 & \qquad \ \cdot
  \Big(\big(R_{k}x^{\beta_{1}}_{\gamma_{2}}\big)
   \big(\tau^{s_{1}+1}_{1}\big)
   - \big(R_{k}x^{\beta_{1}}_{\gamma_{2}}\big)
    \big(\tau^{s_{1}}_1\big)\Big)
  \Big(\big(R_{k}x^{\beta_{2}}_{\gamma_{2}}\big)
   \big(\tau^{s_{2}(s_{1})+1}_2\big)
    - \big(R_{k}x^{\beta_{2}}_{\gamma_{2}}\big)
     \big(\tau^{s_{2}(s_{1})}_2\big)\Big) \Big].
  \end{aligned}
\end{equation}
Here we set for $i = 1, 2$,
\begin{equation*}
 t^{s_{i}(s_{i-1})}_{i} =
  \begin{cases}
   0 & \textrm{if}  \quad  s_{i}(s_{i-1}) = 0,  \\[0.1cm]
   t^{s_{i}(s_{i-1})-1}_{i} + 
    {t^{s_{i-1}(s_{i-2})}_{i-1}}/2^{n_{i}}_{\phantom{0}}
   & \textrm{if} \quad  s_{i}(s_{i-1}) \geq 1, 
 \end{cases}
\end{equation*}
and
\begin{equation*}
 \tau^{s_{i}(s_{i-1})}_{i} =
 \begin{cases}
  0 & \textrm{if}  \quad  s_{i}(s_{i-1}) = 0,  \\[0.1cm]
  \tau^{s_{i}(s_{i-1})-1}_{i} + 
   {\tau^{s_{i-1}(s_{i-2})}_{i-1}}/2^{m_{i}}_{\phantom{0}}
  & \textrm{if} \quad  s_{i}(s_{i-1}) \geq 1, 
 \end{cases}
\end{equation*}
where $s_{i}(s_{i-1})$ are non-negative integers and we use 
the convention such that 
$s_{1}(s_{0}) = s_{1}$, $s_{0}(s_{-1}) = 1$
and $t_{0}^{1} = \tau_{0}^{1} = 1$.

Writing for brevity
\begin{equation*}
 \boldsymbol{j}_{i} =
 \begin{cases}
  \big(R_{k}x^{\alpha_{i}}_{\gamma_{1}}\big)
   \big(t^{s_{i}(s_{i-1})+1}_i\big)
     - \big(R_{k}x^{\alpha_{i}}_{\gamma_{1}}\big)
      \big(t^{s_{i}(s_{i-1})}_i\big)
   &  \textrm{if}  \quad  i \leq 2,  \\[0.2cm]
    \big(R_{k}x^{\beta_{i-2}}_{\gamma_{2}}\big)
     \big(\tau^{s_{i-2}(s_{i-3})+1}_{i-2}\big)
      - \big(R_{k}x^{\beta_{i-2}}_{\gamma_{2}}\big)
       \big(\tau^{s_{i-2}(s_{i-3})}_{i-2}\big) 
   &  \textrm{if}  \quad  i > 2,
 \end{cases}
\end{equation*}
we see from Lemma \ref{lem:5} that the right side 
of \eqref{eqn:2} is equal to
\begin{align*}
 & \lim_{\stackrel{\scriptstyle n_{1},n_{2} \to \infty}
  {m_{1},m_{2} \to \infty}} \,
   \sum_{{s_{1}=0}\vphantom{(s_{1})}}^{2^{n_{1}}-1}
    \sum_{s_{2}(s_{1})=0}^{2^{n_{2}}-1}
     \sum_{{s_{1}=0}\vphantom{(s_{1})}}^{2^{m_{1}}-1}
      \sum_{s_{2}(s_{1})=0}^{2^{m_{2}}-1}\ 
   \frac{1}{2!2^{2}}
    \sum_{\sigma\in {\mathfrak S}_{4}\vphantom{(s_{1})}}
    E\big[\boldsymbol{j}_{\sigma(1)}\boldsymbol{j}_{\sigma(2)}\big]
     E\big[\boldsymbol{j}_{\sigma(3)}\boldsymbol{j}_{\sigma(4)}\big]
   \\[0.2cm]
  & = \lim_{\stackrel{\scriptstyle n_{1},n_{2} \to \infty}
   {m_{1},m_{2} \to \infty}}
    \sum_{{s_{1}=0}\vphantom{(s_{1})}}^{2^{n_{1}}-1}
     \sum_{s_{2}(s_{1})=0}^{2^{n_{2}}-1}
      \sum_{{s_{1}=0}\vphantom{(s_{1})}}^{2^{m_{1}}-1}
       \sum_{s_{2}(s_{1})=0}^{2^{m_{2}}-1}
    \sum_{\sigma\in {\mathfrak S}_{2}\vphantom{(s_{1})}}\,
       E\big[\boldsymbol{j}_{1}\boldsymbol{j}_{\sigma(1)+2}\big] 
        E\big[\boldsymbol{j}_{2}\boldsymbol{j}_{\sigma(2)+2}\big]
     + T_{\textrm{self}}  \\[0.2cm]
  & = \lim_{\stackrel{\scriptstyle n_{1},n_{2} \to \infty}
   {m_{1},m_{2} \to \infty}}
    \sum_{{s_{1}=0}\vphantom{(s_{1})}}^{2^{n_{1}}-1}
     \sum_{s_{2}(s_{1})=0}^{2^{n_{2}}-1}
      \sum_{{s_{1}=0}\vphantom{(s_{1})}}^{2^{m_{1}}-1}
       \sum_{s_{2}(s_{1})=0}^{2^{m_{2}}-1}
    \sum_{\sigma \in {\mathfrak S}_{2}\vphantom{(s_{1})}}
        \\[0.2cm]
  &  \hspace{1cm}
   E\Big[ \Big(\big(R_{k}x^{\alpha_{1}}_{\gamma_{1}}\big)
    \big(t^{s_{1}+1}_1\big) - 
     \big(R_{k}x^{\alpha_{1}}_{\gamma_{1}}\big)
      \big(t^{s_{1}}_1\big)\Big)  \\
  & \hspace{1.5cm} \ \cdot
   \Big(\big(R_{k}x^{\beta_{\sigma(1)}}_{\gamma_{2}}\big)
    \big(\tau_{\sigma(1)}^
     {s_{\sigma(1)}(s_{\sigma(1)-1})+1}\big)
    - \big(R_{k}x^{\beta_{\sigma(1)}}_{\gamma_{2}}\big)
     \big(\tau_{\sigma(1)}^
      {s_{\sigma(1)}(s_{\sigma(1)-1})}\big)\Big)
    \Big]  \\[0.2cm] 
  & \hspace{1cm}
   \times E\Big[ \Big(\big(R_{k}x^{\alpha_{2}}_{\gamma_{1}}\big)
    \big(t^{s_{2}(s_{1})+1}_2\big) - 
     \big(R_{k}x^{\alpha_{2}}_{\gamma_{1}}\big)
      \big(t^{s_{2}(s_{1})}_2\big)\Big)  \\
  & \hspace{2cm} \ \cdot
   \Big(\big(R_{k}x^{\beta_{\sigma(2)}}_{\gamma_{2}}\big)
     \big(\tau_{\sigma(2)}^
      {s_{\sigma(2)}(s_{\sigma(2)-1})+1}\big)
      - \big(R_{k}x^{\beta_{\sigma(2)}}_{\gamma_{2}}\big)
       \big(\tau_{\sigma(2)}^
       {s_{\sigma(2)}(s_{\sigma(2)-1})} \big)\Big)
    \Big] \\[0.2cm]
  & \hspace{1cm} + T_{\textrm{self}},
\end{align*}
where $T_{\textrm{self}}$ stands for the collection of 
self-linking terms containing 
\[
  E\Big[ \Big(\big(R_{k}x^{\alpha_{1}}_{\gamma_{1}}\big)
   (t^{l+1}_1) - 
    \big(R_{k}x^{\alpha_{1}}_{\gamma_{1}}\big)
     (t^{l}_1)\Big)
      \Big(\big(R_{k}x^{\alpha_{2}}_{\gamma_{1}}\big)
       (t_{2}^{l+1}) - 
        \big(R_{k}x^{\alpha_{2}}_{\gamma_{1}}\big)
         (t_{2}^{l})\Big) \Big] 
\]
or
\[
 \ \  E\Big[ \Big(\big(R_{k}x^{\beta_{1}}_{\gamma_{2}}\big)
  (\tau^{l+1}_1) - 
   \big(R_{k}x^{\beta_{1}}_{\gamma_{2}}\big)
    (\tau^{l}_1)\Big)
     \Big(\big(R_{k}x^{\beta_{2}}_{\gamma_{2}}\big)
      (\tau_{2}^{l+1}) - 
       \big(R_{k}x^{\beta_{2}}_{\gamma_{2}}\big)
        (\tau_{2}^{l})\Big) \Big]. 
\]
Since  $R_{k}x^{\alpha}_{\gamma_{i}}(t)$
and $R_{k}x^{\beta}_{\gamma_{j}} (t)$
are independent if $\alpha \not= \beta$, 
we then have 
\begin{align*}
 & E\Big[ \Big(\big(R_{k}x^{\alpha_{1}}_{\gamma_{1}}\big)
    \big(t^{s_{1}+1}_1\big) - 
     \big(R_{k}x^{\alpha_{1}}_{\gamma_{1}}\big)
      \big(t^{s_{1}}_1\big)\Big)  \\
 & \hspace{0.5cm} \ \cdot 
    \Big(\big(R_{k}x^{\beta_{\sigma(1)}}_{\gamma_{2}}\big)
     \big(\tau_{\sigma(1)}
      ^{s_{\sigma(1)}(s_{\sigma(1)-1})+1}\big) - 
       \big(R_{k}x^{\beta_{\sigma(1)}}_{\gamma_{2}}\big)
        \big(\tau_{\sigma(1)}
         ^{s_{\sigma(1)}(s_{\sigma(1)-1})}\big)\Big)
          \Big] \\[0.1cm]
 & \times 
  E\Big[ \Big(\big(R_{k}x^{\alpha_{2}}_{\gamma_{1}}\big)
    \big(t^{s_{2}(s_{1})+1}_2\big) - 
     \big(R_{k}x^{\alpha_{2}}_{\gamma_{1}}\big)
      \big(t^{s_{2}(s_{1})}_2\big)\Big)  \\
 & \hspace{1cm} \ \cdot
   \Big(\big(R_{k}x^{\beta_{\sigma(2)}}_{\gamma_{2}}\big)
    \big(\tau_{\sigma(2)}
     ^{s_{\sigma(2)}(s_{\sigma(2)-1})+1}\big) - 
      \big(R_{k}x^{\beta_{\sigma(2)}}_{\gamma_{2}}\big)
       \big(\tau_{\sigma(2)}
        ^{s_{\sigma(2)}(s_{\sigma(2)-1})}\big)\Big)
     \Big]  \\[0.1cm]
 & \quad = \delta_{\alpha_{1}\beta_{\sigma(1)}}
     E\Big[ \Big(\big(R_{k}x^{\alpha_{1}}_{\gamma_{1}}\big)
      \big(t^{s_{1}+1}_1\big) - 
       \big(R_{k}x^{\alpha_{1}}_{\gamma_{1}}\big)
        \big(t^{s_{1}}_1\big)\Big)  \\
 & \hspace{2.5cm} \cdot
  \Big(\big(R_{k}x^{\beta_{\sigma(1)}}_{\gamma_{2}}\big)
   \big(\tau_{\sigma(1)}
    ^{s_{\sigma(1)}(s_{\sigma(1)-1})+1}\big) - 
     \big(R_{k}x^{\beta_{\sigma(1)}}_{\gamma_{2}}\big)
      \big(\tau_{\sigma(1)}
       ^{s_{\sigma(1)}(s_{\sigma(1)-1})}\big)\Big)
    \Big]   \\[0.1cm]
 & \hspace{0.8cm} \times \delta_{\alpha_{2}\beta_{\sigma(2)}}
   E\Big[ \Big(\big(R_{k}x^{\alpha_{2}}_{\gamma_{1}}\big)
    \big(t^{s_{2}(s_{1})+1}_{2}\big) - 
     \big(R_{k}x^{\alpha_{2}}_{\gamma_{1}}\big)
      \big(t^{s_{2}(s_{1})}_2\big)\Big)  \\
 & \hspace{2.9cm} \cdot
  \Big(\big(R_{k}x^{\beta_{\sigma(2)}}_{\gamma_{2}}\big)
   \big(\tau_{\sigma(2)}
    ^{s_{\sigma(2)}(s_{\sigma(2)-1})+1}\big) - 
     \big(R_{k}x^{\beta_{\sigma(2)}}_{\gamma_{2}}\big)
      \big(\tau_{\sigma(2)}
       ^{s_{\sigma(2)}(s_{\sigma(2)-1})}\big)\Big]
    \\[0.1cm]
 & \quad = 
    E\Big[ \Big(\big(R_{k}x^{\alpha_{1}}_{\gamma_{1}}\big)
     \big(t^{s_{1}+1}_1\big) - 
      \big(R_{k}x^{\alpha_{1}}_{\gamma_{1}}\big)
       \big(t^{s_{1}}_1\big)\Big)  \\
 & \hspace{1.5cm} \cdot
    \Big(\big(R_{k}x^{\alpha_{1}}_{\gamma_{2}}\big)
     \big(\tau_{\sigma(1)}
      ^{s_{\sigma(1)}(s_{\sigma(1)-1})+1}\big) - 
       \big(R_{k}x^{\alpha_{1}}_{\gamma_{2}}\big)
        \big(\tau_{\sigma(1)}
         ^{s_{\sigma(1)}(s_{\sigma(1)-1})}\big)\Big)
      \Big] \\[0.1cm]
 & \hspace{0.8cm} \times
   E\Big[ \Big(\big(R_{k}x^{\alpha_{2}}_{\gamma_{1}}\big)
    \big(t^{s_{2}(s_{1})+1}_2\big) - 
     \big(R_{k}x^{\alpha_{2}}_{\gamma_{1}}\big)
      \big(t^{s_{2}(s_{1})}_2\big)\Big)  \\
 & \hspace{1.9cm} \cdot
  \Big(\big(R_{k}x^{\alpha_{2}}_{\gamma_{2}}\big)
   \big(\tau_{\sigma(2)}
    ^{s_{\sigma(2)}(s_{\sigma(2)-1})+1}\big) - 
     \big(R_{k}x^{\alpha_{2}}_{\gamma_{2}}\big)
      \big(\tau_{\sigma(2)}
       ^{s_{\sigma(2)}(s_{\sigma(2)-1})}\big)\Big) \Big].
\end{align*}
Furthermore, since $R_{k}x_{\gamma_{i}}^{\alpha}(t)$
and $R_{k}x_{\gamma_{i}}^{\beta}(t)$ are identically
distributed if $\alpha \not= \beta$, we obtain  
\begin{equation}\label{eqn:3}
 \begin{aligned}
  \eqref{eqn:2} &
   = \int_{0}^{1}\!\! \int_{0}^{t_{1}}\!\!\!
    \int_{0}^{1}\!\! \int_{0}^{\tau_{1}} \!
     \sum_{\sigma \in {\mathfrak S}_{2}}  
      dE\big[\big(R_{k}x^{\alpha_{1}}_{\gamma_{1}}\big)(t_{1})
       \big(R_{k}x^{\alpha_{1}}_{\gamma_{2}}\big)
        (\tau_{\sigma(1)})\big]  \\
  & \hspace{3.7cm} \cdot
   dE\big[\big(R_{k}x^{\alpha_{2}}_{\gamma_{1}}\big)(t_{2})
    \big(R_{k}x^{\alpha_{2}}_{\gamma_{2}}\big)
     (\tau_{\sigma(2)})\big]  \\
  & \quad + T_{\textrm{self}}  \\[0.3cm]
  & =  \int_{0}^{1}\!\! \int_{0}^{t_{1}}\!\!\!
    \int_{0}^{1}\!\! \int_{0}^{\tau_{1}} \!
     \sum_{\sigma \in {\mathfrak S}_{2}}  
      dE\big[\big(R_{k}x^{\alpha}_{\gamma_{1}}\big)(t_{1})
       \big(R_{k}x^{\alpha}_{\gamma_{2}}\big)
        (\tau_{\sigma(1)})\big]  \\
  & \hspace{3.7cm} \cdot
   dE\big[\big(R_{k}x^{\alpha}_{\gamma_{1}}\big)(t_{2})
    \big(R_{k}x^{\alpha}_{\gamma_{2}}\big)
     (\tau_{\sigma(2)})\big]  \\
  & \quad + T_{\textrm{self}}. 
 \end{aligned}
\end{equation}
Consequently, \eqref{eqn:1}, \eqref{eqn:2} and 
\eqref{eqn:3} yield for each $\alpha = 1, 2, 3$ that
\begin{equation}\label{eqn:4.1}
 \begin{aligned}
  {} & E\Bigg[ \prod_{j=1}^{2}\operatorname{Tr}_{R}
   W^{\epsilon, 2}_{\gamma_{j}}(R_{k}x) \Bigg]
    = \operatorname{Tr}\sum_{\alpha_{1},\alpha_{2}=1}^{3}
     E_{\alpha_{1}}E_{\alpha_{2}} \otimes 
      E_{\alpha_{1}}E_{\alpha_{2}}  \\[0.2cm]
  & \  \times \int_{0}^{1}\!\! \int_{0}^{t_{1}}\!\!\!
      \int_{0}^{1}\!\! \int_{0}^{\tau_{1}}
   \! \sum_{\sigma \in {\mathfrak S}_{2}}
   dE\big[\big(R_{k}x^{\alpha}_{\gamma_{1}}\big)(t_{1})
    \big(R_{k}x^{\alpha}_{\gamma_{2}}\big)
     (\tau_{\sigma(1)})\big]
      dE\big[\big(R_{k}x^{\alpha}_{\gamma_{1}}\big)(t_{2})
       \big(R_{k}x^{\alpha}_{\gamma_{2}})
        (\tau_{\sigma(2)})\big]  \\[0.1cm]
  & \  + T_{\textrm{self}}.  
 \end{aligned}
\end{equation}

Now, noting that
\begin{align*}
 & \int_{0}^{1}\!\! \int_{0}^{\tau_{1}}
  \sum_{\sigma \in {\mathfrak S}_{2}}
   dE\big[\big(R_{k}x^{\alpha}_{\gamma_{1}}\big)(t_{1})
    \big(R_{k}x^{\alpha}_{\gamma_{2}}\big)
     (\tau_{\sigma(1)})\big]
   dE\big[\big(R_{k}x^{\alpha}_{\gamma_{1}}\big)(t_{2})
    \big(R_{k}x^{\alpha}_{\gamma_{2}}\big)
     (\tau_{\sigma(2)})\big]  \\
 & \quad = \int_{0}^{1}\!\! \int_{0}^{1} 
   dE\big[\big(R_{k}x^{\alpha}_{\gamma_{1}}\big)(t_{1})
    \big(R_{k}x^{\alpha}_{\gamma_{2}}\big)
     (\tau_{1})\big]
   dE\big[\big(R_{k}x^{\alpha}_{\gamma_{1}}\big)(t_{2})
    \big(R_{k}x^{\alpha}_{\gamma_{2}}\big)
     (\tau_{2})\big]
\end{align*}
and 
\begin{align*}
 & \int_{0}^{1}\!\! \int_{0}^{t_{1}}\!\!\! 
  \int_{0}^{1}\!\! \int_{0}^{1} 
   dE\big[\big(R_{k}x^{\alpha}_{\gamma_{1}}\big)(t_{1})
    \big(R_{k}x^{\alpha}_{\gamma_{2}}\big)
     (\tau_{1})\big]
   dE\big[\big(R_{k}x^{\alpha}_{\gamma_{1}}\big)(t_{2})
    \big(R_{k}x^{\alpha}_{\gamma_{2}}\big)
     (\tau_{2})\big] \\[0.1cm]
 & \quad = \int_{0}^{1}\!\! \int_{0}^{t_{1}}\!\!\!
  \int_{0}^{1}\!\! \int_{0}^{1}
   dE\big[\big(R_{k}x^{\alpha}_{\gamma_{1}}\big)(t_{1})
    \big(R_{k}x^{\alpha}_{\gamma_{2}}\big)
     (\tau_{2})\big]
   dE\big[\big(R_{k}x^{\alpha}_{\gamma_{1}}\big)(t_{2})
    \big(R_{k}x^{\alpha}_{\gamma_{2}}\big)
     (\tau_{1})\big],
\end{align*}
we see from \eqref{eqn:4.1} that
\begin{align*}
 E & \Bigg[ \prod_{j=1}^{2}\operatorname{Tr}_{R}
   W^{\epsilon, 2}_{\gamma_{j}}(R_{k}x) \Bigg]
 ¡¡¡¡¡¡= \operatorname{Tr}\Bigg(
  \sum_{\alpha_{1}=1}^{3} E_{\alpha_{1}} \otimes
   E_{\alpha_{1}} \Bigg)^{2}  \\
 & \quad \times \frac{1}{2!}
  \int_{0}^{1}\!\! \int_{0}^{1}\!\!
   \int_{0}^{1}\!\! \int_{0}^{1} 
    dE\big[\big(R_{k}x^{\alpha}_{\gamma_{1}}\big)(t_{1})
     \big(R_{k}x^{\alpha}_{\gamma_{2}}\big)
      (\tau_{1})\big]
     dE\big[\big(R_{k}x^{\alpha}_{\gamma_{1}}\big)(t_{2})
      \big(R_{k}x^{\alpha}_{\gamma_{2}}\big)
       (\tau_{2})\big]  \\[0.1cm]
 & \quad  + T_{\textrm{self}}  \\[0.1cm] 
 & = \operatorname{Tr}\Bigg(
  \sum_{\alpha_{1}=1}^{3} E_{\alpha_{1}} \otimes
   E_{\alpha_{1}} \Bigg)^{2}\, \frac{1}{2!}
   E\big[\big(R_{k}x^{\alpha}_{\gamma_{1}}\big)(1)
    \big(R_{k}x^{\alpha}_{\gamma_{2}}\big)(1)\big]^{2}
     + T_{\textrm{self}}.
\end{align*}
On the other hand, it follows from \eqref{eqn:2.5}, \eqref{eqn:2.10} 
and \eqref{eqn:3.7} that
\begin{align*}
 & E\big[ \big(R_{k}x^{\alpha}_{\gamma_{1}}\big)(1)
  \big(R_{k}x^{\alpha}_{\gamma_{2}}\big)(1) \big]
    \\[0.2cm]
 &  \quad  = E\big[ \big\langle x, \, R_{k}
  \tilde{C}^{\epsilon}_{\gamma_1}(1)^{\alpha}
   \otimes E_{\alpha} \big\rangle \big\langle x, \, R_{k}
    \tilde{C}^{\epsilon}_{\gamma_2}(1)^{\alpha}
     \otimes E_{\alpha} \big\rangle \big] 
    \\[0.1cm]
 &  \quad  = \Big(R_{k}
  \tilde{C}^{\epsilon}_{\gamma_1}(1)^{\alpha}
   \otimes E_{\alpha},
    \ R_{k}\tilde{C}^{\epsilon}_{\gamma_2}(1)^{\alpha}
     \otimes E_{\alpha} \Big)_{p}  \\
 &  \quad  =  \Big(R_{k}(
  \tilde{C}^{\epsilon}_{\gamma_1}(1)^{\alpha}
   \otimes E_{\alpha}, \, 0),
    \ \left(1 + Q_{0}^{2}\right)^{p}R_{k}
     (\tilde{C}^{\epsilon}_{\gamma_2}(1)^{\alpha}
      \otimes E_{\alpha}, \, 0)\Big)_{+}  \\
 & \quad = - \frac{1}{2\sqrt{-1}k}
  \Big((C^{\epsilon}_{\gamma_1}(1)^{\alpha}
   \otimes E_{\alpha}, \, 0),
    \  Q_{0}^{-1}(C^{\epsilon}_{\gamma_2}(1)^{\alpha}
     \otimes E_{\alpha}, \, 0)
      \Big)_{+}  \\
 & \quad = - \frac{1}{2\sqrt{-1}k}
  \left(C^{\epsilon}_{\gamma_1}(1)^{\alpha}
   \otimes E_{\alpha},
    \, \omega_{2}^{\alpha} \otimes E_{\alpha} \right),
\end{align*}
where 
\[
 \omega_{2} = \mbox{$1$-form part of $Q_{0}^{-1}
  (C^{\epsilon}_{\gamma_2}(1), \, 0)$}.
\]

Recall that, as seen in Proposition~\ref{prop:3}, 
$\ast C^{\epsilon}_{\gamma_2}(1)^{\alpha}$ is a representative
of the compact Poincar\'{e} dual of $\gamma_{2}$ extended 
by zero to all of $S^{3}$, and the second de Rham cohomology
$H_{DR}^{2}(S^{3}) = \{0\}$, 
so that we have
$d\omega_{2}^{\alpha} =
 \ast C^{\epsilon}_{\gamma_2}(1)^{\alpha}$,
since $\ast C_{\gamma_{2}}^{\epsilon}(1)^{\alpha}$
is closed and exact.
Hence, for each $\alpha = 1, 2, 3$,
\[
 \left(C^{\epsilon}_{\gamma_1}(1)^{\alpha},
  \, \omega_{2}^{\alpha} \right)
\]
yields the linking number $L(\gamma_{1}, \gamma_{2})$ 
of loops $\gamma_{1}$ and $\gamma_{2}$, 
provided that $\epsilon$ is sufficiently 
small so that the $\epsilon$-tubular neighborhoods of 
$\gamma_{j}$ are not intersected
(see \cite{Bott-Tu} for details).
Also, by investigating deformed Wilson loops,
it has been proved by Hahn~\cite{Hahn} that 
$T_{\textrm{self}} = 0$ for non-self-intersected links.
\medskip

\textit{Step $2$}.  We proceed to evaluate $m$-th order 
coefficients of the expansion, that is, 
\begin{equation}\label{eqn:1d}
 E\big[\operatorname{Tr}_{R}
  W^{\epsilon,m_{1}}_{\gamma_{1}}(R_{k}x)
   \operatorname{Tr}_{R}
    W^{\epsilon,m_{2}}_{\gamma_{2}}(R_{k}x)\big],
\end{equation}
where $m = m_{1} + m_{2}$.
Note that if $m$ is odd, 
then \eqref{eqn:1d} is equal to zero. 
Even if $m$ is even, when $m_{1} \not= m_{2}$,
the term \eqref{eqn:1d} belongs to $T_{\textrm{self}}$,
where $T_{\textrm{self}}$ denotes the collection of 
self-linking terms containing the limits of 
\[
  E\Big[ \cdots \Big(
   \big(R_{k}x^{\alpha_{1}}_{\gamma_{1}}\big)(t^{l+1}_1) - 
    \big(R_{k}x^{\alpha_{1}}_{\gamma_{1}}\big)
     (t^{l}_1)\Big)
   \Big(\big(R_{k}x^{\alpha_{2}}_{\gamma_{1}}\big)
    (t_{2}^{l^{\prime}+1}) - 
     \big(R_{k}x^{\alpha_{2}}_{\gamma_{1}}\big)
      (t_{2}^{l^{\prime}})\Big) \Big] 
\]
or
\[
  \quad E\Big[ \cdots \Big(
   \big(R_{k}x^{\beta_{1}}_{\gamma_{2}}\big)
    (\tau^{l+1}_1)\Big) - 
   \big(R_{k}x^{\beta_{1}}_{\gamma_{2}}\big)
    (\tau^{l}_1)\Big)
  \Big(\big(R_{k}x^{\beta_{2}}_{\gamma_{2}}\big)
   (\tau_{2}^{l^{\prime}+1}) - 
    \big(R_{k}x^{\beta_{2}}_{\gamma_{2}}\big)
     (\tau_{2}^{l^{\prime}})\Big) \Big]
\]
as 
$\big|t^{l+1}_j - t^{l}_j\big|,\,
\big|\tau^{l^{\prime}+1}_{j^{\prime}} - 
\tau^{l^{\prime}}_{j^{\prime}}\big| \to 0$.
Hence it suffices to evaluate the case 
with $m_{1} = m_{2}$.
\smallskip

Consequently, \eqref{eqn:1d} is equal to
\begin{equation}\label{eqn:2d}
 \begin{aligned}
  {} & E\big[\operatorname{Tr}_{R}
   W^{\epsilon, m_{1}}_{\gamma_{1}}(R_{k}x)
    \otimes W^{\epsilon, m_{2}}_{\gamma_{2}}
     (R_{k}x) \big]  \\[0.1cm]
 & \quad = 
  \sum_{\alpha_{1},\alpha_{2}, \dots, \alpha_{m_{1}}=1}^{3}
   \, \sum_{\beta_{1},\beta_{2}, \dots, \beta_{m_{1}}=1}^{3}
    \operatorname{Tr} E_{\alpha_{1}}E_{\alpha_{2}}
     \cdots E_{\alpha_{m_{1}}} \otimes 
      E_{\beta_{1}}E_{\beta_{2}} \cdots E_{\beta_{m_{1}}}
    \\[0.1cm]
 & \hspace{1cm}  \times E\left[ 
  \int_{0}^{1}\!\! \int_{0}^{t_{1}} \cdots 
   \int_{0}^{t_{m_{1}-1}}\!\!\! \int_{0}^{1}\!\!
    \int_{0}^{\tau_{1}} \cdots \int_{0}^{\tau_{m_{1}-1}}
     d\big(R_{k}x^{\alpha_{1}}_{\gamma_{1}}\big)(t_{1})
      d\big(R_{k}x^{\alpha_{2}}_{\gamma_{1}}\big)(t_{2})
       \cdots \right.  \\[0.1cm]
  & \hspace{1.8cm} \left. \phantom{\int_0^1} \cdot
   d\big(R_{k}x^{\alpha_{m_{1}}}_{\gamma_{1}}\big)(t_{m_{1}}) 
    d\big(R_{k}x^{\beta_{1}}_{\gamma_{2}}\big)(\tau_{1})
     d\big(R_{k}x^{\beta_{2}}_{\gamma_{2}}\big)(\tau_{2})
      \cdots d\big(R_{k}x^{\beta_{m_{1}}}_{\gamma_{2}}\big)
       (\tau_{m_{1}}) \right]  \\
  & \hspace{1cm} + T_{\textrm{self}}.
\end{aligned}
\end{equation}
Then writing for brevity 
\begin{equation*}
\boldsymbol{j}_{i} =
 \begin{cases}
  \big(R_{k}x^{\alpha_{i}}_{\gamma_{1}}\big)(t_i)
   & \mbox{if \quad  $i \leq m_{1}$},  \\[0.2cm]
  \big(R_{k}x^{\beta_{i-m_{1}}}_{\gamma_{2}}\big)
   (\tau_{i-m_{1}})  &  \mbox{if \quad  $i > m_{1}$},
 \end{cases}
\end{equation*}
we obtain, in a manner similar to the derivation of
\eqref{eqn:2}, that
\begin{equation}\label{eqn:4.2}
 \begin{aligned}
  {} & E\left[ 
  \int_{0}^{1}\!\! \int_{0}^{t_{1}} \cdots 
   \int_{0}^{t_{m_{1}-1}}\!\!\! \int_{0}^{1}\!\!
    \int_{0}^{\tau_{1}} \cdots \int_{0}^{\tau_{m_{1}-1}}
     d\big(R_{k}x^{\alpha_{1}}_{\gamma_{1}}\big)(t_{1})
      d\big(R_{k}x^{\alpha_{2}}_{\gamma_{1}}\big)(t_{2})
       \cdots \right. \\[0.1cm]
  & \hspace{0.3cm} \left. \phantom{\int_0^1} \cdot
   d\big(R_{k}x^{\alpha_{m_{1}}}_{\gamma_{1}}\big)(t_{m_{1}}) 
    d\big(R_{k}x^{\beta_{1}}_{\gamma_{2}}\big)(\tau_{1})
     d\big(R_{k}x^{\beta_{2}}_{\gamma_{2}}\big)(\tau_{2})
      \cdots d\big(R_{k}x^{\beta_{m_{1}}}_{\gamma_{2}}\big)
       (\tau_{m_{1}}) \right]  \\[0.1cm]
  & \quad = \int_{0}^{1}\!\! \int_{0}^{t_{1}} \cdots 
   \int_{0}^{t_{m_{1}-1}}\!\!\! \int_{0}^{1}\!\!
    \int_{0}^{\tau_{1}} \cdots \int_{0}^{\tau_{m_{2}-1}}
     \frac{1}{m_{1}!\, 2^{m_{1}}}
      \sum_{\sigma\in {\mathfrak S}_{m}}
       dE\big[\boldsymbol{j}_{\sigma(1)}
        \boldsymbol{j}_{\sigma(2)}\big]  \\
   & \hspace{1.3cm} \cdot
    dE\big[\boldsymbol{j}_{\sigma(3)}
     \boldsymbol{j}_{\sigma(4)}\big]
      \cdots dE\big[\boldsymbol{j}_{\sigma(m-1)}
       \boldsymbol{j}_{\sigma(m)}\big].
 \end{aligned}
\end{equation}
Since in the right side of \eqref{eqn:4.2} those
terms having $\sigma(i-1)$ and $\sigma(i)$ 
both in $\{1,2,\dots,m_{1}\}$ or 
$\{m_{1}+1,m_{1}+2,\dots,m_{2}\}$ belong to 
$T_{\textrm{self}}$, it follows that
\begin{align*}
 \eqref{eqn:4.2} & =
  \int_{0}^{1}\!\! \int_{0}^{t_{1}} \cdots 
   \int_{0}^{t_{{m_{1}-1}}}\!\!\! \int_{0}^{1}\!\!
    \int_{0}^{\tau_{1}} \cdots \int_{0}^{\tau_{m_{1}-1}}
     \sum_{\sigma\in {\mathfrak S}_{m_{1}}}
      dE\big[\boldsymbol{j}_{1}
       \boldsymbol{j}_{m_{1}+\sigma(1)}\big]
        dE\big[\boldsymbol{j}_{2}
         \boldsymbol{j}_{m_{1}+\sigma(2)}\big]  \\
 & \hspace{1cm} \cdots 
  dE\big[\boldsymbol{j}_{m_{1}}
   \boldsymbol{j}_{m_{1}+\sigma(m_{1})}\big]
   + T_{\textrm{self}} \\[0.1cm]
 & = \int_{0}^{1}\!\! \int_{0}^{t_{1}} \cdots 
  \int_{0}^{t_{m_{1}-1}}\!\!\! \int_{0}^{1}\!\!
   \int_{0}^{\tau_{1}} \cdots \int_{0}^{\tau_{m_{1}-1}}
    \sum_{\sigma\in {\mathfrak S}_{m_{1}}}
     dE\big[\big(R_{k} x^{\alpha_{1}}_{\gamma_{1}}\big)(t_1)
      \big(R_{k} x^{\beta_{\sigma(1)}}_{\gamma_{2}}\big)
       (\tau_{\sigma(1)})\big] \\
 & \hspace{1cm} \cdot dE\big[
  \big(R_{k} x^{\alpha_{2}}_{\gamma_{1}}\big)(t_2)
   \big(R_{k} x^{\beta_{\sigma(2)}}_{\gamma_{2}}\big)
    (\tau_{\sigma(2)})\big] \cdots
     dE\big[\big(R_{k} x^{\alpha_{m_{1}}}_{\gamma_{1}}\big)
      (t_{m_{1}})
       \big(R_{k} x^{\beta_{\sigma(m_{1})}}_{\gamma_{2}}\big)
        (\tau_{\sigma(m_{1})})\big]  \\[0.2cm]
  & \quad + T_{\textrm{self}}.
\end{align*}
Again, since  
$\big(R_{k}x^{\alpha}_{\gamma_{1}}\big)(t_{1})$
and 
$\big(R_{k}x^{\beta}_{\gamma_{1}}\big)(t_{1})$
are independent and identically distributed if 
$\alpha \not= \beta$,
we have 
\begin{align*}
 {}  &  E\big[\big(R_{k}x^{\alpha_{j}}_{\gamma_{1}}\big)(t_j)
  \big(R_{k}x^{\beta_{\sigma(j)}}_{\gamma_{2}}\big)
   (\tau_{\sigma(j)})\big]
 = \delta_{\alpha_{j}\beta_{\sigma(j)}}
   E\big[\big(R_{k}x^{\alpha_{j}}_{\gamma_{1}}\big)(t_j)
    \big(R_{k}x^{\alpha_{j}}_{\gamma_{2}}\big)
     (\tau_{\sigma(j)})\big]  \\
 & \quad  = \delta_{\alpha_{j}\beta_{\sigma(j)}}
        E\big[\big(R_{k}x^{\alpha}_{\gamma_{1}}\big)(t_j)
         \big(R_{k}x^{\alpha}_{\gamma_{2}}\big)
          (\tau_{\sigma(j)})\big]
\end{align*}
from which we see that the right side of \eqref{eqn:4.2}
is equal to
\begin{equation}\label{eqn:4.3}
 \begin{aligned}
  {} & \int_{0}^{1}\!\! \int_{0}^{t_{1}} \cdots 
    \int_{0}^{t_{m_{1}-1}}\!\!\! \int_{0}^{1}\!\!
     \int_{0}^{\tau_{1}} \cdots \int_{0}^{\tau_{m_{1}-1}}
  \sum_{\sigma\in {\mathfrak S}_{m_{1}}}
   \prod_{j=1}^{m_{1}} \delta_{\alpha_{j}\beta_{\sigma(j)}}
    dE\big[\big(R_{k} x^{\alpha}_{\gamma_{1}}\big)(t_1)
     \big(R_{k} x^{\alpha}_{\gamma_{2}}\big)
      (\tau_{\sigma(1)})\big] \\[0.1cm]
 & \qquad \cdot dE\big[\big(R_{k} x^{\alpha}_{\gamma_{1}}
  \big)(t_2)\big(R_{k} x^{\alpha}_{\gamma_{2}}\big)
   (\tau_{\sigma(2)})\big] \cdots 
    dE\big[\big(R_{k} x^{\alpha}_{\gamma_{1}}\big)
     (t_{m_{1}}) \big(R_{k} x^{\alpha}_{\gamma_{2}}\big)
      (\tau_{\sigma(m_{1})})\big]  \\[0.2cm]
  & \quad  + T_{\textrm{self}}.
 \end{aligned}
\end{equation}
It then follows from \eqref{eqn:2d}, \eqref{eqn:4.2}
and \eqref{eqn:4.3} that 
\begin{equation}\label{eqn:4.4}
 \begin{aligned}
  E & \big[ \operatorname{Tr}_{R}
   W^{\epsilon ,m_{1}}_{\gamma_{1}}(R_{k}x)
    \operatorname{Tr}_{R} 
     W^{\epsilon ,m_{2}}_{\gamma_{2}}(R_{k}x) \big]  \\[0.1cm]
  & = \sum_{\alpha_{1},\alpha_{2},\dots,\alpha_{m_{1}}=1}^{3}
   \operatorname{Tr} E_{\alpha_{1}}E_{\alpha_{2}}
    \cdots E_{\alpha_{m_{1}}} \otimes 
     E_{\alpha_{1}}E_{\alpha_{2}} \cdots 
      E_{\alpha_{m_{1}}}  \\
  & \quad  \times \int_{0}^{1}\!\! \int_{0}^{t_{1}}
   \cdots \int_{0}^{t_{m_{1}-1}}\!\!\! \int_{0}^{1}\!\!
    \int_{0}^{\tau_{1}} \cdots \int_{0}^{\tau_{m_{2}-1}}
   \sum_{\sigma\in {\mathfrak S}_{m_{1}}}
    dE\big[\big(R_{k} x^{\alpha}_{\gamma_{1}}\big)(t_1)
     \big(R_{k} x^{\alpha}_{\gamma_{2}}\big)
      (\tau_{\sigma(1)})\big] \\
  & \hspace{1.5cm} \cdot
    dE\big[\big(R_{k} x^{\alpha}_{\gamma_{1}}\big)
     (t_2)\big(R_{k} x^{\alpha}_{\gamma_{2}}\big)
      (\tau_{\sigma(2)})\big] \cdots
       dE\big[\big(R_{k}x^{\alpha}_{\gamma_{1}}\big)
        (t_{m_{1}})\big(R_{k}x^{\alpha}_{\gamma_{2}}
         \big)(\tau_{\sigma(m_{1})})\big]  \\
  & \quad + T_{\textrm{self}}.  
  \end{aligned}
\end{equation}

Now, noting that
\begin{align*}
 & \int_{0}^{1}\!\! \int_{0}^{\tau_{1}} \cdots
  \int_{0}^{\tau_{m_{1}-1}} 
   \sum_{\sigma \in {\mathfrak S}_{m_{1}}}
    dE\big[\big(R_{k}x^{\alpha}_{\gamma_{1}}\big)
     (t_{1})\big(R_{k}x^{\alpha}_{\gamma_{2}}\big)
      (\tau_{\sigma(1)})\big]  \\
 &  \qquad  \cdot
  dE\left[R_{k}x^{\alpha}_{\gamma_{1}}\big)(t_{2})
   \big(R_{k}x^{\alpha}_{\gamma_{2}}\big)
    (\tau_{\sigma(2)})\right] \cdots
     dE\big[\big(R_{k}x^{\alpha}_{\gamma_{1}}\big)
      (t_{m_{1}})\big(R_{k}x^{\alpha}_{\gamma_{2}}\big)
       (\tau_{\sigma(m_{1})})\big]  \\[0.2cm]
 & \quad = \int_{0}^{1}\!\! \int_{0}^{1} \cdots \int_{0}^{1} 
   dE\big[\big(R_{k}x^{\alpha}_{\gamma_{1}}\big)(t_{1})
    \big(R_{k}x^{\alpha}_{\gamma_{2}}\big)
     (\tau_{1})\big]
      dE\big[\big(R_{k}x^{\alpha}_{\gamma_{1}}\big)
       (t_{2})\big( R_{k}x^{\alpha}_{\gamma_{2}}\big)
        (\tau_{2})\big]  \\
  & \hspace{3.3cm} \cdots 
   dE\big[\big(R_{k}x^{\alpha}_{\gamma_{1}}\big)
    (t_{m_{1}})\big(R_{k}x^{\alpha}_{\gamma_{2}}\big)
     (\tau_{m_{1}})\big] ,
\end{align*}
and for any $\sigma \in {\mathfrak S}_{m_{1}}$  
\begin{align*}
 & \int_{0}^{1}\!\! \int_{0}^{t_{1}} \cdots
  \int_{0}^{t_{m_{1}-1}}\! \!\! \int_{0}^{1}\!\!
   \int_{0}^{1} \cdots \int_{0}^{1} 
    dE\big[R_{k} x^{\alpha}_{\gamma_{1}}\big)(t_{1})
     \big(R_{k} x^{\alpha}_{\gamma_{2}}\big)
      (\tau_{1})\big]  \\[0.1cm]
 &  \hspace{1.5cm} \cdot
  dE\big[\big(R_{k} x^{\alpha}_{\gamma_{1}}\big)(t_{2})
   \big(R_{k} x^{\alpha}_{\gamma_{2}}\big)
    (\tau_{2})\big] \cdots
     dE\big[R_{k} x^{\alpha}_{\gamma_{1}}\big)(t_{m_{1}})
      \big(R_{k} x^{\alpha}_{\gamma_{2}}\big)
       (\tau_{m_{1}})\big]  \\[0.2cm]
 & \quad = \int_{0}^{1}\!\! \int_{0}^{t_{1}} \cdots
   \int_{0}^{t_{m_{1}-1}}\!\!\! \int_{0}^{1}\!\!
    \int_{0}^{1} \cdots \int_{0}^{1} 
     dE\big[R_{k} x^{\alpha}_{\gamma_{1}}\big)
      (t_{\sigma(1)})
       \big(R_{k} x^{\alpha}_{\gamma_{2}}\big)
        (\tau_{1})\big]  \\[0.1cm]
 & \hspace{1.5cm} \cdot 
  dE\big[\big(R_{k} x^{\alpha}_{\gamma_{1}}\big)
   (t_{\sigma(2)})\big(R_{k} x^{\alpha}_{\gamma_{2}}\big)
    (\tau_{2})\big] \cdots
      dE\big[\big(R_{k} x^{\alpha}_{\gamma_{1}}\big)
       (t_{\sigma(m_{1})})
        \big(R_{k} x^{\alpha}_{\gamma_{2}}\big)
         (\tau_{m_{1}})\big],
\end{align*}
we find from $(6.10)$ that for each $\alpha = 1, 2, 3$,
\begin{align*}
 E & \big[ \operatorname{Tr}_{R}
   W^{\epsilon,m_{1}}_{\gamma_{1}}(R_{k}x)
    \operatorname{Tr}_{R} 
     W^{\epsilon,m_{2}}_{\gamma_{2}}(R_{k}x) \big]
   \\[0.1cm]
 & = \operatorname{Tr} \Bigg(
  \sum_{\alpha_{1}=1}^{3} E_{\alpha_{1}}\otimes
   E_{\alpha_{1}} \Bigg)^{m_{1}} \frac{1}{m_{1}!}
  \\[0.2cm]
& \quad \times
  \int_{0}^{1}\!\! \int_{0}^{t_{1}}\!\! \cdots
   \int_{0}^{t_{m_{1}-1}}\!\!\! \int_{0}^{1}\!\! \int_{0}^{1}
    \cdots \int_{0}^{1} 
  \sum_{\sigma \in {\mathfrak S}_{m_{1}}}
   dE\big[\big(R_{k}x^{\alpha}_{\gamma_{1}}\big)
    (t_{\sigma(1)}) \big(R_{k}x^{\alpha}_{\gamma_{2}}\big)
     (\tau_{1})\big]  \\[0.1cm]
 & \hspace{1.2cm} \cdot
  dE\big[\big(R_{k}x^{\alpha}_{\gamma_{1}}\big)
   (t_{\sigma(2)})\big(R_{k}x^{\alpha}_{\gamma_{2}}\big)
    (\tau_{2})\big] \cdots
     dE\big[\big(R_{k}x^{\alpha}_{\gamma_{1}}\big)
      (t_{\sigma(m_{1})})
       \big(R_{k}x^{\alpha}_{\gamma_{2}}\big)
        (\tau_{m_{1}})\big]  \\[0.2cm]
 & \quad + T_{\textrm{self}}  \\[0.2cm] 
 & = \operatorname{Tr} \Bigg(
  \sum_{\alpha_{1}=1}^{3} E_{\alpha_{1}}\otimes
   E_{\alpha_{1}} \Bigg)^{m_{1}} \frac{1}{m_{1}!}
  \\[0.2cm]
& \quad \times
  \int_{0}^{1}\!\! \int_{0}^{1} \cdots \int_{0}^{1}\!\!
   \int_{0}^{1}\!\! \int_{0}^{1} \cdots \int_{0}^{1} 
    dE\big[\big(R_{k}x^{\alpha}_{\gamma_{1}}\big)
     (t_{1})\big(R_{k}x^{\alpha}_{\gamma_{2}}\big)
      (\tau_{1})\big]  \\[0.2cm]
 & \hspace{1.2cm} \cdot
  dE\big[\big(R_{k}x^{\alpha}_{\gamma_{1}}\big)(t_{2})
   \big(R_{k}x^{\alpha}_{\gamma_{2}}\big)
    (\tau_{2})\big] \cdots
     dE\big[\big(R_{k}x^{\alpha}_{\gamma_{1}}\big)
      (t_{m_{1}})\big(R_{k}x^{\alpha}_{\gamma_{2}}\big)
       (\tau_{m_{1}})\big]  \\[0.2cm] 
 & \quad + T_{\textrm{self}}  \\[0.2cm] 
 & = \operatorname{Tr} \Bigg(
  \sum_{\alpha_{1}=1}^{3} E_{\alpha_{1}}\otimes
   E_{\alpha_{1}} \Bigg)^{m_{1}} \frac{1}{m_{1}!}
    E\big[\big(R_{k}x^{\alpha}_{\gamma_{1}}\big)(1)
     \big(R_{k}x^{\alpha}_{\gamma_{2}}\big)(1)\big]^{m_{1}}
      + T_{\textrm{self}}.
\end{align*}

Summing up the above argument together with
Lebesgue's convergence theorem guaranteed by an estimate
similar to that in the proof of $(2)$ of Lemma~\ref{lem:1},
we finally obtain
\begin{align*}
 & I_{CS}(F_{0}^{\epsilon})
  = E \big[F^{\epsilon}_{0}(R_{k}x)\big]
   = E\Bigg[ \prod_{j=1}^{2}
    \operatorname{Tr}_{R} W^{\epsilon}_{\gamma_{j}}
     (R_{k}x)\Bigg]  \\[0.1cm]
  &  \quad  = (\operatorname{Tr} I)^2 + \sum_{n=1}^{\infty}
   \operatorname{Tr} \Bigg(
    \sum_{\alpha_{1}=1}^{3} E_{\alpha_{1}}\otimes
     E_{\alpha_{1}} \Bigg)^{n} \frac{1}{n!}
      E\Big[ \big(R_{k}x^{\alpha}_{\gamma_{1}}\big)(1)
       \big(R_{k} x^{\alpha}_{\gamma_{2}}\big)(1) \Big]^{n}
        + T_{\textrm{self}}.
\end{align*}
\medskip

\textit{Step $3$}.  Now, noting that an orthonormal basis 
of $\mathfrak{su}(2)$ is given by
\begin{equation*}
 E_{1} = \frac{1}{\sqrt{2}}
  \begin{bmatrix}
   \sqrt{-1}  &  0  \\
   0   &  - \sqrt{-1}
 \end{bmatrix},
 \quad 
 E_{2} = \frac{1}{\sqrt{2}}
  \begin{bmatrix}
   0  &  -1  \\
   1  &  0
  \end{bmatrix},
 \quad 
 E_{3} = \frac{1}{\sqrt{2}}
  \begin{bmatrix}
   0  &  \sqrt{-1}\,  \\
   \sqrt{-1}  &  0
  \end{bmatrix},
\end{equation*}
so that 
\begin{gather*}
 E_{1} \otimes E_{1} = \frac{1}{2}
  \begin{bmatrix}
   -1 & 0 & 0 & 0  \\
   0  & 1 & 0 & 0  \\
   0  & 0 & 1 & 0  \\
   0  & 0 & 0 & -1
  \end{bmatrix},
 \quad
 E_{2} \otimes E_{2} = \frac{1}{2}
  \begin{bmatrix}
   0 & 0 & 0 & 1  \\
   0 & 0 & -1 & 0  \\
   0 & -1 & 0 & 0  \\
   1 & 0 & 0 & 0
  \end{bmatrix},  \\[0.1cm]
 E_{3} \otimes E_{3} = \frac{1}{2}
  \begin{bmatrix}
   0 & 0 & 0 & -1  \\
   0 & 0 & -1 & 0  \\
   0 & -1 & 0 & 0  \\
   -1 & 0 & 0 & 0
  \end{bmatrix},
\end{gather*}
we have
\begin{equation*}
 \sum_{\alpha_{1}=1}^{3} 
  E_{\alpha_{1}} \otimes E_{\alpha_{1}}
   = \frac{1}{2}
    \begin{bmatrix}
     -1 & 0 & 0 & 0  \\
      0 & 1 & -2 & 0  \\
      0 & -2 & 1 & 0  \\
      0 & 0 & 0 & -1
    \end{bmatrix}.
\end{equation*}
Since the eigenvalues of
$2\sum  E_{\alpha_{1}} \otimes E_{\alpha_{1}}$
are $-1, -1, -1, 3$, we obtain
\begin{equation*}
 \operatorname{Tr} \Bigg(
    \sum_{\alpha_{1}=1}^{3}
     E_{\alpha_{1}} \otimes E_{\alpha_{1}} \Bigg)^{n}
      = \frac{(-1)^{n} + (-1)^{n} + (-1)^{n}
       + 3^{n}}{2^{n}}.
\end{equation*}
Consequently, we have
\begin{align*}
 &  I_{CS}(F_{0}^{\epsilon}) =
  E\big[ F_{0}^{\epsilon}(R_{k}x) \big]  \\[0.2cm]
 &  \quad  = (\operatorname{Tr} I)^2  + \sum_{n=1}^{\infty}
   \operatorname{Tr} \Bigg(
    \sum_{\alpha_{1}=1}^{3} E_{\alpha_{1}}\otimes
     E_{\alpha_{1}} \Bigg)^{n} \frac{1}{n!}
      E\Big[ \big(R_{k}x^{\alpha}_{\gamma_{1}}\big)(1)
       \big(R_{k} x^{\alpha}_{\gamma_{2}}\big)(1) \Big]^{n}
        + T_{\textrm{self}}   \\[0.1cm]
 & \quad  = 4 + \sum_{n=1}^{\infty}
   \frac{(-1)^{n} + (-1)^{n} + (-1)^{n}
    + 3^{n}}{2^{n}}\frac{1}{n!} \left(- \frac{1}{2\sqrt{-1}k}
     L(\gamma_{1}, \gamma_{2}) \right)^{n} 
      + T_{\textrm{self}}   \\[0.1cm]
 & \quad  = 4 + \sum_{n=1}^{\infty}
   \frac{\sqrt{-1}^{n}\{(-1)^{n} + (-1)^{n} + (-1)^{n}
    + 3^{n}\}}{(4k)^{n}}\frac{1}{n!}
     L(\gamma_{1}, \gamma_{2})^{n} 
      + T_{\textrm{self}}  \\[0.2cm]
 & \quad  = 3e^{- \sqrt{-1}L(\gamma_{1}, \gamma_{2})/{4k}}
    + e^{3\sqrt{-1}L(\gamma_{1}, \gamma_{2})/{4k}}
      + T_{\textrm{self}},
\end{align*}
where 
\[
 L(\gamma_{1}, \gamma_{2}) = 
\mbox{the linking number of loops $\gamma_{1}$ and
$\gamma_{2}$}.
\]

\bigskip\bigskip
\small{
\textit{Acknowledgment}.
During the preparation of the paper the second-named 
author stayed at University of Brest, Technical 
University of Berlin and Chinese University of
Hong Kong.  He would like to thank, in
particular, Professors Paul Baird, Udo Simon, Tom Wan
and Thomas Au for their hospitalities.
}
\bigskip

\bigskip\bigskip
{\small
\noindent Itaru Mitoma \\
Department of Mathematics \\
Saga University \\
840-8502 Saga \\
Japan \\
\textit{E-mail address}: \texttt{mitoma@ms.saga-u.ac.jp}

\bigskip
\noindent Seiki Nishikawa \\
Mathematical Institute \\
Tohoku University \\
980-8578 Sendai \\
Japan \\
\textit{E-mail address}: \texttt{nisikawa@math.tohoku.ac.jp}
}

\end{document}